\def\inter{\mbox{Int}}
\newtheorem{theorem}{Theorem}
\newtheorem{corollary}[theorem]{Corollary}
\newtheorem{lemma}[theorem]{Lemma}
\newcounter{claims}
\newenvironment{claims}{\refstepcounter{claims}\par\medskip\noindent%
{{\bf (\theclaims)}~~}}{\par\medskip}
\newcommand{\claim}[2]{\begin{claims}{\em #2}\label{#1}\end{claims}}
\newcommand{\refclaim}[1]{(\ref{#1})}
\title{$3$-choosability of planar graphs with $(\le\!4)$-cycles far apart}
\author{Zden\v{e}k Dvo\v{r}\'ak\thanks{Department of Applied Mathematics, Faculty of Mathematics and 
           Physics, Charles University, Malostransk\'e n\'am\v{e}st\'{\i}~25, 118~00 Prague, 
           Czech Republic. E-mail: {\tt rakdver@kam.mff.cuni.cz}. 
           Supported by Institute for Theoretical Computer Science~(ITI),
           project 1M0021620808 of Ministry of~Education of~Czech Republic,
	   and by project GA201/09/0197 (Graph colorings and flows: structure and applications)
	   of Czech Science Foundation.}}
\date{}
\begin{document}
\maketitle

\begin{abstract}
A graph is $k$-choosable if it can be colored whenever every vertex
has a list of at least $k$ available colors.  We prove that
if cycles of length at most four in a planar graph $G$ are pairwise far
apart, then $G$ is $3$-choosable.  This is analogous to the problem
of Havel regarding $3$-colorability of planar graphs with triangles
far apart.
\end{abstract}

\section{Introduction}

All graphs considered in this paper are simple and finite. The concepts 
of list coloring and choosability were introduced by Vizing~\cite{vizing1976} and independently by 
Erd\H{o}s et al.~\cite{erdosrubintaylor1979}.
A {\em list assignment} of $G$ is a function $L$ that assigns to each vertex 
$v \in V(G)$ a list $L(v)$ of available colors. An {\em $L$-coloring} is a function 
$\varphi: V(G) \rightarrow \bigcup_v L(v)$ such 
that $\varphi(v) \in L(v)$ for every $v \in V(G)$ and
$\varphi(u) \neq \varphi(v)$ whenever $u$ and $v$ are adjacent vertices 
of $G$. If $G$ admits an $L$-coloring, then it is {\em $L$-colorable}. 
A graph $G$ is {\em $k$-choosable} if it is $L$-colorable for
every list assignment $L$ such that $|L(v)| \ge k$ 
for all $v \in V(G)$.  The {\em distance} between two vertices is the length (number of edges) of a
shortest path between them.  The distance $d(H_1,H_2)$ between two subgraphs $H_1$ and $H_2$ is the minimum
of the distances between vertices $v_1\in V(H_1)$ and $v_2\in V(H_2)$.

The well-known Four Color Theorem (Appel and Haken~\cite{AppHak1,AppHakKoc}) states that every planar
graph is $4$-colorable.  Similarly, Gr\"{o}tzsch~\cite{grotzsch1959} proved that every triangle-free planar
graph is 3-colorable.  For some time, the question whether these results hold in the list coloring
setting was open; finally, Voigt \cite{voigt1993, voigt1995} found a planar graph that is not $4$-choosable,
and a triangle-free planar graph that is not $3$-choosable.
On the other hand, Thomassen~\cite{thomassen1994,thomassen1995-34} proved that every planar graph is $5$-choosable
and every planar graph of girth at least $5$ is $3$-choosable.  Also, Kratochv\'\i{}l and Tuza \cite{krattuza1994}
observed that every planar triangle-free graph is $4$-choosable.

Motivated by Gr\"{o}tzsch's result, Havel asked whether there exists a constant $d$ such that if
the distance between any two triangles in a planar graph is at least $d$, then the graph is $3$-colorable.
This question was open for many years, finally being answered in affirmative by Dvo\v{r}\'ak, Kr\'al' and Thomas~\cite{dkt}
(although their bound on $d$ is impractically large).
Due to the result of Voigt~\cite{voigt1995}, an analogous question for $3$-choosability needs also to restrict $4$-cycles:
does there exist a constant $d$ such that if the distance between any two $(\le\!4)$-cycles in a planar graph is
at least $d$, then the graph is $3$-choosable?  We give a positive answer to this question:

\begin{theorem}\label{thm-main}
If $G$ is a planar graph such that the distance between any two $(\le\!4)$-cycles is at least $26$,
then $G$ is $3$-choosable.
\end{theorem}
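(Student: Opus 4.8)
The plan is to argue by contradiction: suppose $G$ is a minimal counterexample, meaning $G$ has a list assignment $L$ with $|L(v)|\ge 3$ for all $v$ such that $G$ is not $L$-colorable, but every graph with fewer vertices satisfying the $(\le\!4)$-cycles-far-apart hypothesis (together with an appropriate list assignment) is colorable. To make an inductive approach work, I would not prove Theorem~\ref{thm-main} directly but rather a stronger statement about graphs embedded in the plane with one distinguished face (the outer face), where vertices on the outer walk are allowed to have smaller lists (precoloring a short path, lists of size~$1$ or~$2$ on the boundary, etc.), subject to the $(\le\!4)$-cycles being far from each other and from the outer face. This is the standard Thomassen-style setup, and the constant $26$ will emerge from bookkeeping the distances through the reduction steps.

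The first block of steps would establish the usual reducible configurations. Minimality immediately gives: $G$ is connected, has minimum degree at least~$3$ (a vertex of degree~$\le 2$ can be colored last), and is $2$-connected (if not, color the blocks separately and combine at the cut vertex). More substantially, I would rule out separating cycles of length $\le 4$: if $C$ is such a cycle, color the subgraph inside-or-on $C$ first, then extend to the outside; here the far-apart hypothesis is crucial, since it guarantees that at most one side of $C$ contains any further $(\le\!4)$-cycle, so both pieces inherit the hypothesis. After this, $G$ is essentially built from one ``core'' region carrying all the short cycles, surrounded by a ``sparse'' part of girth $\ge 5$ where Thomassen-type discharging is available.

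The main work — and the main obstacle — is the discharging argument on the sparse part together with a careful analysis of the bounded region containing the clustered short cycles. I would assign charge $\deg(v)-4$ to each vertex and $\ell(f)-4$ to each face of length $\ell(f)$, so the total is negative by Euler's formula; then design discharging rules that push charge toward the (few) short faces and toward the outer face, aiming to show every vertex and every internal face ends with nonnegative charge while the short faces and the outer face cannot absorb enough charge — a contradiction. The delicate points are: (i) controlling how precolored boundary paths interact with the discharging, which forces the outer-face distance bound; (ii) handling the neighborhood of the unique cluster of $(\le\!4)$-cycles, where girth-$5$ arguments fail and one must instead use the large distance ($\ge 26$) to isolate the cluster inside a disc whose boundary is long and far from everything, then color the outside by induction and the inside by an ad hoc extension lemma; and (iii) bounding the number of short separating $4$-cycles that could have been missed and verifying the constant propagates correctly through every reduction. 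I expect step~(ii), the interface between the clustered short cycles and the surrounding girth-$5$ part, to be where essentially all the difficulty — and the precise value $26$ — resides.
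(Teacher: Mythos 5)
Your opening block — pass to a stronger statement with a precolored path and smaller lists on the outer walk, take a minimal counterexample, establish $2$-connectivity, and eliminate separating short cycles using the far-apart hypothesis to ensure at most one side carries any further $(\le\!4)$-cycles — matches the setup of the paper quite well (Theorem~\ref{thm-maingen}, \refclaim{cl-2conn}, \refclaim{cl-inter}, Lemma~\ref{lemma-sepcycles}). One caveat already at this stage: minimality in the paper is not by vertex count but lexicographically by \emph{number of $(\le\!4)$-cycles}, then vertices, then edges, because several reductions (subdividing an edge of the outer cycle, contracting an edge, adding a helper vertex) increase the vertex count while destroying a short cycle. Your "fewer vertices" order would break those reductions.

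The genuine divergence — and the gap — is in the main work. You propose to assign charges $\deg(v)-4$ and $\ell(f)-4$ and run a discharging argument pushing charge toward short faces and the outer face. The paper does \emph{no discharging whatsoever}. Its core is a direct inductive precoloring-extension argument in the style of Thomassen's proof of $3$-choosability for girth~$5$: after controlling $1$-, $2$-, and $3$-chords of the outer cycle (\refclaim{cl-nochords}, \refclaim{cl-schords}, \refclaim{cl-nonearchords}), one tries to color up to five vertices adjacent to the precolored path and remove them, and then characterizes exactly which small configurations (the "relevant configurations" of Figures~\ref{fig-obsta} and~\ref{fig-obstb}) can obstruct this; the distance hypothesis is used to guarantee that at most one $(\le\!4)$-cycle can cause trouble, and symmetric attacks on both sides of $P$ force the obstructions to coincide and eventually disappear. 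The author explicitly discusses and declines your route in the concluding remarks: a discharging proof here would first require developing a reducible-configurations/discharging proof of $3$-choosability of girth-$5$ planar graphs (which does not currently exist), and the author estimates that this would involve on the order of hundreds of reducible configurations rather than the handful needed in the $3$-coloring analogue. Your proposal contains no actual discharging rules and no specification of the "ad hoc extension lemma" for the disc around the cluster of short cycles — the latter is itself the content of Corollary~\ref{cor-sepcycles} and Lemma~\ref{lemma-sepcycles} (the classification of $C$-obstacles for $(\le\!8)$-cycles), which is a nontrivial piece of the paper, not an afterthought. So while the outer skeleton of your plan is the right one, the engine you propose to drive it is both different from the paper's and, as things stand, essentially empty: it names a strategy that the paper's author considered and judged to be substantially harder than the argument that was actually carried out.
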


This bound is quite reasonable compared to one given for Havel's problem~\cite{dkt}.  However, it is
far from the best known lower bound of $4$, given by Aksionov and Mel'nikov~\cite{aksmel}.

\section{Proof of Theorem~\ref{thm-main}}

For a subgraph $H$ of a graph $G$, let $d(H)=\min_F d(H, F)$, where the minimum ranges over all $(\le\!4)$-cycles $F$ of $G$ distinct
from $H$.  In case that the graph $G$ is not clear from the context, we write $d_G(H)$ instead.
Let $t(G)=\min_H d(H)$, where the minimum ranges over all $(\le\!4)$-cycles $H$ of $G$.
A {\em path of length $k$} (or a {\em $k$-path}) is a path with $k$ edges and $k+1$ vertices.
For a path or a cycle $X$, let $\ell(X)$ denote its length.  Let $r$ be the function defined by
$r(0)=0$, $r(1)=2$, $r(2)=4$, $r(3)=9$, $r(4)=13$ and $r(5)=16$.  For a path $P$ of length at most $5$, let $r(P)=r(\ell(P))$.
Let $B=26$.

A \emph{relevant configuration} is a triple $(H,Q,f)$, where $H$ is a plane graph, $Q$ is a subpath of the outer
face of $H$ and $f:V(H)\setminus V(Q)\to \{2,3\}$ is a function.  
A graph $G$ with list assignment $L$ and a specified path $P$ \emph{contains} the relevant configuration $(H,Q,f)$
if there exists an isomorphism $\pi$ between $H$ and a subgraph $I$ of $G$ (which we call the \emph{image} of the configuration in $G$)
such that $\pi$ maps $Q$ to $I\cap P$, and $|L(\pi(v))|=f(v)$ for every $v\in V(H)\setminus V(Q)$.  Figures~\ref{fig-obsta} and \ref{fig-obstb}
depict the relevant configurations that are used in the proof of Theorem~\ref{thm-main} using the following conventions.
The graph $H$ is drawn in the figure.  The path $Q$ consists of the vertices drawn by full circles.  The vertices
to that $f$ assigns value $2$ are drawn by empty squares, and the vertices to that $f$ assigns value $3$ are drawn by empty circles.

Using the precoloring extension technique developed by Thomassen~\cite{thomassen1995-34}, we show
the following generalization of Theorem~\ref{thm-main}:

\begin{figure}
\center{\includegraphics{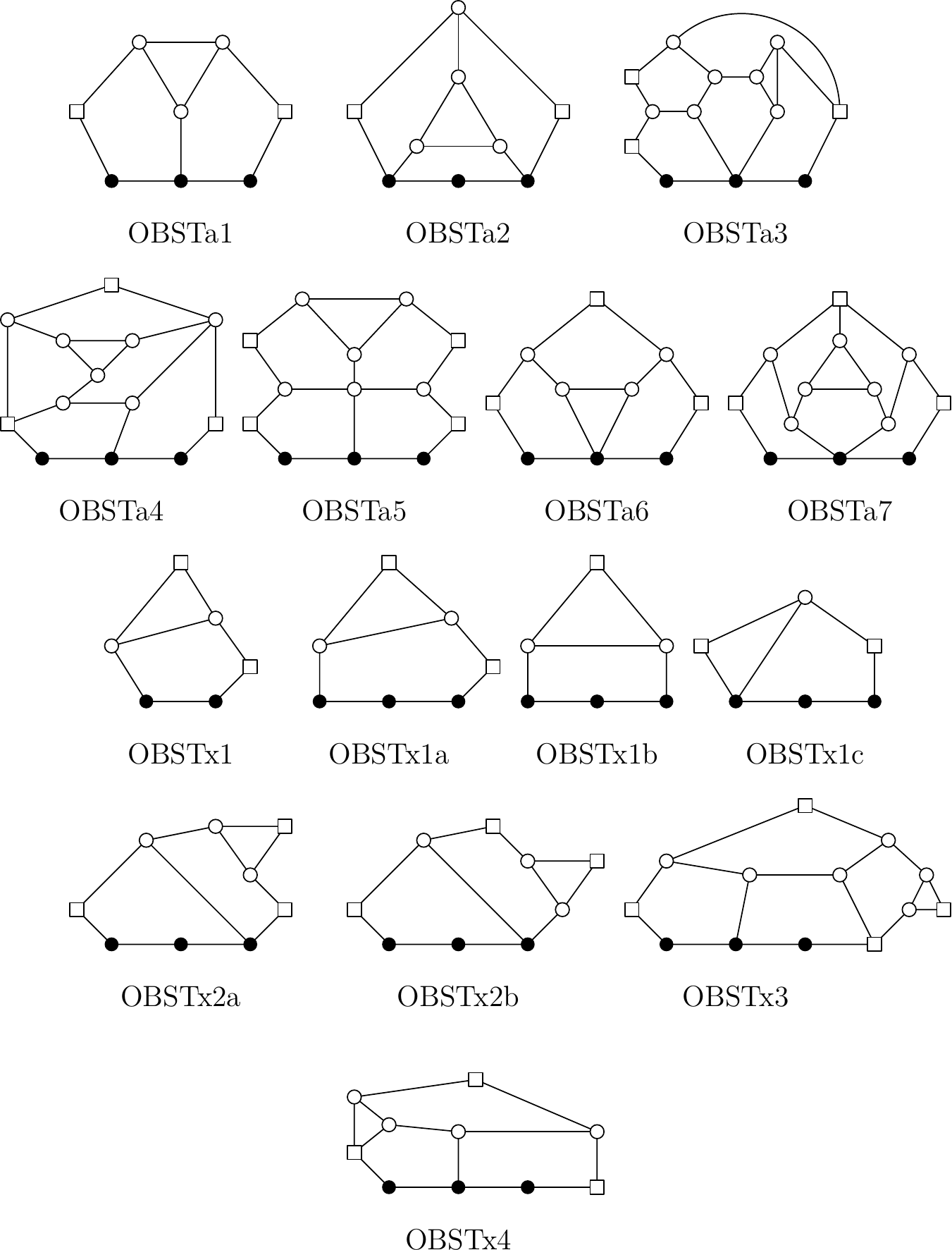}}
\caption{Relevant configurations of Theorem~\ref{thm-maingen}, $\ell(P)\le 2$.}
\label{fig-obsta}
\end{figure}

\begin{figure}
\center{\includegraphics{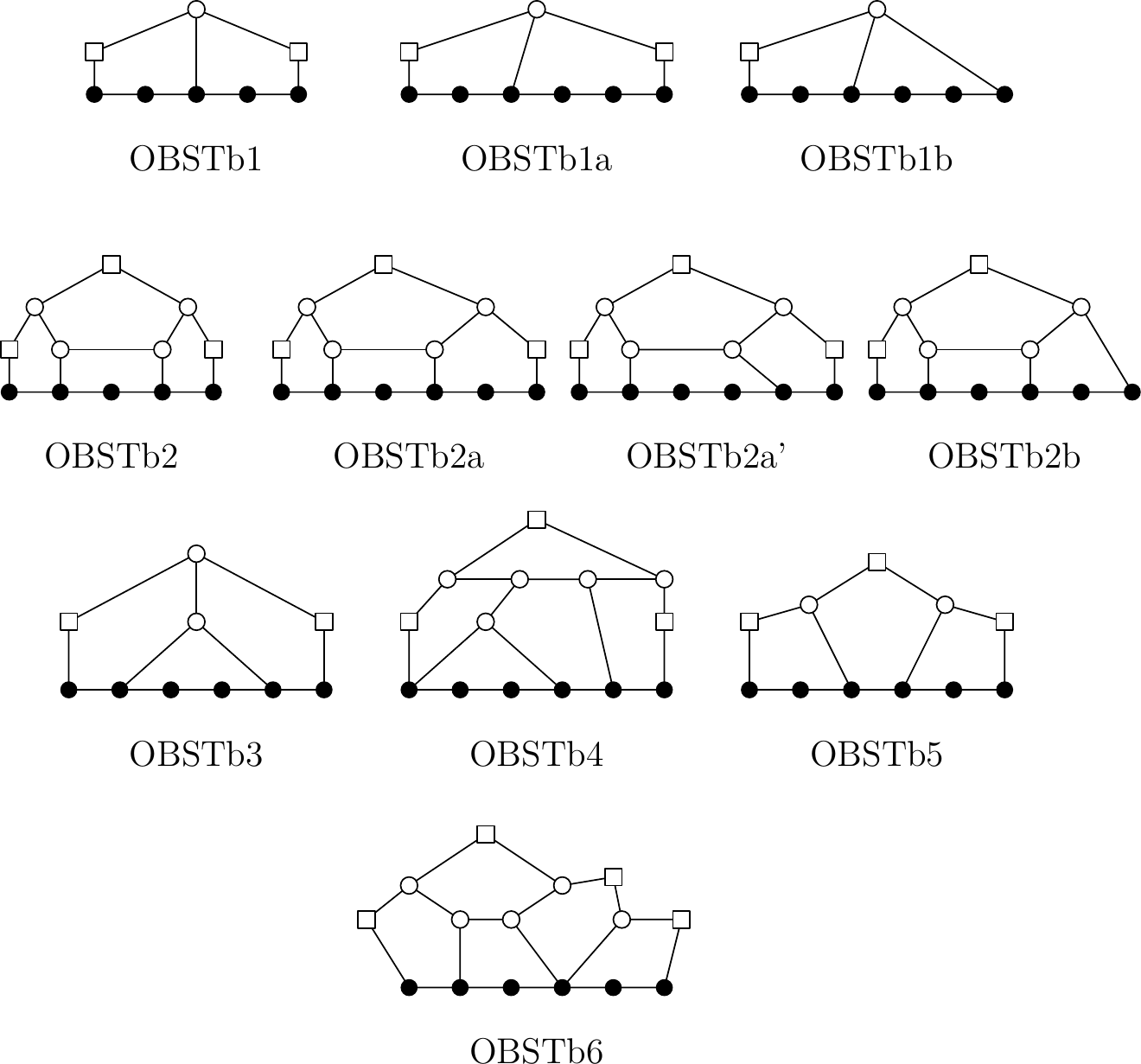}}
\caption{Relevant configurations of Theorem~\ref{thm-maingen}, $\ell(P)\le 5$.}
\label{fig-obstb}
\end{figure}

\begin{theorem}\label{thm-maingen}
Let $G$ be a planar graph with the outer face $C$ such that $t(G)\ge B$, and let $P$ be a path such
that $V(P)\subseteq V(C)$.  Let $L$ be a list assignment such that
\begin{itemize}
\item[(S1)] $|L(v)|=3$ for all $v\in V(G)\setminus V(C)$;
\item[(S2)] $2\le |L(v)|\le 3$ for all $v\in V(C)\setminus V(P)$;
\item[(S3)] $|L(v)|=1$ for all $v\in V(P)$, and the colors in the lists give a proper coloring of the subgraph of $G$ induced by $V(P)$;
\item[(I)] the vertices with lists of size two form an independent set;
\item[(T)] if $uvw$ is a triangle, $|L(u)|=2$ and $v$ has a neighbor with list of size two distinct from $u$,
then $w$ has no neighbor with list of size two distinct from $u$; and,
\item[(Q)] if a vertex $v$ with list of size two has two neighbors $w_1$ and $w_2$ in $P$, then $L(v)\neq L(w_1)\cup L(w_2)$.
\end{itemize}
Furthermore, assume that at least one of the following conditions is satisfied:
\begin{itemize}
\item[(OBSTa)] $\ell(P)\le 2$ and all images in $G$ of every relevant configuration drawn in Figure~\ref{fig-obsta} are $L$-colorable, or
\item[(OBSTb)] $\ell(P)\le 5$, $d(P)\ge r(P)$ and all images in $G$ of every relevant configuration drawn in Figure~\ref{fig-obstb} are $L$-colorable.
\end{itemize}
Then $G$ is $L$-colorable.
\end{theorem}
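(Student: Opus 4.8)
The plan is to argue by contradiction. Among all tuples $(G,C,P,L)$ satisfying the hypotheses of Theorem~\ref{thm-maingen} for which $G$ is not $L$-colorable, I would pick one, a \emph{minimal counterexample}, with $|V(G)|+|E(G)|$ as small as possible. The first task is to dispose of the degenerate cases. If $V(G)=V(C)$, then $G$ is outerplanar; ordering the non-precolored vertices of $C$ suitably and coloring greedily succeeds unless the interaction of the size-two vertices forces a clash, and conditions (I), (T), (Q) are tailored exactly to rule that out --- except for a handful of small graphs which are precisely the images of the relevant configurations of Figures~\ref{fig-obsta} and~\ref{fig-obstb}, and those are $L$-colorable by (OBSTa)/(OBSTb). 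Either way we reach a contradiction, so henceforth $G$ has an interior vertex and is ``large''.

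Next come the standard structural reductions, each proved by splitting $G$ into smaller instances, coloring them by minimality, and gluing. $G$ is connected (else color the component meeting $P$ and handle the others with empty $P$); $G$ is $2$-connected and $C$ is a cycle; $C$ has no chord; $G$ has no low-degree vertex off $P$ (for instance, an interior vertex of degree at most two --- delete it, color the rest, choose its color last); and $G$ has no separating cycle of length at most four bounding a disc with an interior vertex (color that disc first, with the separating cycle playing the role of a short precolored cycle, then extend outward). In each case one must check that (S1)--(S3), (I), (T), (Q), the distance conditions $t(\cdot)\ge B$ and $d(P)\ge r(P)$, and the relevant-configuration hypothesis are inherited by the pieces; the distance conditions only improve on subgraphs, but re-verifying them after cutting is exactly where the specific values of $r(\cdot)$ first enter.

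The heart of the proof exploits that $(\le\!4)$-cycles are pairwise at distance at least $B=26$. If $G$ contains a $(\le\!4)$-cycle, fix one, $K$, nearest to $P$; then every other $(\le\!4)$-cycle is at distance $\ge B$ from $K$, so a bounded neighborhood of $K\cup P$ contains $K$ as its only short cycle and has girth at least five near its outer boundary. I would cut $G$ along a shortest path (or short cycle) separating this bounded region from the rest, color the outer region by minimality --- after equipping the cut with a precolored path of length at most five, which is short enough for (OBSTb) and far enough from all short cycles for $d(P)\ge r(P)$ to hold, precisely because $r(1)=2,\ldots,r(5)=16$ and $B=26$ leave the needed slack --- and then extend the coloring inward across the cut. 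Iterating, we reduce to the case that every short cycle of $G$ lies within a bounded neighborhood of $P$, so $G$ is triangle-free, indeed of girth at least five, outside a controlled region near $P$.

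At this point I would run a discharging argument on the plane graph $G$: give each vertex $v$ charge $\deg(v)-4$ and each face $h$ charge $\ell(h)-4$, so the total is $-8$ by Euler's formula (with an appropriate surplus assigned to the outer face), and design rules sending charge toward the outer face and toward the unique short-cycle cluster near $P$. Using the reductions already in hand --- no chords, no separating $(\le\!4)$-cycles, no low-degree interior vertices, girth five away from $P$ --- one shows every interior vertex and every interior face ends with nonnegative charge, so the count can only fail because of a small configuration hanging off $P$; and one checks that every such surviving configuration is an extension of a relevant configuration of Figure~\ref{fig-obsta} or~\ref{fig-obstb}, whose image is $L$-colorable by hypothesis, with the coloring extending to all of $G$ by the reductions already established --- contradiction. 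I expect the main obstacle to be exactly this boundary analysis: balancing the discharging in the annulus where the precolored path $P$, the size-two vertices, and the nearby short-cycle cluster all interact, and showing that the residual bad configurations are precisely the ones drawn, so that the list in Figures~\ref{fig-obsta}--\ref{fig-obstb} is large enough to close the induction yet small enough to verify by hand for Theorem~\ref{thm-main}. Making the surgery radii $r(\cdot)$ and the constant $B$ simultaneously consistent with this discharging is the other delicate point.
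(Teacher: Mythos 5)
Your proposal diverges from the paper's proof in two substantive ways, and I think both differences matter.

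First, the minimality ordering. You take a counterexample with $|V(G)|+|E(G)|$ smallest, but the paper orders counterexamples first by the number of $(\le\!4)$-cycles, then by $|V(G)|$, then by $|E(G)|$. This is not cosmetic: several reductions in the paper produce a graph with \emph{more} vertices or edges but fewer short cycles. For example, in Lemma~\ref{lemma-sepcycles}, when $\ell(C)\in\{3,4\}$ the edge of $C$ is \emph{subdivided} by new precolored vertices so that $C$ becomes a $5$-cycle, and this is legitimate only because subdivision kills a $(\le\!4)$-cycle. Similarly several of the later claims ((cl-neartb), the (A5)/(B1) analysis, (cl-B4false)) contract edges or identify vertices and then appeal to minimality while noting that the identification does not create new short cycles. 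Your $|V|+|E|$-minimality would not justify the subdivision step, and several contractions and vertex-additions need the short-cycle count as the primary measure.

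Second, and more fundamentally, the endgame. You propose, after structural reductions, to run a discharging argument with charges $\deg(v)-4$ and $\ell(h)-4$ and argue that the surviving bad configurations near $P$ are exactly the relevant configurations of Figures~\ref{fig-obsta}--\ref{fig-obstb}. This is not what the paper does, and in fact the paper's concluding remarks explicitly consider and reject precisely this route, noting that carrying out a discharging/reducible-configurations proof (as in the solution of Havel's problem in \cite{dkt}) appears to require ``possibly hundreds'' of reducible configurations rather than the small list drawn in the figures. Instead, the paper follows Thomassen's precoloring-extension technique end to end: after establishing the chord and $k$-chord constraints (claims (cl-nochords), (cl-schords), (cl-nonearchords)), it picks a set $X$ of at most three consecutive vertices of $C$ adjacent to the end of $P$ (chosen by their list sizes), colors them, removes colors from neighbors' lists, and applies induction to $G-X$; the claims (cl-neart) and (cl-neartb) characterize exactly how this can fail, namely a $(\le\!4)$-cycle $T$ close to $X$, and the final portion of the proof runs the same argument symmetrically on the other side of $P$ and uses $t(G)\ge B$ to force the two offending short cycles to coincide, yielding enough structure to finish. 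So there is no discharging, no charge-balance computation, and no ``cluster cut'' step; the figures list obstacles to coloring-extension across short chords, not discharging residuals. Your sketch names the right high-level ingredients (minimal counterexample, structural reductions, far-apart short cycles), but the actual mechanism that closes the argument is missing, and the mechanism you substitute is the one the author tried and found intractable.
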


\noindent Note that we view the single-element lists as a precoloring of the vertices of $P$.
Also, $P$ does not have to be a part of the facial walk of $C$, as we only require $V(P)\subseteq V(C)$.

The assumptions (S3), (Q), (OBSTa) and (OBSTb) of Theorem~\ref{thm-maingen} are necessary conditions for the existence of an $L$-coloring.
The assumptions (S1), (S2) and (I) are the same as in the proof of Thomassen~\cite{thomassen1995-34}.
Finally, the assumption~(T) cannot be elimnated entirely---Figure~\ref{fig-cex} shows a non-$L$-colorable graph $G_1$ with only one
precolored vertex $x_1$ satisfying all assumptions of Theorem~\ref{thm-maingen} except for~(T).
By repeating the left part of this graph, $x_1$ can be made arbitrarily far apart from the triangle,
hence we cannot replace~(T) by adding further (finitely many) relevant configurations.  Furthermore,
we can construct a counterexample even without the precolored path $P$:
Let $G_2$ and $G_3$ with precolored vertices $x_1$ and $x_2$
be the copies of $G_1$ with the color $A$ replaced by colors $A'$ and $A''$, respectively, in the lists of all vertices.
Let $G$ be the graph obtained from $G_1$, $G_2$ and $G_3$ by identifying the vertices $x_1$, $x_2$ and $x_3$ to a single
vertex whose list is $\{A, A', A''\}$.  Note that $G$ is a counterexample to Theorem~\ref{thm-maingen} without the assumption~(T)
and that $G$ has no precolored vertices and $t(G)$ can be arbitrarily large.

\begin{figure}
\center{\includegraphics[width=120mm]{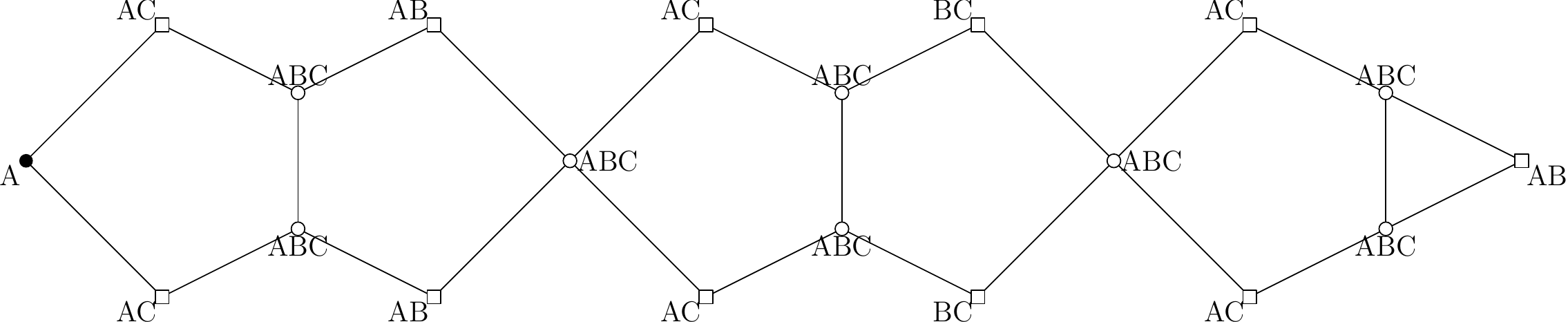}}
\caption{Assumption~(T) is necessary.}
\label{fig-cex}
\end{figure}

In his paper showing that every planar graph with at most three triangles is $3$-colorable, Aksionov~\cite{aksenov} also
proved that if $G$ is a plane graph with exactly one $(\le\!4)$-cycle, then any precoloring of a $5$-face of $G$
extends to a $3$-coloring of $G$.  Thomassen~\cite{thomassen1995-34} showed that in a planar graph of girth $5$,
any precoloring of an induced cycle $C$ of length at most $9$ extends to a $3$-coloring, unless a vertex has three neighbors in $C$.
Walls~\cite{walls} extended this characterization to cycles of length at most $11$ (giving more subgraphs that prevent the
coloring from being extended), Thomassen~\cite{thomassen-surf} generalized it for list-coloring, and Dvo\v{r}\'ak and Kawarabayashi~\cite{dk}
extended both of these results to cycles of length $12$.  Similarly, Theorem~\ref{thm-maingen} implies a result regarding extension of
a precoloring of a $(\le\!8)$-cycle, assuming that $(\le\!4)$-cycles are far apart.

Let $C$ be a $(\le\!8)$-cycle.  We call a plane graph $F$ a {\em $C$-obstacle} if $C\subseteq F$ bounds the outer face of $F$,
$F$ contains exactly one $(\le\!4)$-cycle, all vertices in $V(F)\setminus V(C)$ have degree at least three, and
\begin{itemize}
\item[O1:] $F-V(C)$ is a tree with at most $\ell(C)-6$ vertices, or
\item[O2:] $F-V(C)$ is a graph with at most $\ell(C)-3$ vertices whose only cycle is a triangle, or
\item[O3:] $F$ is one of the graphs drawn in Figure~\ref{fig-cobsta}.
\end{itemize}

\begin{figure}
\center{\includegraphics{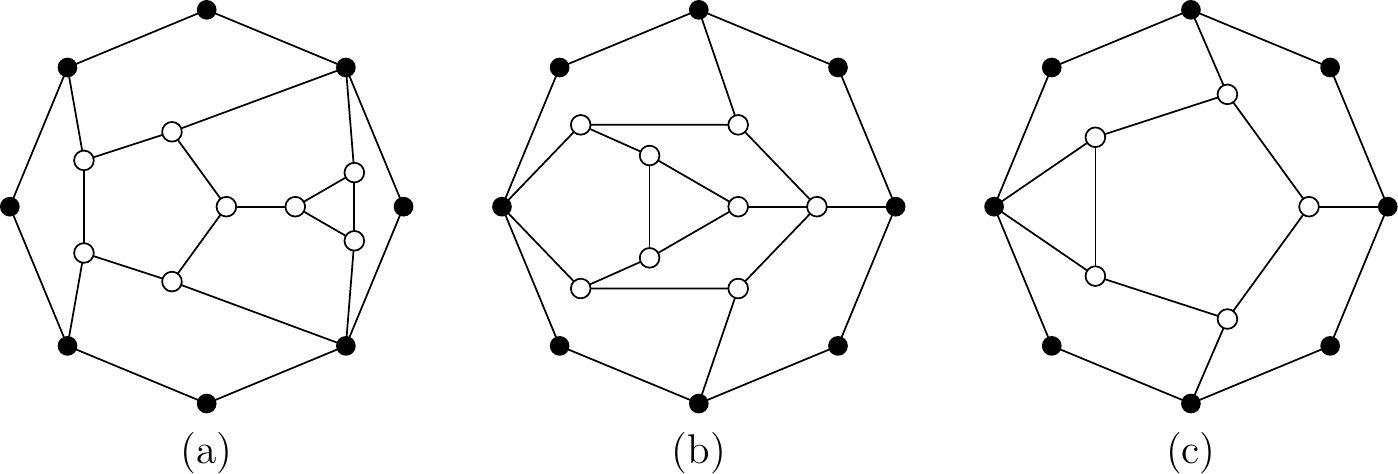}}
\caption{$C$-obstacles.}
\label{fig-cobsta}
\end{figure}

\begin{corollary}\label{cor-sepcycles}
Let $G$ be a plane graph with the outer face bounded by an induced $(\le\!8)$-cycle $C$,
such that $t(G)\ge B$.  Furthermore, assume that
$G$ does not contain a $C$-obstacle as a subgraph.
Let $L$ be an assignment of lists of size $1$ to the vertices of $C$ and lists of size $3$ to the
other vertices of $G$.  If $L$ prescribes a proper coloring of $C$, then $G$ is $L$-colorable.
\end{corollary}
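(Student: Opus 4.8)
The plan is to derive the corollary from Theorem~\ref{thm-maingen} by choosing the precolored path $P$ appropriately and verifying all the hypotheses (S1)--(S3), (I), (T), (Q), and one of (OBSTa)/(OBSTb). Since the outer cycle $C$ has length at most $8$ and every vertex of $C$ gets a list of size $1$, the natural choice is to take $P$ to be (almost all of) $C$. If $\ell(C)\le 6$, I would take $P=C$ minus one edge, so that $P$ is a path of length $\ell(P)\le 5$ with $V(P)\subseteq V(C)$; this is exactly the regime of (OBSTb). If $\ell(C)\le 3$ one can even land in the (OBSTa) regime. For the slightly longer cycles ($\ell(C)\in\{7,8\}$) one cannot fit all of $C$ into a path of length $\le 5$, so here I expect to do a small separate argument: color one or two additional vertices of $C$ greedily (their lists have size $3$ and they have few already-colored neighbors on $C$) to reduce to a precolored path of length at most $5$ together with the remaining size-$3$ lists, then apply the theorem to the graph bounded by the resulting shorter cycle. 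One must check that performing this greedy step does not violate assumption (Q) for any interior vertex with a size-$2$ list adjacent to two vertices of the new $P$; but at this stage all non-$C$ vertices have lists of size $3$, so (Q) is vacuous, and (I), (T) are likewise vacuous. So the content of the argument is essentially bookkeeping about which vertices of $C$ to absorb into $P$.

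The heart of the matter is matching the $C$-obstacles of the corollary to the relevant configurations of Theorem~\ref{thm-maingen} and checking the distance hypothesis $d(P)\ge r(P)$. Since $V(P)\subseteq V(C)$ and $G$ has at most one $(\le\!4)$-cycle near $C$ (all others being at distance $\ge B$ from $C$, hence from $P$), the quantity $d_G(P)$ is controlled by the unique close $(\le\!4)$-cycle $F$ — if it exists. I would argue that if $F$ is so close to $P$ that $d(P)<r(P)$, then the union of $F$, a shortest $F$--$C$ path, and $C$ itself is a small plane subgraph; after trimming degree-$\le 2$ vertices not on $C$ and using $t(G)\ge B$ to rule out further cycles, what remains is forced to be one of the graphs in Figure~\ref{fig-cobsta} or to have $F-V(C)$ a tree/near-tree of the sizes bounded in O1/O2 — i.e., $G$ contains a $C$-obstacle, contradicting the hypothesis. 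Conversely, when $F$ is far enough that $d(P)\ge r(P)$, the hypothesis of (OBSTb) about relevant configurations being $L$-colorable must still be discharged: but each relevant configuration of Figure~\ref{fig-obstb} either (a) forces a $(\le\!4)$-cycle close to $C$, hence forces a $C$-obstacle and is excluded, or (b) is itself trivially $L$-colorable because all its non-$Q$ vertices carry lists of size $3$ in the present setting (recall in the corollary only the vertices of $C$ have small lists, and those all lie on $P$). This last observation — that in the corollary's list assignment there are \emph{no} size-$2$ lists at all — is what makes (I), (T), (Q) and most of the relevant-configuration checks collapse.

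The main obstacle I anticipate is the careful case analysis identifying, for each length $\ell(C)\in\{4,\dots,8\}$ and each way the unique nearby $(\le\!4)$-cycle $F$ can sit inside the disk bounded by $C$, precisely which configurations of Figure~\ref{fig-obstb} (after the isomorphism $\pi$) can arise, and showing each such image is either $L$-colorable for trivial list-size reasons or else exhibits a subgraph matching O1, O2, or O3. This requires knowing the exact contents of Figures~\ref{fig-obstb} and~\ref{fig-cobsta}, and the exact numerology in the definition of $r$ and in the bounds "$\ell(C)-6$" and "$\ell(C)-3$" of O1 and O2 — these are clearly calibrated so that a tree (resp.\ near-tree) on that many interior vertices is exactly what can be attached inside a $(\le 8)$-cycle while keeping a nearby $(\le 4)$-cycle and all interior degrees $\ge 3$. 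I would organize this as: (i) reduce $\ell(C)\in\{7,8\}$ to $\ell(P)\le 5$ by greedy precoloring; (ii) dispatch the case where every $(\le\!4)$-cycle is at distance $\ge B$ from $C$ (here $d(P)$ is huge, (OBSTb) applies, and its configuration hypotheses are vacuous); (iii) handle the remaining case of a unique nearby $(\le\!4)$-cycle $F$ by showing that either $d(P)\ge r(P)$ and the relevant configurations are excluded by the no-$C$-obstacle hypothesis, or $d(P)<r(P)$ and $G$ itself already contains a $C$-obstacle — a contradiction; then invoke Theorem~\ref{thm-maingen} to get the desired $L$-coloring.
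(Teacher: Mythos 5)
Your plan takes a genuinely different route from the paper's, and as written it contains two gaps that the paper's route is specifically designed to avoid.

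The paper does not try to put all of $C$ into the precolored path. Instead (Lemma~\ref{lemma-sepcycles}), it identifies a set $X$ of $\max(1,\ell(C)-5)$ \emph{consecutive} vertices of $C$ with the two properties that every walk of length at most $3$ between vertices of $X$ stays in the subgraph induced by $X$, and no vertex of $X$ has a neighbor in a triangle (claim~\refclaim{lsc-cl-setx}). It then \emph{deletes} $X$, lets the two remaining $C$-vertices adjacent to $X$ take size-$2$ lists, and applies Theorem~\ref{thm-maingen} to $G-X$ with precolored path $P'=v_2\cdots v_{k-1}$, where $k\le5$, so $\ell(P')\le 2$. This is the (OBSTa) regime, which carries \emph{no} distance hypothesis $d(P)\ge r(P)$. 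The triangle-avoidance property of $X$ is what lets (S2), (I), (T), (Q) be verified for $G-X$, and any OBSTa configuration in $G-X$ is then translated back to a $C$-obstacle in $G$, contradicting the hypothesis.

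Your plan instead tries to put $C$ itself into the precolored path and invoke (OBSTb). First problem, the cases $\ell(C)\in\{7,8\}$: you propose to ``greedily color one or two additional vertices of $C$ --- their lists have size $3$'' --- but by hypothesis \emph{every} vertex of $C$ has a list of size $1$, so there is nothing to color greedily, and (S3) forces every size-$1$ vertex into $P$. For $\ell(C)=8$ that is $8$ vertices needing to sit on a path of at most $6$ vertices; your bookkeeping step as described does not close this. Second problem: (OBSTb) requires $d(P)\ge r(P)$, and you address the failure case by asserting that any $(\le 4)$-cycle $F$ within distance $r(P)$ of $C$ forces a $C$-obstacle. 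That is not a consequence of the definitions: a $(\le 4)$-cycle at moderate distance inside the disk bounded by $C$, with internal vertices of degree $\ge 3$, need not satisfy the tight size bounds $\ell(C)-6$ (O1) or $\ell(C)-3$ (O2) and need not be one of the finitely many graphs of (O3). The paper never has to prove such a claim precisely because, by shrinking the precolored path to length $\le 2$ and choosing $X$ away from triangles, it lands in (OBSTa) where no distance hypothesis is needed at all. Your step (iii) is therefore the crux, and it is left as a heuristic that would require the same structural claims (\refclaim{lsc-cl-no2}, \refclaim{lsc-cl-no2p}, \refclaim{lsc-cl-3chord}) that the paper proves --- at which point one might as well follow the paper's decomposition.
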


Let us give a proof of this result in a slightly more general setting, which we are going to use in the
inductive proof of Theorem~\ref{thm-maingen}.  A graph $G_1$ is {\em smaller} than $G_2$ if
\begin{itemize}
\item $G_1$ has fewer $(\le\!4)$-cycles than $G_2$, or
\item $G_1$ and $G_2$ have the same number of $(\le\!4)$-cycles and satisfy $|V(G_1)|<|V(G_2)|$, or
\item $G_1$ and $G_2$ have the same number of $(\le\!4)$-cycles, $|V(G_1)|=|V(G_2)|$ and $|E(G_1)|<|E(G_2)|$.
\end{itemize}
If $G$ is a plane graph and $C\subseteq G$ is a cycle, then let $\inter_G(C)$ denote the subgraph of $G$
drawn in the closed disk bounded by $C$ (in particular, a chord of $C$ belongs to $\inter_G(C)$ only if
it is drawn inside of $C$).

\begin{lemma}\label{lemma-sepcycles}
Let $G$ be a plane graph and $L$ a list assignment satisfying the assumptions of Corollary~\ref{cor-sepcycles}.
If Theorem~\ref{thm-maingen} holds for all graphs smaller than $G$,
then $G$ is $L$-colorable.
\end{lemma}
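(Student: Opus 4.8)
The plan is to deduce Lemma~\ref{lemma-sepcycles} from Theorem~\ref{thm-maingen} applied with $P=C$, treating the precoloured cycle $C$ as the path $Q$ of a relevant configuration and checking that the hypotheses (S1)--(S3), (I), (T), (Q) and one of (OBSTa)/(OBSTb) are met. The immediate difficulty is that $C$ is a \emph{cycle}, not a path, so I cannot feed it directly into Theorem~\ref{thm-maingen}; moreover $\ell(C)$ may be as large as $8$, exceeding the length bound $5$ in (OBSTb). So the first step is a reduction: pick an edge $e=uv$ of $C$, let $P=C-e$ (a path of length at most $7$), and aim to extend the precolouring of $P$. If $\ell(C)\le 6$ we can instead drop enough of $C$ to get $\ell(P)\le 5$; but in general we will need a separate argument for the longer cases, so I expect the proof to split on $\ell(C)$.

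First I would dispose of trivialities: if $G$ has no $(\le\!4)$-cycle at all, then $G$ has girth $\ge 5$ and the claim follows from Thomassen's theorem on precolouring extension in girth-$5$ planar graphs (or from Theorem~\ref{thm-maingen} with (OBSTb), since then there are no relevant configurations to check and $d(P)=\infty$). So assume $G$ contains exactly one $(\le\!4)$-cycle $D$; since $C$ is induced and bounds the outer face, and all internal vertices have degree $\ge 3$, standard reductions let me assume $G=\inter_G(C)$, that $G$ has no separating $(\le\!4)$-cycle other than possibly through manipulations we control, and that $G$ is connected with no cut-vertex on $C$. The key structural input is that $G$ contains no $C$-obstacle: this is exactly what rules out the ``bad'' configurations O1, O2, O3 that would obstruct the extension. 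I would then show that if the precolouring of $C$ does \emph{not} extend, the minimal counterexample $G$ (minimal in the ordering ``smaller than'') must contain one of these obstacles — contradiction.

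The heart of the argument is the reduction to Theorem~\ref{thm-maingen}. Write $C=v_0v_1\cdots v_{k-1}v_0$ with $k=\ell(C)\le 8$, set $P=v_0v_1\cdots v_{k-1}$ (so $\ell(P)=k-1\le 7$), and extend $L$ to a new assignment $L'$ by keeping the size-$1$ lists on $V(P)$ and leaving size-$3$ lists elsewhere; note $V(P)=V(C)$ so $V(C)\setminus V(P)=\emptyset$, so (S2), (I), (T), (Q) are vacuous on the boundary, and (S1), (S3) hold by hypothesis. When $k\le 6$ this directly gives $\ell(P)\le 5$, $d(P)=d(C)\ge t(G)\ge B\ge r(5)$ (using the single $(\le\!4)$-cycle $D$ and $t(G)\ge 26>16$), and (OBSTb) applies provided every image of a Figure~\ref{fig-obstb} configuration in $G$ is $L$-colourable; I would verify that such an image, together with the precoloured $C$, would force a $C$-obstacle (matching it against O1--O3), so the no-$C$-obstacle hypothesis gives exactly what (OBSTb) needs. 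When $7\le k\le 8$ I would first contract or delete a carefully chosen short sub-path of $C$ — equivalently, identify two vertices of $C$ at distance $2$ or $3$ along $C$ whose precolours are compatible, or use the fact that $C$ is induced to add a chord and split $G$ into two smaller instances along that chord, each with a shorter outer cycle — and then apply induction (``Theorem~\ref{thm-maingen} holds for all graphs smaller than $G$'', or equivalently Corollary~\ref{cor-sepcycles} inductively) to each piece. The bookkeeping here — ensuring the pieces are genuinely smaller, that $t\ge B$ is preserved, that the $C$-obstacle conditions O1--O3 (with their dependence on $\ell(C)-6$ and $\ell(C)-3$) transfer correctly to the sub-cycles, and that a $C$-obstacle in $G$ is detected from obstacles in the pieces — is the main obstacle, and is where the specific numbers $6$, $3$ and the list in Figure~\ref{fig-cobsta} get pinned down.

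Finally, I would assemble the cases: the girth-$\ge 5$ case, the ``one $(\le\!4)$-cycle, $\ell(C)\le 6$'' case via (OBSTb), and the ``$\ell(C)\in\{7,8\}$'' case via splitting plus induction. In each, non-$L$-colourability of $G$ yields (through Theorem~\ref{thm-maingen}'s contrapositive, i.e.\ the failure of (OBSTb)) a non-$L$-colourable image of a relevant configuration, which I identify with a $C$-obstacle; this contradicts the hypothesis that $G$ contains no $C$-obstacle, completing the proof. I expect the delicate point to be the exact correspondence between the relevant configurations of Figure~\ref{fig-obstb} drawn ``upward'' from a path of length $\le 5$ and the $C$-obstacles O1--O3 drawn ``inward'' from a cycle of length $\le 8$; getting the two pictures to match — in particular accounting for the two endpoints $v_0,v_{k-1}$ of $P$ being adjacent in $C$ — is the crux of the whole reduction.
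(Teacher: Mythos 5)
Your proposal contains a genuine and fatal gap in the step where you invoke Theorem~\ref{thm-maingen} via~(OBSTb). You claim that for $\ell(C)\le 6$, setting $P=C-e$ gives $d(P)=d(C)\ge t(G)\ge B$, but this inequality does not hold: $t(G)$ measures distances between \emph{pairs} of $(\le\!4)$-cycles, while $d(P)$ measures the distance from $P$ to the nearest $(\le\!4)$-cycle. In the setting of Corollary~\ref{cor-sepcycles} the graph is allowed to contain exactly one $(\le\!4)$-cycle, which makes $t(G)=\infty$ automatically, yet that unique short cycle may lie right on $C$ (indeed $C$ itself can be a $(\le\!4)$-cycle). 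In such cases $d(P)$ can be as small as $0$, so the hypothesis $d(P)\ge r(P)$ of~(OBSTb) fails, and this branch of your argument does not get off the ground. Your plan for $\ell(C)\in\{7,8\}$ — splitting along an added chord or contracting a sub-path — is left too vague to be evaluated, and an added chord or a contraction generally creates new $(\le\!4)$-cycles or destroys the correspondence to $C$-obstacles, so it is not clear this can be pushed through either.

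The paper avoids the distance problem entirely by reducing to~(OBSTa), which requires only $\ell(P)\le 2$ and carries no distance condition on $P$. Concretely, after first establishing structural facts about the minimal counterexample (no short separating cycle other than faces or $K$-obstacles, no vertex with two neighbours on $C$ except in a controlled $\ell(C)=8$ corner case, no ``near-chord'' of length two both ends in $C$), the paper chooses a set $X$ of $\max(1,\ell(C)-5)$ \emph{consecutive} vertices of $C$ that (i) is far from the unique $(\le\!4)$-cycle, and (ii) admits no walk of length $\le 3$ between two of its vertices except inside $X$. It then precolours $X$, deletes it, and applies Theorem~\ref{thm-maingen} to $G-X$ with the residual path of $C$ (of length $\le 2$) playing the role of $P$, the two vertices flanking $X$ getting lists of size two. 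Conditions~(I), (T), (Q) are verified from the structural facts, and~(OBSTa) is then shown to correspond exactly to the $C$-obstacle list (O1)--(O3). If you want to salvage your plan, the fix is precisely this: do not try to precolour all of $C$; instead delete enough of $C$ that the remaining precoloured path has length $\le 2$ and use~(OBSTa), arranging the deleted set to stay clear of the single $(\le\!4)$-cycle so that the induced list assignment on $G-X$ satisfies the independence and triangle conditions. The $\ell(C)\le 4$ cases are handled separately by subdividing an edge of $C$ to decrease the number of $(\le\!4)$-cycles.
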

\begin{proof}
Suppose that $G$ is a non-$L$-colorable graph satisfying the assumptions,
such that Lemma~\ref{lemma-sepcycles} holds for all graphs smaller than $G$.
Consider a $(\le\!8)$-cycle $K\neq C$ in $G$, and let $H=\inter_G(K)$.
If $H\neq K$, then, by the minimality of $G$, $G-(E(H)\setminus E(K))$ has an $L$-coloring $\varphi$,
and since $G$ is not $L$-colorable, the precoloring of $K$ given by $\varphi$ does not extend to an $L$-coloring of $H$.
By the minimality of $G$, we conclude the following.
\claim{lsc-cl-inter1}{Every $(\le\!8)$-cycle $K\neq C$ in $G$ either bounds a face, has a chord drawn inside
the disk bounded by $K$, or $\inter_G(K)$ contains a $K$-obstacle.}
Suppose that $K$ neither is a face nor has a chord drawn inside it, and let $F$ be a $K$-obstacle
drawn inside $K$.  Consider an internal face $K'$ of $F$ and observe that $K'$ is bounded by a cycle of length at most seven.
Since $F$ contains a $(\le\!4)$-cycle and $t(G)\ge B$, neither a chord nor a $K'$-obstacle can be drawn inside $K'$.
Therefore, \refclaim{lsc-cl-inter1} implies that $K'$ is also a face of $G$.  Since this holds for every internal face of $F$,
we conclude that $H=F$.  Therefore, \refclaim{lsc-cl-inter1} can be strengthened as follows.
\claim{lsc-cl-inter}{every $(\le\!8)$-cycle $K\neq C$ in $G$ either bounds a face, has a chord drawn inside
the disk bounded by $K$, or $\inter_G(K)$ is a $K$-obstacle.}
In particular, every $(\le\!5)$-cycle bounds a face. 

Consider a vertex $v\in V(G)\setminus V(C)$, and assume that $v$ has more than one neighbor in $C$.
If $v$ has at least three neighbors in $C$, then $G$ contains the $C$-obstacle consisting of $v$, $C$ and three
edges incident with $v$ (satisfying the condition O1).
Thus, suppose that $v$ has exactly two neighbors $w_1,w_2\in V(C)$.  Furthermore, suppose that $\ell(C)\le 7$
or that $w_1$ and $w_2$ are non-adjacent.  Let $K_1$ and $K_2$ be the two cycles formed by $w_1vw_2$ and the
two paths between $w_1$ and $w_2$ in $C$, and note that $\ell(K_1), \ell(K_2)\le 8$ and both $K_1$ and $K_2$
are induced cycles.  By \refclaim{lsc-cl-inter} and the assumption that $t(G)\ge B$,
we conclude that at least one of $K_1$ and $K_2$
(say $K_1$) bounds a face.  By the minimality of $G$, $v$ has degree at least three, thus $K_2$ does not bound a face.
Again, since $t(G)\ge B$, this implies that $\ell(K_1)\ge 5$
and $6\le\ell(K_2)\le 7$.  Thus, the subgraph $F_2$ drawn inside $K_2$ is a $K_2$-obstacle satisfying condition O1 or O2, and
$F_2\cup K_1$ is a $C$-obstacle in $G$, a contradiction.  It follows that \claim{lsc-cl-no2}{no vertex $v\in V(G)\setminus V(C)$
has more than one neighbor in $C$, unless $\ell(C)=8$ and the neighbors of $v$ in $C$ are adjacent.}
Also, observe that
\claim{lsc-cl-no2p}{if $\ell(C)=8$ and $v$ has two adjacent neighbors $w_1$ and $w_2$ in $C$, then no neighbor $x$ of $v$
distinct from $w_1$ and $w_2$ is adjacent to a vertex in $C$,} as otherwise \refclaim{lsc-cl-inter} together with $t(G)\ge B$
implies that either $x$ has degree two or $x$ has two non-adjacent neighbors in $C$.

Suppose now that two adjacent vertices $v_1,v_2\in V(G)\setminus V(C)$ both have a neighbor in $C$.
By \refclaim{lsc-cl-no2} and \refclaim{lsc-cl-no2p}, each of them has exactly one such neighbor;
let $w_i\in V(C)$ be the neighbor of $v_i$, for $i\in\{1,2\}$.  Furthermore, suppose that
both (induced) cycles $K_1$ and $K_2$ consisting of $w_1v_1v_2w_2$ together with a path joining $w_1$ with $w_2$ in $C$ have
length at least $6$.  Note that $\ell(K_1)+\ell(K_2)=\ell(C)+6$, thus $\ell(K_1),\ell(K_2)\le \ell(C)$ and $\ell(C)\ge 6$.
Since $t(G)\ge B$, \refclaim{lsc-cl-inter} implies that,
say, $K_1$ bounds a face and $\inter_G(K_2)$ is a $K_2$-obstacle.
Consider the graph $G'$ with outer face $C'$ obtained from $G$ by contracting an edge $e$ of the path $K_1-\{w_1,v_1,v_2,w_2\}$
and giving the resulting vertex a color different from the color of its neighbors.  By \refclaim{lsc-cl-inter}, $e$ does
not belong to a $(\le\!5)$-cycle in $G$, thus the contraction does not create any $(\le\!4)$-cycle.
Also, as $G$ contains only one cycle of length at most $4$ (drawn inside $K_2$), the restriction on the
distance between $(\le\!4)$-cycles in $G'$ is vacuously true.  The graph $G'$ is not $L$-colorable, and by the minimality
of $G$, it contains a $C'$-obstacle satisfying O1 or O2.  However, this gives a corresponding $C$-obstacle in $G$.
Therefore, \claim{lsc-cl-3chord}{if each of two adjacent vertices $v_1,v_2\in V(G)\setminus V(C)$ has a neighbor in $C$,
then they together with a path in $C$ bound a face of length at most $5$.}

If $3\le \ell(C)\le4$, then consider the graph $G'$ obtained from $G$ by subdividing an edge of $C$ by $5-\ell(C)$ new
vertices, and giving these vertices distinct colors that do not appear in any of the lists of $G$.
Note that $G'$ is smaller than $G$, since it contains fewer $(\le\!4)$-cycles, and by the minimality of $G$, we
conclude that $G'$ is $L$-colorable.  However, that gives an $L$-coloring of $G$, thus we can assume that $\ell(C)\ge 5$.

Let us now show that
\claim{lsc-cl-setx}{there exists a set $X\subseteq V(C)$ of $\max(1,\ell(C)-5)$ consecutive vertices
of $C$ such that
\begin{itemize}
\item every walk in $G$ of length at most $3$ whose endvertices belong to $X$ is contained in the subgraph of $G$ induced by $X$, and
\item no vertex of $X$ has a neighbor in a triangle.
\end{itemize}}
If $\ell(C)\le 7$, then by \refclaim{lsc-cl-no2}, at most three vertices of $C$ belong to or have a neighbor in a triangle,
and at most two vertices belong to a $4$-cycle.  Since $t(G)\ge B$, these cases are mutually exclusive,
thus we can choose $X$ as a subset of the remaining (at least $\ell(C)-3$) vertices.
Hence, suppose that $\ell(C)=8$ and $C=v_1v_2\ldots v_8$.  If say $v_2v_3$ is an edge of a triangle,
then none of $v_5$, \ldots, $v_8$ has a neighbor in a triangle.  If $v_5v_6v_7$ is not a part of the boundary
walk of a $5$-face, then set $X=\{v_5,v_6,v_7\}$; otherwise, $v_6v_7v_8$ is not a part of the boundary
walk of a $5$-face by \refclaim{lsc-cl-no2}, and we set $X=\{v_6,v_7,v_8\}$.  We choose the set $X$ in the
same way in case that a triangle shares a single vertex $v_2$ with $C$, or a $4$-cycle shares
at most two vertices $v_2$ and $v_3$ with $C$, or no $(\le\!4)$-cycle intersects $C$ and
at least $4$ consecutive vertices $v_5$, $v_6$, $v_7$ and $v_8$ have no neighbor in a triangle.
It remains to consider the case that no $(\le\!4)$-cycle intersects $C$ and among each $4$ consecutive vertices,
at least one has a neighbor in a triangle.  If three vertices of $C$ had a neighbor in a triangle, then
\refclaim{lsc-cl-inter} would imply that $G-V(C)$ is a triangle, giving a $C$-obstacle satisfying O2.  Therefore, two opposite vertices of $C$,
say $v_1$ and $v_5$, have a neighbor in a triangle.  However, this contradicts \refclaim{lsc-cl-no2} or \refclaim{lsc-cl-3chord}.

Therefore, the set $X$ with properties specified by \refclaim{lsc-cl-setx} exists.
Let $C-X=v_1v_2\ldots v_k$, where $k=\ell(C)-|X|\le 5$.
Let $G'=G-X$, with the list assignment $L'$ obtained from $L$ by removing from the list of each vertex the colors
of its neighbors (if any) in $X$.  Furthermore, we set $L'(v_1)=L(v_1)\cup L(v_2)$ and
$L'(v_k)=L(v_k)\cup L(v_{k-1})$.  We apply Theorem~\ref{thm-maingen} with the list assignment $L'$
and the precolored path $P'=v_2v_3\ldots v_{k-1}$.  The conditions~(S1) and~(S3) are clearly satisfied,
and~(S2) holds by \refclaim{lsc-cl-no2} and \refclaim{lsc-cl-setx}.  Every vertex with list of size two
has a neighbor in $X$, and if two of them were adjacent, $G$ would contain a walk of length three
contradicting \refclaim{lsc-cl-setx}; therefore,~(I) holds.  Similarly, \refclaim{lsc-cl-setx} implies
that no vertex with list of size two is contained in a triangle, which together with \refclaim{lsc-cl-no2}
implies that no vertex with list of size two has two neighbors in $P'$; hence, the conditions~(T) and~(Q) are true.

Observe that an $L'$-coloring of $G'$ would extend to an $L$-coloring of $G$, and thus $G'$ is not
$L'$-colorable.  Since $\ell(P')=k-3\le 2$, we conclude that the condition~(OBSTa) of Theorem~\ref{thm-maingen}
is false.  Therefore, $k=5$ (and hence $\ell(C)\ge 6$) and $G'$ contains a relevant configuration $H$
which is one of OBSTa1 -- OBSTa7 drawn in Figure~\ref{fig-obsta}.  However, a case analysis shows that
\begin{itemize}
\item if $H$ is OBSTa1 or OBSTa2, then $G$ contains a $C$-obstacle satisfying~(O2),
\item if $H$ is OBSTa3, then $G$ contains the $C$-obstacle drawn in Figure~\ref{fig-cobsta}(a).
\item if $H$ is OBSTa4, OBSTa5 or OBSTa7, then $G$ contains the $C$-obstacle drawn in Figure~\ref{fig-cobsta}(b).
\item if $H$ is OBSTa6, then $G$ contains the $C$-obstacle drawn in Figure~\ref{fig-cobsta}(c).
\end{itemize}
\end{proof}

Let us now give a short outline of the proof of Theorem~\ref{thm-maingen}.  We basically follow the proof of the Gr\"otzsch theorem
by Thomassen~\cite{thomassen1995-34}, which the reader should be familiar with.  We consider a hypothetical smallest counterexample.
First, we give constraints on short paths $Q$ whose endvertices belong to $C$ and internal vertices do not belong to $C$ (claims \refclaim{cl-2conn}, \refclaim{cl-nochords} and
\refclaim{cl-schords} in the proof), by splitting the graph along $Q$, coloring one part and extending the coloring to the second
one, with $Q$ playing the role of the precolored path in the second part.  However, due to the
existence of counterexamples to the statement ``every precoloring of a path of length two can be extended''
(depicted in Figure~\ref{fig-obsta}), we cannot exclude such paths entirely.  Nevertheless, using the ability to color
vertices of a path of length up to $5$ if we can in the process ensure that there are no $(\le\!4)$-cycles nearby,
we can strengthen these constraints sufficiently if the vertices of $Q$ are close to $P$ (claims \refclaim{cl-nosimt} and
\refclaim{cl-nonearchords}).  Then, as in Thomassen's proof, we try to color up to five appropriately chosen vertices of $G$ near to $P$ and
remove their colors from the lists of their neighbors, so that the resulting graph $G'$ satisfies the assumptions of
Theorem~\ref{thm-maingen}.  This may only fail if a $(\le\!4)$-cycle $T$ appears near to the colored vertices, making~(I) or~(T) false
(claims \refclaim{cl-neart} and \refclaim{cl-neartb}).  Note that this implies that $\ell(P)\le 2$.  Many of these problematic configurations 
(those where $T$ is a $4$-cycle, or where~(T) is false in $G'$) can be reduced by precoloring up to three more vertices near to $T$,
adding them to the precolored path, and at the same time removing some vertices so that $T$ disappears.
Still, some cases (e.g., when $T$ contains a vertex in $C$ whose distance from $P$ is at most four)
remain.  However, then we observe that we can apply the symmetric argument on the other side of $P$, and if that
fails as well, a $(\le\!4)$-cycle $T'$ must be close to the vertices that we try to color there as well.  Since the distance
between any two $(\le\!4)$-cycles in $G$ is at least $B$, it follows that $T'=T$, which implies that $G$ contains
a short path $Q$ with endvertices in $C$.  Using the constraints on such paths,
we can find a suitable set of vertices to color and remove in this case as well, finally finishing the proof.

Let us now provide the details of this argument, which unfortunately turns out to be rather lengthy and technical.

\begin{proof}[Proof of Theorem~\ref{thm-maingen}.]
Suppose that $G$ together with lists $L$ is a smallest counterexample, i.e.,
Theorem~\ref{thm-maingen} holds for every graph smaller (in the sense as defined before Lemma~\ref{lemma-sepcycles}) than $G$, and
$G$ satisfies the assumptions of Theorem~\ref{thm-maingen},
but $G$ is not $L$-colorable.  Let $C$ be the outer face of $G$ and $P$ a path with $V(P)\subseteq V(C)$
as in the statement of the theorem.  We first derive several properties of this counterexample.  Note that each
vertex $v$ of $G$ has degree at least $\max(2,|L(v)|)$, and if two vertices $u$ and $v$ are adjacent, then $L(u)\cap L(v)\neq\emptyset$,
unless $uv$ is an edge of $P$.  In particular, if $v\not\in V(P)$ is adjacent to a vertex $p\in V(P)$, then $L(p)\subset L(v)$.

Lemma~\ref{lemma-sepcycles} implies that
\claim{cl-inter}{every $(\le\!8)$-cycle $K$ in $G$ bounds a face, or has a chord drawn inside
the disk bounded by $K$, or $\inter_G(K)$ is a $K$-obstacle.}
In particular, every $(\le\!5)$-cycle in $G$ bounds a face.  Furthermore,

\claim{cl-2conn}{the graph $G$ is $2$-connected.}
\begin{proof}
Clearly, $G$ is connected.  Suppose that $G$ is not $2$-connected, and let $G=G_1\cup G_2$, where $V(G_1)\cap V(G_2)=\{v\}$
and $|V(G_1)|,|V(G_2)|\ge 2$.  If say $P\subseteq G_1$, then by the minimality of $G$, an $L$-coloring $\varphi_1$ of $G_1$ exists.
Let $L_2$ be the list assignment such that $L_2(x)=L(x)$ for $x\neq v$ and $L_2(v)=\{\varphi_1(v)\}$.
By the minimality of $G$, we have that $G_2$ is $L_2$-colorable.  However, this gives an $L$-coloring of $G$.
Similarly, in case that the cut-vertex $v$ is an internal vertex of $P$, the minimality of $G$ implies that both
$G_1$ and $G_2$ are $L$-colorable, giving an $L$-coloring of $G$.  This is a contradiction.
\end{proof}

A {\em chord} of a cycle $K$ is an edge $e\not\in E(K)$ joining two vertices of $K$.
A vertex of a path is {\em internal} if its degree in the path is two, and an {\em endvertex} otherwise.

\claim{cl-nochords}{Every chord of $C$ joins two vertices $u$ and $v$ with lists of size three, such that either
\begin{itemize}
\item $u$ and $v$ have a common neighbor with list of size two; or,
\item there exist a triangle $w_1w_2w_3$ and a vertex $z\not\in\{w_2,w_3\}$ adjacent to $w_1$, such that $|L(w_2)|=|L(z)|=2$ and either $uz, vw_3\in E(G)$ or $uw_3,vz\in E(G)$.
\end{itemize}}
\begin{proof}
Let $uv$ be a chord of $C$.  Let $G=G_1\cup G_2$, where $V(G_1)\cap V(G_2)=\{u,v\}$ and $|V(G_1)|,|V(G_2)|\ge 3$.
By symmetry, we can assume that $|V(G_1)\cap V(P)|\ge |V(G_2)\cap V(P)|$.
If $u,v\in V(P)$, then by the minimality of $G$, both $G_1$ and $G_2$ are $L$-colorable, and their colorings
combine to an $L$-coloring of $G$.  This is a contradiction, thus we can assume that $v\not\in V(P)$.
Let $P_2=(P\cap G_2)\cup \{uv\}$; note that if $u\not\in V(P)$, then $P\subset G_1$, hence $P_2$ is a path.
If $u\in V(P)$, then let $P_1$ denote the path $(P\cap G_1)\cup \{uv\}$.

By the minimality of $G$, we can apply Theorem~\ref{thm-maingen} to $G_1$ with the precolored
path $P\cap G_1$ and the list assignment $L$ and conclude that there exists an $L$-coloring $\varphi$ of $G_1$.
Let $L'$ be the list assignment such that $L'(x)=L(x)$ for $x\not\in \{u,v\}$ and
$L'(x)=\{\varphi(x)\}$ for $x\in \{u,v\}$.  Since $G$ is not $L$-colorable,
$G_2$ with the precolored path $P_2$ is not $L'$-colorable, and thus it violates either~(Q) or both~(OBSTa) and~(OBSTb).

Suppose first that $u$ is not an internal vertex of $P$.  Then $\ell(P_2)=1$, and thus $G_2$ contains either a vertex with list of size two adjacent to
$u$ and $v$ or OBSTx1.  By~(I) and~(T), neither $L(u)$ nor $L(v)$ has size two.  Furthermore, note that $u$ cannot be an endvertex of $P$:
Otherwise, we have $d(P)\le 2$, thus $\ell(P)\le 2$. Let $c\neq \varphi(v)$ be a color in $L(v)\setminus L(u)$
and $L_2$ the list assignment such that $L_2(v)=\{c\}$ and $L_2(x)=L(x)$ for $x\neq v$.
Note that $G_2$ with list assignment $L_2$ and precolored path $uv$ satisfies~(Q) and~(OBSTa),
and by the minimality of $G$, $G_2$ is $L_2$-colorable.  It follows that $G_1$ (with the precolored path $P_1$) cannot
be $L_2$-colorable.  However, the distance between $u$ and a triangle in $G_2$ is at most two, hence in $G_1$,
we have $d(P_1)\ge B-4\ge r(P_1)$.
Since $G_1$ is not $L_2$-colorable, it follows that $G_1$ violates~(Q).  However, that
implies that $G$ contains a non-$L$-colorable OBSTx1c, OBSTx2a or OBSTx2b, which is a contradiction.
Therefore, if $u$ is not an internal vertex of $P$, then the chord $uv$ satisfies the conclusion of \refclaim{cl-nochords}.

Let us now consider the case that $u$ is an internal vertex of $P$. 
By the choice of $G_1$ and $G_2$,
we have $2\ell(P_2)\le \ell(P)+2$.  Suppose first that $\ell(P_2)=2$.  By the minimality of $G$, we conclude that~(S3),
(Q) or~(OBSTa) fails for $G_2$ with the list assignment $L'$ and precolored path $P_2$. This implies that $d(P)\le 3$, and since $G$ satisfies the assumptions of
Theorem~\ref{thm-maingen}, we have $\ell(P)=2$.  However, then we have $\ell(P_1)=2$
and we can apply a symmetrical argument to $G_1$, concluding that $G_1$ also contains a $(\le 4)$-cycle
at distance at most three from $P$.  This implies that $t(G)\le 8$, which is a contradiction.

Therefore, we can assume that $\ell(P_2)=3$, and thus $\ell(P)\ge 4$ and $d(P)\ge r(P)$.  Note that
$d(P_2)\ge d(P)-1$, and thus $d(P_2)\ge r(P_2)$.  By the minimality of $G$, we have that $G_2$ fails~(Q),
and $G_2$ contains a vertex $w$ with $|L(w)|=2$ adjacent both to $v$ and to an endvertex of $P$.
Let $\varphi'$ be an $L$-coloring of $G_2$ and let $L''$ be the list assignment for $G_1$
such that $L''(v)=\{\varphi'(v)\}$ and $L''$ matches $L$ on other vertices of $G_1$.
Note that $G_1$ is not $L''$-colorable.  By the minimality of $G$, we can apply Theorem~\ref{thm-maingen}
to $G_1$ with the list assignment $L''$ and the precolored path $P_1$.  We conclude that
either $G_1$ fails~(Q), or $\ell(P)=5$ and $G_1$ fails either~(S3) or~(OBSTb)
due to an image of OBSTb1 or OBSTb2.  This together with a $5$-cycle in $G_2$ gives an image $H$ of one of relevant configurations OBSTb1, OBSTb1a, OBSTb1b,
OBSTb5 or OBSTb6; and by \refclaim{cl-inter}, this image is unique.  By~(OBSTb), $H$ has an $L$-coloring $\psi$.
Let $L^*$ be the list assignment such that $L^*(v)=\{\psi(v)\}$ and $L^*$ matches $L$ on other vertices of $G$.
Observe that $G_2$ with the list assignment $L^*$ and precolored path $P_2$ satisfies~(Q) and
that $G_1$ with the list assignment $L^*$ and precolored path $P_1$ satisfies~(Q),~(S3) and~(OBSTb).
By the minimality of $G$, we conclude that both $G_1$ and $G_2$ are $L^*$-colorable,
which gives an $L$-coloring of $G$.  This is a contradiction.
\end{proof}

Let us note that \refclaim{cl-nochords} implies that $P$ is a subpath of $C$.  Furthermore, observe that
there exists an $L$-coloring of the subgraph of $G$ induced by $V(C)$, unless
$G$ contains a non-$L$-colorable OBSTx1, OBSTx1a or OBSTx1b.  Lemma~\ref{lemma-sepcycles} then implies that
\claim{cl-length}{the length of $C$ is at least $9$.}
\begin{proof}
If $\ell(C)\le 8$, then $G$ would contain a $C$-obstacle $H$, and by \refclaim{cl-inter}, it would actually
be equal to this $C$-obstacle.  Since each $C$-obstacle contains a $(\le\!4)$-cycle whose
distance from any vertex of $C$ is at most $4$, this is only possible if $\ell(P)\le 2$.  However, a straightforward
case analysis shows that either $G$ is $L$-colorable or violates~(OBSTa), which is a contradiction.  More precisely,
\begin{itemize}
\item If $H$ satisfies~(O1) and $|V(H)\setminus V(C)|=1$, then either $G$ is an image of OBSTa1 or $G$ is $L$-colorable.
\item If $H$ satisfies~(O1) and $|V(H)\setminus V(C)|=2$, then either $G$ is an image of OBSTa6 or OBSTx4 or $G$ is $L$-colorable.
\item If $H$ satisfies~(O2) and $|V(H)\setminus V(C)|=3$, then either $G$ is an image of OBSTa2 or $G$ is $L$-colorable.
\item If $H$ satisfies~(O2) and $|V(H)\setminus V(C)|=4$, then $G$ is $L$-colorable.
\item If $H$ satisfies~(O2) and $|V(H)\setminus V(C)|=5$, then either $G$ is an image of OBSTa3, OBSTa4 or OBSTa7, or $G$ is $L$-colorable.
\item If $H$ satisfies~(O3), then $G$ is $L$-colorable.
\end{itemize}
\end{proof}

For $k\ge 2$, a {\em $k$-chord} of a cycle $K$ is a path $Q=q_0q_1\ldots q_k$ of length $k$ joining two distinct vertices
of $K$, such that $V(K)\cap V(Q)=\{q_0,q_k\}$.  We consider a chord to be a {\em $1$-chord}.

Suppose that $Q=q_0q_1\ldots q_k$ is a $k$-chord of the outer face $C$ of $G$, such that neither $q_0$ nor $q_k$ is an internal vertex of $P$.
Let $G_1$ and $G_2$ be subgraphs of $G$ such that $G_1\cup G_2=G$, $G_1\cap G_2=Q$, $P\subset G_1$ and $G_2\neq Q$.
We say that $Q$ {\em splits off a face} if $G_2$ is a cycle.
If $O=(H,R, f)$ is a relevant configuration, we say that the $k$-chord $Q$ {\em splits off $O$} if
there exists an isomorphism $\pi$ from $O$ to $G_2$ such that
\begin{itemize}
\item $\pi$ maps $R$ to the (not necessarily proper) subpath of $Q$ consisting of all vertices $x\in V(Q)$ such that $|L(x)|\in\{1,3\}$, and
\item $|L(\pi(v))|=f(v)$ for every $v\in V(H)\setminus V(R)$.
\end{itemize}

\claim{cl-schords}{Let $Q=q_0q_1\ldots q_k$ be a $k$-chord of $C$ such that no endvertex of $Q$ is an internal vertex of $P$
and $Q$ does not split off a face.
If $k\le 2$, or if $k=3$ and $q_3$ has list of size two, then $Q$ splits off
one of the relevant configurations drawn in Figure~\ref{fig-obsta}.}
\begin{proof}
Suppose for a contradiction that there exists a $k$-chord $Q$ violating \refclaim{cl-schords}.
Let $G_1$ and $G_2$ be subgraphs of $G$ such that $G_1\cup G_2=G$, $G_1\cap G_2=Q$, $P\subset G_1$ and $G_2\neq Q$.
Let us choose $Q$ among all $(\le\!3)$-chords of $C$ that violate \refclaim{cl-schords} so that $|V(G_2)|$ is minimal.

By the minimality of $G$, there exists an $L$-coloring $\varphi$ of $G_1$.
Let $L'$ be the list assignment such that $L'(x)=L(x)$ if $x\not\in V(Q)$, $L'(q_3)=\{\varphi(q_2),\varphi(q_3)\}$ if
$k=3$ and $L'(q_i)=\{\varphi(q_i)\}$ for $0\le i\le 2$.  Observe that $G_2$ with the precolored path $q_0q_1q_2$ is not $L'$-colorable, thus
it violates~(Q) or~(OBSTa).  Let $H$ be the minimal subgraph of $G_2$ that contains $q_0q_1q_2$ and violates~(Q) or~(OBSTa).
Note that $H$ contains a $(\le\!4)$-cycle $T$ whose distance to any vertex of $H$ is at most four.
By \refclaim{cl-inter}, each face of $H$ except for the outer one is also a face of $G$.

We claim that $G_2=H$, that is, $Q$ splits off the corresponding face or relevant configuration.  Otherwise, consider a $k'$-chord $Q'\neq Q$ of $G_2$ that is a subpath of
the union of $Q$ and of the outer face of $H$.  If $Q'$ satisfies the assumptions of \refclaim{cl-schords}, then by the choice of $Q$,
we have that that $Q'$ splits off a subgraph $H'$ that is either a face or an image of a relevant configuration drawn in Figure~\ref{fig-obsta}.  However, $H'$ contains
a $(\le\!4)$-face $T'$, whose distance to $Q'$ is at most three.  It follows that $d(T, T')\le 7 < B$, which is a contradiction.
Therefore, $Q'$ does not satisfy the assumptions of \refclaim{cl-schords}.  Since every vertex with list of size two in $H$ belongs to the
outer face of $G$, an inspection of the graphs in Figure~\ref{fig-obsta} shows that
this is only possible if $k=3$, $H$ is OBSTx1 and $Q'=q_3q_2q_1uv$ for vertices $u,v\in V(H)\setminus V(Q)$ such that
$|L(u)|=3$ and $|L(v)|=2$.  However, in this case let $G'_1$ and $G'_2$ be the subgraphs of $G$ such that $G'_1\cup G'_2=G$, $G'_1\cap G'_2=Q'$,
$P\subset G'_1$ and $G'_2\neq Q'$, let $\varphi'$ be an $L$-coloring of $G'_1$ and let $L_2$ be the list assignment such that $L_2(x)=\{\varphi'(x)\}$ for
$x\in \{v,q_1,q_2\}$, $L_2(q_3)=\{\varphi'(q_3),\varphi'(q_2)\}$, $L_2(v)=\{\varphi'(u),\varphi'(v)\}$
and $L_2(x)=L(x)$ for other vertices $x\in V(G'_2)$.  Since $t(G)\ge B$ and $H$ contains $T$, we conclude that
$G'_2$ satisfies the assumptions of Theorem~\ref{thm-maingen}, hence $G'_2$ is $L_2$-colorable.  This gives an $L$-coloring of $G$,
which is a contradiction.
\end{proof}

\refclaim{cl-inter} and \refclaim{cl-schords} imply that
\claim{cl-noobstr}{none of the relevant configurations depicted in Figures~\ref{fig-obsta} and \ref{fig-obstb}
is contained in $G$.}
Indeed, if $H$ were an image of such a configuration in $G$, we would conclude that $G=H$ as in the proof of \refclaim{cl-schords},
and by the assumptions, $G$ would be $L$-colorable.

\claim{cl-2pchord}{If $Q=q_0q_1q_2$ is a $2$-chord of $C$ in $G$, then at most one endvertex of $Q$ belongs to $P$.}
\begin{proof}
Suppose that both $q_0$ and $q_2$ belong to $P$.  Then $Q$ together with a subpath of $P$ forms a cycle $K$ of length at most
$\ell(P)+2$, and by \refclaim{cl-inter} together with the assumption
that $d(P)\ge r(P)$ if $\ell(P)>2$, this cycle bounds a face.  Observe that $q_1$ cannot have a neighbor in $P$ distinct
from $q_0$ and $q_2$. Furthermore, we have $|L(q_1)|=3$, since $\ell(C)>7$ and $C$ has no chords.  Let $L'$ be the list assignment such that $L'(q_1)\subseteq L(q_1)\setminus~(L(q_0)\cup L(q_2))$ has size one and $L'(x)=L(x)$ for $x\neq q_1$.
Let $G'=G-q_0q_2$ if $K$ is a triangle and $G'=G-(V(K)\setminus V(Q))$ otherwise.
Note that the vertices with list of size one form an induced path $P'$ in $G'$, and the length of $P'$ is
at most $\ell(P)-1$ if $K$ has length at least $5$ and at most $\ell(P)+1$ otherwise.  In the former case,
if $d_G(P)\ge r(P)$, then $d_{G'}(P')\ge r(P')$, since $d_{G'}(P')\ge d_G(P)-1$. In the latter case,
we have $\ell(P)\le 2$ and $d_{G'}(P')\ge r(P')$, since $d_G(K)\ge B$.
Since $G'$ is smaller than $G$ and is not $L'$-colorable, we conclude that it violates~(Q) or~(OBSTb).
However, in these cases, $G$ itself would violate~(OBSTb): If $G'$ violates~(Q), then $G$ contains
OBSTb1b; if $G'$ contains OBSTb1, then $G$ contains OBSTb3; and if $G'$ contains OBSTb2, then $G'$ contains OBSTb4.
\end{proof}

\claim{cl-4chord}{Suppose that $C$ has either a $3$-chord $Q=q_0q_1q_2q_3$,
or a $4$-chord $Q=q_0q_1q_2q_3q_4$ such that $|L(q_4)|=2$, where no endvertex of $Q$
is an internal vertex of $P$.  
Let $G_1$ and $G_2$ be subgraphs of $G$ such that $G_1\cup G_2=G$, $G_1\cap G_2=Q$, $P\subset G_1$ and $G_2\neq Q$.
Assume that either
\begin{itemize}
\item $\ell(P)\ge 4$ and $d(P, q_i)\le r(4)-r(3)=4$ for $0\le i\le 3$, or
\item $G_1$ contains a $(\le\!4)$-cycle $T$ such that $d(T, q_i)\le B - r(3)$ for $0\le i\le 3$.
\end{itemize}
Then $G_2$ is a $5$-cycle, and hence $q_0$ and $q_3$ have a common neighbor with list of size two (equal to $q_4$
if $Q$ is a $4$-chord).}
\begin{proof}
Let $\varphi$ be an $L$-coloring of $G_1$ that exists by the minimality of $G$.
Let $L_2$ be the list assignment such that $L_2(q_i)=\{\varphi(q_i)\}$ for $0\le i\le 3$,
if $Q$ is a $4$-chord, then $L_2(q_4)=\{\varphi(q_3),\varphi(q_4)\}$,
and $L_2(x)=L(x)$ for $x\not\in V(Q)$.
The graph $G_2$ is not $L_2$-colorable.  Furthermore, we have $d_{G_2}(q_0q_1q_2q_3)\ge r(q_0q_1q_2q_3)$, since
either $\ell(P)\ge 4$ and $d_{G_2}(q_0q_1q_2q_3)+(r(4)-r(3))\ge d(P)\ge r(P)$, or $d_{G_2}(q_0q_1q_2q_3)+(B - r(3))\ge B$.
By the minimality of $G$, we conclude that $G_2$ violates~(Q), hence a vertex $x$ with a list of size two is adjacent to
both $q_0$ and $q_3$.  Furthermore, by \refclaim{cl-inter} and \refclaim{cl-schords}, $G_2$ is equal to the $5$-face $q_0q_1q_2q_3x$.
\end{proof}

Let $P=p_0p_1\ldots p_m$, where $m=\ell(P)$.
We can assume that $m\ge 2$; otherwise, we can color $2-m$ vertices adjacent to $P$ in $C$ so that
the resulting list assignment $L'$ either still satisfies the assumptions of Theorem~\ref{thm-maingen} or violates~(OBSTa).
But, in the latter case, \refclaim{cl-inter} and \refclaim{cl-schords} would imply that $G$ with the list assignment $L'$ is equal
to one of the obstructions in Figure~\ref{fig-obsta}.  Then it is easy to see that $G$ either would be $L$-colorable or contain OBSTx1,
which is a contradiction.

A subgraph $H$ of $G$ is a {\em near-obstruction} if for one of the relevant configurations $(H',Q,f)$ drawn in Figure~\ref{fig-obsta} or \ref{fig-obstb},
there exists an isomorphism $\pi$ from $H'$ to $H$ such that $\pi(Q)=H\cap P$ and $|L(\pi(v))|\ge f(v)$ for every $v\in V(H')\setminus V(Q)$.
This near-obstruction is {\em tame} when every vertex $v\in V(H')\setminus V(Q)$ such that $f(v)=2$ and $\pi(v)$ has a neighbor in $P$
satisfies $\pi(v)\in V(C)$.

\claim{cl-notame}{The graph $G$ contains no tame near-obstruction.}
\begin{proof}
Suppose that $H$ is a tame near-obstruction in $G$, and let $K$ be the cycle bounding the outer face of $H$.
Let $Q_0=q_0q_1\ldots q_k$ be the subpath of $K$ vertex-disjoint with $P$ such that $V(K)\subseteq V(Q_0)\cup V(P)$.

Suppose first that both $q_0$ and $q_k$ are adjacent to an endvertex of $P$, say $q_0$ to $p_0$ and $q_k$ to $p_m$;
by the assumption that $d(P)\ge r(P)$ and that $H$ is tame and by \refclaim{cl-nochords}, this is the case unless
$H$ is OBSTx1 and $\ell(P)=2$.  Let $Q$ be the path consisting of $Q_0$ and those of the edges $q_0p_0$ and
$q_kp_m$ that do not belong to $C$.

Note that $|V(H)|<|V(G)|$, since otherwise either $G$ violates~(OBSTa) or~(OBSTb), or is $L$-colorable.
Let $G'=G-(V(H)\setminus V(Q))$.  Note that $G'$ is connected by \refclaim{cl-inter}.
By the minimality of $G$, the graph $H$ is $L$-colorable.  Let $\varphi$ be an $L$-coloring of $H$, and
let $L'$ be the list assignment such that $L'(x)=\{\varphi(x)\}$ if $x\in V(Q)$ and $L'(x)=L(x)$ otherwise.
Note that $G'$ is not $L'$-colorable, and by the minimality of $G$, it cannot satisfy the assumptions of Theorem~\ref{thm-maingen}.
But, clearly $G'$ satisfies~(I) and~(T).  Let us now discuss several cases; we always assume that the precolored
vertices of the drawing of $H$ in Figure~\ref{fig-obsta} or \ref{fig-obstb} are labeled from left to right, i.e.,
$p_0$ is the the leftmost precolored vertex in the drawing.

\begin{itemize}
\item {\bf $H$ is OBSTx2a or OBSTx2b:} Since $q_1p_2$ is not a chord by \refclaim{cl-nochords}, we have $q_1\not\in V(C)$.
By \refclaim{cl-schords}, the $2$-chord $q_0q_1p_2$ splits off one of the relevant configurations drawn in Figure~\ref{fig-obsta};
let $H'$ be its image.  Since $V(H)\neq V(G)$, $H'$ is not OBSTx1.
Since $H\subseteq G$, we have that $q_1$ has degree at least three in $H'$ and that $q_1$, $p_2$
and two vertices of a triangle are incident with a common $5$-face in $H'$.
This implies that $H'$ is OBSTa1, OBSTa3 or OBSTx4.  However, then $q_0$ is adjacent to a vertex with list
of size two in $H'$, and thus $|L(q_0)|=3$.  It follows that the $5$-cycle $p_0p_1p_2q_1q_0$ has at least
two $L$-colorings, and at least one of them extends to $H'$.  Therefore, $G$ is $L$-colorable, which is a contradiction.

\item {\bf $\mathbf{\ell(Q)\le 5}$:}  Since $t(G)\ge B$ and $d(P)\ge r(P)$ if $\ell(P)\ge 3$, no vertex of $Q$ is contained in a $(\le\!4)$-cycle.
The inspection of the graphs depicted in Figures~\ref{fig-obsta} and \ref{fig-obstb} shows that among any three consecutive
internal vertices of $Q$, at least one has degree two in $H$.  This implies that $Q$ is an
induced path in $G$, since otherwise by \refclaim{cl-inter}, $G$ would contain a vertex of degree two with list of size three.
Similarly, we conclude that in $G$, no vertex with list of size two has two neighbors in $Q$, unless $H$ is OBSTa1
(or OBSTx2a, but that was already excluded).
However, if $H$ is OBSTa1 and $q_0$ and $q_3$ have a common neighbor $x$ with list of size two, then \refclaim{cl-inter} and \refclaim{cl-schords}
imply that $V(G)=V(H)\cup \{x\}$, and it is easy to see that $G$ is $L$-colorable.
We conclude that $G'$ satisfies~(S3) and~(Q).

Let us discuss several subcases regarding $m$:

\begin{itemize}
\item $m=2$: That is, $H$ is one of the obstructions drawn in Figure~\ref{fig-obsta}, except for OBSTa5, OBSTx1 or OBSTx3
(or OBSTx2a or OBSTx2b, which were already excluded).  Note that in all these cases, $\ell(Q)\le4$.
It follows that $G'$ violates~(OBSTb).  Since $H$ contains a triangle whose distance from any vertex of $Q$ is at most three, $G'$ satisfies $d(Q)\ge r(Q)$.  
Therefore, $\ell(Q)=4$, $H$ is OBSTa3, OBSTa4, OBSTa6, OBSTa7, OBSTx1a, OBSTx1b or OBSTx4 and $G'$ is OBSTb1 or OBSTb2.
Since $G$ does not contain a vertex of degree two with list of size three, if $G'$ is OBSTb2, then $H$ is OBSTa7.
The case analysis of the possible combinations of $H$ and $G'$ shows that $G$ is $L$-colorable, which is a contradiction.

\item $m=4$: The case that $H$ is OBSTb1 is excluded by \refclaim{cl-schords}, since $d(P)\ge d(T)$, thus $H$ is OBSTb2.
\refclaim{cl-schords} furthermore implies that $|L(q_2)|=3$, and thus we may choose the $L$-coloring $\varphi$ so that $\varphi(q_1)\not\in L(q_0)\setminus L(p_0)$.
Let $L''$ be the list assignment defined by $L''(q_0)=(L(q_0)\setminus L(p_0))\cup \{\varphi(q_1)\}$ and $L''(x)=L'(x)$
otherwise.  Note that only a path $q_1q_2q_3q_4$ of length three is precolored in $G'$ according to this list assignment and
$d(q_1q_2q_3q_4)\ge d(P)-3\ge r(P)-3\ge r(q_1q_2q_3q_4)$ and thus $G'$ is $L''$-colorable.  This gives an $L$-coloring of $G$,
which is a contradiction.

\item $m=5$: By \refclaim{cl-2pchord}, $H$ cannot be OBSTb3 or OBSTb4.  Thus, $H$ is
OBSTb1a, OBSTb1b, OBSTb2a, OBSTb2a', OBSTb2b or OBSTb5, and $\ell(Q)\le4$.
We conclude that $G'$ is OBSTb1 or OBSTb2 and $\ell(Q)=4$ (excluding the cases that
$H$ is OBSTb1a or OBSTb1b).  Note that $q_2$ has degree two in $H$,
and since it has degree at least three in $G$, we conclude that $G'$ is OBSTb1.
The case analysis of the possible combinations of $H$ and $G'$ shows that $G$ is $L$-colorable, which is a contradiction.
\end{itemize}

\item {\bf $\mathbf{\ell(Q)>5}$:} Thus, $H$ is OBSTa5, OBSTx3 or OBSTb6.  Let us discuss these cases separately:

\begin{itemize}
\item $H$ is OBSTa5:  Let $w$ be the common neighbor of $q_1$ and $q_6$, and $w'$ the common neighbor of
$w$, $q_3$ and $q_4$.  If there exist colors
$c_1\in L(q_1)\setminus (L(q_0)\setminus L(p_0))$ and $c_2\in L(q_6)\setminus (L(q_7)\setminus L(p_2))$
so that $L(w)\neq L(p_1)\cup \{c_1,c_2\}$, then consider the graph $G_1=G-V(P)$ with the list assignment
$L_1$ such that $L_1(q_1)=\{c_1\}$, $L_1(q_6)=\{c_2\}$, $L_1(w)$ chosen as an arbitrary one-element subset of $L(w)\setminus (L(p_1)\cup \{c_1,c_2\})$,
$L_1(q_0)=(L(q_0)\setminus L(p_0))\cup \{c_1\}$, $L_1(q_7)=(L(q_7)\setminus L(p_2))\cup \{c_2\}$ and $L_1(x)=L(x)$ otherwise.
The graph $G_1$ (with the precolored path $q_1wq_6$) cannot be $L_1$-colorable, thus it violates~(OBSTa).  This is only possible if $G_1$ contains OBSTa1, but then
$V(G)=V(H)$ and thus $G$ is $L$-colorable.

So, we have $|L(q_0)|=|L(q_7)|=3$, $L(q_1)=(L(q_0)\setminus L(p_0))\cup \{c_1\}$, $L(q_6)=(L(q_7)\setminus L(p_2))\cup \{c_2\}$ and
$L(w)=L(p_1)\cup \{c_1,c_2\}$.
Let $\psi$ be an $L$-coloring of $q_1q_0p_0p_1p_2q_7q_6$ such that $\psi(q_1), \psi(q_6)\not\in L(w)\setminus L(p_1)$.
Let $G_2=G-(V(P)\cup\{w'\})$, with the list assignment $L_2$ such that $L_2(x)=\{\psi(x)\}$ for $x\in \{q_0,q_1,q_6,q_7\}$, $L_2(w)$ is an
arbitrary singleton list disjoint with $L_2(q_1)$ and $L_2(q_6)$, and $L_2(x)=L(x)$ otherwise.  
Since an $L_2$-coloring of $G_2$ corresponds to an $L$-coloring of $G$
(choosing a color of $w'$ different from the colors of $q_3$ and $q_4$, then a color of $w$ different from the colors of $p_1$ and $w'$),
we have that $G_2$ is not $L_2$-colorable.  By \refclaim{cl-inter}, $G_2$ satisfies~(S3) and~(Q).
Since $d_G(q_3q_4w')\ge B$, we have $d_{G_2}(q_0q_1wq_6q_7)\ge B-3\ge r(q_0q_1wq_6q_7)$.
The internal face of $G_2$ incident with $w$ has length at least six, and thus $G_2$ satisfies~(OBSTb).
Therefore, $G_2$ is a counterexample to Theorem~\ref{thm-maingen} smaller than $G$, which is a contradiction.

\item $H$ is OBSTx3: Let $q_1w_1w_2q_3$ be the path in $H$ such that $w_1,w_2\neq q_2$.
If $|L(q_0)|=2$, then consider an $L$-coloring $\psi$ of the subgraph of $G$ induced by $\{q_0, q_1,w_1,w_2,p_0, p_1\}$
such that $\psi(w_2)\not\in L(q_7)\setminus L(p_2)$.  Let $L'$ be the list assignment defined by
$L'(q_0)=\{\psi(q_0),\psi(q_1)\}$, $L'(x)=\{\psi(x)\}$ for $x\in\{q_1,w_1,w_2\}$, $L'(q_7)=(L(q_7)\setminus L(p_2))\cup \{\psi (w_2)\}$
and $L'(x)=L(x)$ otherwise.  We conclude that $G-V(P)$ with the precolored path $q_1w_1w_2$ is not $L'$-colorable, thus it violates~(OBSTa).
Note that $w_1$ has degree two in $G-V(P)$ and the face with which it is incident does not share any
vertex with a triangle, and that $q_7$ is not incident with a triangle, thus $G-V(P)$ contains OBSTx2a.
By \refclaim{cl-inter} and \refclaim{cl-schords}, $G-V(P)$ is equal to OBSTx2a.  However, then $q_2$, $q_5$ and $q_7$ have lists
of size two and $G$ contains OBSTx3, which is a contradiction.

So, we have $|L(q_0)|=3$.  Then, there exist
$c_1\in L(q_1)\setminus (L(w_1)\setminus L(p_1))$ and $c_0\in L(q_0)\setminus L(p_0)$ such that $c_0\neq c_1$.
Let $G_1$ be the graph obtained from $G-\{p_0,p_1, w_1,w_2\}$ by adding the edge
$q_1p_2$.  Let $c$ be a color that does not appear in any of the lists of $G$.
Let $L_1$ be the list assignment such that $L_1(q_0)=\{c_0\}$, $L_1(q_1)=\{c_1\}$,
$L_1(p_2)=\{c\}$, $L_1(q_7)=(L(q_7)\setminus L(p_2))\cup \{c\}$ and $L_1(x)=L(x)$ for all other vertices of $G_1$.
Observe that $G_1$ with the precolored path $q_0q_1p_2$ is not $L_1$-colorable.  Furthermore, the distance of $q_1$ from the triangle
$q_4q_5q_6$ is three both in $G$ and $G_1$, and the distance of $q_1$ and $q_7$ to any other $(\le\!4)$-cycle
is at least $B-3$, thus $t(G_1)\ge B$.  The internal face $F$ of $G_1$ incident with $q_1p_2$ has length at least six,
as otherwise the cycle $F-q_1p_2+q_1w_1p_1p_2$ has length at most seven and contradicts \refclaim{cl-inter}.
Furthermore, observe that neither $q_0$ nor $q_1$ is adjacent to a vertex of the triangle $q_4q_5q_6$,
thus $G_1$ contains neither OBSTx1 nor OBSTx1a.  It follows that $G_1$ satisfies~(OBSTa), and thus it is a counterexample to Theorem~\ref{thm-maingen} smaller than $G$.
This is a contradiction.

\item $H$ is OBSTb6: Let $q_1w_1w_2p_3$ be the path in $H$
with $w_1$ adjacent to $p_1$.  If $|L(q_6)|=2$, then let $c'$ be the unique color in $L(q_6)\setminus L(p_5)$, and note that there exists
$c\in L(q_5)\setminus (L(p_3)\cup \{c'\})$.  Let $G_1=G-\{p_4,p_5\}$ and let $L_1$ be the list assignment such that
$L_1(q_5)=\{c\}$, $L_1(q_6)=\{c,c'\}$ and $L_1(x)=L(x)$ for $x\not\in\{q_5,q_6\}$.  Note that $G_1$ with the precolored path $p_0p_1p_2p_3q_5$ is not $L_1$-colorable,
and since $H$ is a subgraph of $G$, we
conclude that $G_1$ contains OBSTb2.  However, this implies that $G$ contains OBSTb6, which is a contradiction.

Therefore, $|L(q_6)|=3$.  Then, there exists an $L$-coloring $\psi$ of the subgraph of $G$ induced by $\{q_3,q_4,q_5,q_6, p_3, p_5\}$ such that
$\psi(q_3)\not\in L(w_2)\setminus L(p_3)$.  Let $G_2$ be the graph obtained from $G-(V(P)\cup\{w_1,w_2\})$ by adding a vertex $w$
adjacent to $q_0$ and $q_3$.  Let $c$ be a new color that does not appear in $L(q_0)\cup L(q_3)$.
Let $L_2$ be the list assignment such that $L_2(x)=\psi(x)$ for $x\in\{q_3, q_4, q_5, q_6\}$, $L_2(w)=\{c\}$,
$L_2(q_0)=(L(q_0)\setminus L(p_0))\cup \{c\}$ and $L_2(x)=L(x)$ otherwise.  Observe that an $L_2$-coloring of $G_2$
corresponds to an $L$-coloring of $G$, thus $G_2$ is not $L_2$-colorable.
Furthermore, a path $P_2=wq_3q_4q_5q_6$ of length $4$ is precolored in $G_2$.
Let us remark that the newly added vertex $w$ is not incident with a $(\le\!4)$-cycle, as otherwise either $t(P)<r(P)$ in $G$, or
\refclaim{cl-inter} implies that $q_2$ is a vertex of degree two with list of size three.  Furthermore, $t(G_2)\ge B$,
since only the added path $q_0wq_3$ could result in shortening the distance between $(\le\!4)$-cycles, and we have
$d_G(q_0)\ge d(P)-1\ge r(P)-1$ and $d_G(q_3)\ge d(P)-2\ge r(P)-2$, and $2r(P)-1 > B$.  Also, $d_{G_2}(P_2)\ge d_G(P)-2\ge r(P_2)$.

Note that $G_2$ satisfies~(S3), since $w$ is not adjacent to $q_6$ and $d(P)\ge r(P)$.  Similarly,
$G_2$ satisfies~(Q), since otherwise \refclaim{cl-inter} would imply that $q_4$ is a vertex of degree two with list of size three.
Hence, $G_2$ violates~(OBSTb).  Since $q_4$ has degree at least three, $G_2$ contains OBSTb1.  But then $q_4$ and $q_0$
have a common neighbor $x$, and the existence of $q_2$ together with $d(P)\ge r(P)$ contradicts \refclaim{cl-inter}
applied to the $7$-cycle $q_0q_1w_1w_2q_3q_4x$.
\end{itemize}
\end{itemize}

Therefore, we have excluded all the cases where both $q_0$ and $q_k$ are adjacent to an endvertex of $P$.
Finally, let us consider the case that say $q_0$ is not adjacent to an endvertex of $P$, that is, $\ell(P)=2$,
$H$ is OBSTx1, $q_0$ is adjacent to $p_1$ and $q_3$ is adjacent to $p_2$.  An $L$-coloring of $H$ does not
extend to an $L$-coloring of the subgraph $G'$ that is split off by the path $p_0p_1q_0q_1q_2q_3$.  If $p_0$
and $q_1$ have a common neighbor with list of size two, then either $G$ is $L$-colorable
or contains OBSTa1.  Otherwise, $G'$ satisfies~(S3) and~(Q), as $q_1$ cannot be a vertex of degree two
with list of size three.  Therefore, $G'$ violates~(OBSTb).  If $|L(q_3)|=2$, then $G'$ may only be OBSTb1, OBSTb1b, OBSTb2 or OBSTb2b.
The cases OBSTb1 and OBSTb1b are excluded, since $q_1$ must have degree at least three; if $G'$ is OBSTb2, then $G$ is $L$-colorable,
and if $G'$ is OBSTb2b, then $G$ contains OBSTa3.  If $|L(q_3)|=3$, then there exist $L$-colorings $\psi_1$ and $\psi_2$ of
$H$ such that $\psi_1(q_0)=\psi_2(q_0)$, $\psi_1(q_1)\neq \psi_2(q_1)$, $\psi_1(q_2)\neq\psi_2(q_2)$ and $\psi_1(q_3)\neq\psi_2(q_3)$.
The inspection of the graphs in Figure~\ref{fig-obstb} shows that at least one of $\psi_1$ and $\psi_2$ extends to
an $L$-coloring of $G'$, unless $G'$ contains a subgraph $H'$ isomorphic to OBSTb1, OBSTb1a, OBSTb1b, OBSTb3 or OBSTb5.
By \refclaim{cl-inter} and \refclaim{cl-schords} we conclude that $G'=H$ and $G=H\cup H'$.  However, all possible combinations
of $H$ and $H'$ result in an $L$-colorable graph, which is a contradiction.
\end{proof}

Let $v_1$, $v_2$, \ldots, $v_s$ be the vertices of $C-V(P)$ labeled so that $C=p_0\ldots p_mv_1v_2\ldots v_s$, 
where $s=\ell(C)-m-1$.  Let us also define $v_0=p_m$.

\claim{cl-nopchord}{For $1\le i\le 4$, if the edge $v_{i-1}v_i$ is not contained in a cycle of length at most $4$
and a vertex $v\in V(G)$ is adjacent to both $v_i$ and an endvertex $p$ of $P$, then $v\in V(C)$.}
\begin{proof}
Suppose that $v\not\in V(C)$.  Let $G_2$ be the subgraph of $G$ that is split off by the $2$-chord $v_ivp$ according to \refclaim{cl-schords},
and $G_1=G-(V(G_2)\setminus \{v_i,v,p\})$.  If $p=p_m$, then $i\in \{3,4\}$, since $v_{i-1}v_i$ does not belong to a $(\le\!4)$-cycle. 
By \refclaim{cl-inter} and the fact that every vertex of degree two has list of size two, we have that
$i=4$ and $G_2$ contains a triangle.  It follows that $m\le 2$.  Consider an $L$-coloring $\psi$ of $G_2$,
and let $L_1$ be the list assignment such that $L_1(v)=\{\psi(v)\}$, $L_1(v_4)=\{\psi(v_i)\}$ and $L_1(x)=L(x)$ for
$x\not\in\{v,v_4\}$.  Note that $G_1$ with the precolored path $p_0\ldots p_mvv_i$ is not $L_1$-colorable.
By \refclaim{cl-nochords}, \refclaim{cl-length}, \refclaim{cl-schords} and the assumption that
$v\not\in V(C)$, we conclude that $G_1$ satisfies~(S3) and~(Q).  It follows that $G_1$ violates (OBSTb),
and by \refclaim{cl-inter} and \refclaim{cl-schords}, $G_1$ is equal to OBSTb1 or OBSTb2.
However, all combinations of OBSTb1 or OBSTb2 with a $p_mv_1v_2v_3v_4v$-obstacle are $L$-colorable.

Let us now consider the case that $p=p_0$.  Since a $(\le\!4)$-cycle in $G_2$ is within distance at most $4$ from $P$, we have $\ell(P)=2$.
Let $K$ be the cycle of length at most $8$ formed by the $2$-chord $v_ivp_0$, the path $P$, and the vertices $v_1$, $v_2$, \ldots, $v_i$.  Since
$t(G)\ge B$, $G_1$ cannot be a $K$-obstacle, and if $K$ is not a face, then $\ell(K)=8$ and $K$ has a chord splitting $K$ into two $5$-faces.
If $K$ is not a face, then since each vertex with list of size three has degree at least three, we conclude that
$|L(v_1)|=|L(v_3)|=2$, $|L(v_2)|=3$ and the chord of $K$ is $v_2p_0$.  However, this contradicts \refclaim{cl-nochords}.
Therefore, $K$ is a face.  Since $v$ has degree at least three, $G_2$ is not a face.  Furthermore, $G_2$ is not~OBSTx1b, thus $|L(v_i)|=3$.  Hence,
there exist $L$-colorings $\psi_1$ and $\psi_2$ of $K$ such that $\psi_1(v)\neq \psi_2(v)$ and $\psi_1(v_i)\neq \psi_2(v_i)$.
The inspection of the graphs in Figure~\ref{fig-obsta} shows that at least one of $\psi_1$ and $\psi_2$ extends to
an $L$-coloring of $G_2$, giving an $L$-coloring of $G$.  This is a contradiction.
\end{proof}

\claim{cl-nopchordm}{Suppose that $m=5$.  For $1\le i\le 4$,
if a vertex $v\in V(G)$ is adjacent to both $v_i$ and to $p\in\{p_1,p_4\}$, then $v\in V(C)$,
unless $p=p_4$ and $i=2$, or $p=p_1$ and $i=s-1$.}
\begin{proof}
Suppose that $v\not\in V(C)$ is adjacent to $p_4$ and $v_i$.  Since $d(P)\ge r(P)$ and every vertex with list
of size three has degree at least three, \refclaim{cl-inter} implies that $i=2$.

Hence, assume that $v\not\in V(C)$ is adjacent to $p_1$ and $v_i$.  Let $Q=p_0p_1vv_i$, let $G_1$ be
the subgraph of $G$ drawn in the cycle bounded by $vp_1\ldots p_5v_1\ldots v_i$ and $G_2=G-(V(G_1)\setminus V(Q))$.
By the minimality of $G$, there exists an $L$-coloring $\varphi$ of $G_1$.  Let $L_2$ be the list assignment
such that $L_2(x)=\varphi(x)$ for $x\in\{v,v_i\}$ and $L_2(x)=L(x)$ otherwise; the graph $G_2$ cannot be $L_2$-colorable.
Since only an induced path $Q$ of length three is precolored in $G_2$ (and $d_{G_2}(Q)\ge d_G(P)-2\ge r(P)-2\ge r(Q)$), we
conclude that $G_2$ violates~(Q), thus there exists a vertex $w$ with list of size two adjacent to $p_0$ and $v_i$.
By \refclaim{cl-nochords}, we have $C=p_0p_1\ldots p_5v_1\ldots v_iw$, and thus $i=s-1$.
\end{proof}

\claim{cl-nosimt}{If $v_i$ has degree two and is incident with a triangle, then $i\ge 4$.
Furthermore, if $4\le i\le 6$, $v_i$ has degree two and is incident with a triangle, then $|L(v_{i+2})|\neq 2$.}
\begin{proof}
Suppose first that $v_i$ is such a vertex, with $1\le i\le 3$.  Clearly, this is only possible if $\ell(P)=2$.
By the minimality of $G$, the subgraph $G_0$ of $G$ induced by $V(P)\cup \{v_1,\ldots, v_{i+1}\}$ has an $L$-coloring $\psi$.
Let $L'$ be the list assignment such that $L'(x)=\{\psi(x)\}$ for $x\in \{v_1,\ldots, v_{i+1}\}$ and
$L'(x)=L(x)$ otherwise, and let $Q=p_0p_1p_2v_1\ldots v_{i-1}v_{i+1}$.  Let $G'=G-v_i$.  Then, $G'$ is not $L'$-colorable.
Furthermore, by \refclaim{cl-nochords} and \refclaim{cl-length}, $G'$ with the precolored path $Q$ satisfies~(Q).
Note that $d_{G'}(Q)\ge d_G(v_{i-1}v_iv_{i+1})-4\ge B-4\ge r(Q)$.
Observe that $G'$ violates~(OBSTb),
and by \refclaim{cl-inter} and \refclaim{cl-schords}, $G'$ is equal to one of the graphs drawn in Figure~\ref{fig-obstb}.
If $i=2$, then either $G'$ is OBSTb1 and thus $G$ contains OBSTx2b, or $G'$ is OBSTb2 and $G$ is $L$-colorable.
Therefore, $i=3$.  If $|L(v_1)|=3$, then we can assume that $\psi(v_2)\not\in L(v_1)\setminus L(p_2)$, thus there exist two
$L$-colorings of the subgraph of $G_0$ that differ only in the color of $v_1$.
Furthermore, the degree of $v_1$ in $G'$ is at least three.  The inspection of the graphs drawn in Figure~\ref{fig-obstb} shows that at least one of these colorings extends to $G'$,
which is a contradiction.  If $|L(v_1)|=2$, then by~(T) we have that $v_{i+1}$ has no neighbor with list
of size two in $G'$; hence, either $G'$ is OBSTb1b and $G$ contains OBSTx2a, or
$G'$ is OBSTb2b and $G$ contains OBSTx3.

Suppose now that $4\le i\le 6$ and $|L(v_{i+2})|=2$.  Again, $m=2$.  By~(T), $|L(v_{i-2}|=3$, and by \refclaim{cl-nochords},
$Q_0=p_0p_1p_2v_1\ldots v_{i-1}v_{i+1}$ is an induced path.  Thus, there exists
an $L$-coloring $\psi$ of this path such that $L(v_i)\neq \{\psi(v_{i-1}),\psi(v_{i+1})\}$ and $\psi(v_{i+1})\not\in L(v_{i+2})$.
Let $G'=G-\{v_{i-1},v_i, v_{i+1}\}$ with the list assignment $L'$ such that $L'(v_j)=\{\psi(v_j)\}$ for $1\le j\le i-3$,
$L'(v_{i-2})=\{\psi(v_{i-3}),\psi(v_{i-2})\}$, $L'(x)=L(x)\setminus\{\psi(y)\}$ for a vertex $x\in V(G')$ with a neighbor
$y\in \{v_{i-1},v_{i+1}\}$ and $L'(x)=L(x)$ otherwise.  The graph $G'$ is not $L'$-colorable.  Furthermore, by
\refclaim{cl-nochords} and \refclaim{cl-schords}, $G'$ with the list assignment $L'$ and 
the precolored path $Q=p_0p_1p_2v_1\ldots v_{i-3}$ satisfies (S2) and (I).  Let $w$ be a common neighbor
of two vertices of $Q$.  By \refclaim{cl-nochords}, we have $w\neq v_{i-2}$
and $|L(w)|=3$.  Furthermore, $|L'(w)|=3$, since otherwise $w$ would be adjacent to $v_{i-1}$ or $v_{i+1}$ as
well, and \refclaim{cl-inter} would imply that $v_{i-2}$ has degree two in $G$.  This shows that~(Q) is true.
Note that $d_{G'}(Q)\ge B-7 > r(P)$ and $G'$ violates~(OBSTb).  This implies that $i\ge 5$. 

Note that $v_{i-4}$ is not adjacent to a vertex $x$ with $|L'(x)|=2$ and
that $v_{i-2}$ is the only such vertex adjacent to $v_{i-3}$, by \refclaim{cl-nochords}, \refclaim{cl-inter} and the fact that $v_{i-2}$ has degree at least three in $G$.
Also, observe that there exist
at least two possible choices of the coloring $\psi$ that differ in the colors of $v_{i-2}$ and $v_{i-3}$,
may also differ on $v_{i-4}$, but match on all other vertices of $Q_0$ (and thus the list assignments
derived from them as in the definition of $L'$ differ only in the lists of $v_{i-2}$, $v_{i-3}$ and possibly $v_{i-4}$).
Since neither of these colorings extends to $G'$,
an inspection of the graphs depicted in Figure~\ref{fig-obstb} shows that $i=6$ and $G'$ contains OBSTb3.
If $v_8$ is adjacent to $p_0$, then $G$ contains OBSTx3.  Otherwise, \refclaim{cl-nochords} and \refclaim{cl-schords} imply that the edge
of OBSTb3 incident with $v_{i-2}$ (distinct from $v_{i-3}v_{i-2}$) is a chord of $C$ that splits off OBSTx1 in $G$; however, the resulting graph is $L$-colorable.
\end{proof}

\claim{cl-lv12l}{We have $|L(v_1)|=2$ or $|L(v_2)|=2$.}
\begin{proof}
Suppose that $|L(v_1)|=|L(v_2)|=3$.  Let $L'$ be the list assignment such that $L'(v_1)=L(v_1)\setminus L(p_m)$
and $L'(x)=L(x)$ otherwise.  Let $G'=G-p_mv_1$.  By \refclaim{cl-nochords}, $G'$ with the list assignment $L'$ (and the precolored path $P$)
satisfies~(I).
Suppose that~(T) is violated.  Then there exists a triangle $w_1w_2w_3$ such that either $v_1=w_2$ and both $w_1$ and $w_3$
have a neighbor with list of size two, or $|L(w_2)|=2$, $w_1$ is adjacent to $v_1$ and $w_3$ has a neighbor $w$ distinct from
$w_1$ with list of size two.  By \refclaim{cl-schords}, the former is not possible, and in the latter case,
we have $w_1=v_2$, $w_2=v_3$ and $w_3=v_4$.  However, that contradicts \refclaim{cl-nosimt}.  Therefore, (T) holds.
Furthermore, by \refclaim{cl-nochords}, $v_1$ is not adjacent to any vertex of $P$ other than $p_m$, and thus~(Q) is satisfied.
Since an $L'$-coloring of $G'$ would give an $L$-coloring of $G$, it follows that
$G'$ with the assignment $L'$ violates~(OBSTa) and~(OBSTb).  However, this implies that $G$ with the list
assignment $L$ contains a tame near-obstruction $H$, contradicting \refclaim{cl-notame}.
\end{proof}

\claim{cl-5imp10}{If $\ell(P)=5$, then $\ell(C)\ge 10$.}
\begin{proof}
By \refclaim{cl-length}, we have $\ell(C)\ge 9$.  Suppose that $\ell(C)=9$.  By \refclaim{cl-lv12l}, either $|L(v_1)|=2$ or $|L(v_2)|=2$.
Applying \refclaim{cl-lv12l} symmetrically on the other end of $P$, we also have that $|L(v_2)|=2$ or $|L(v_3)|=2$.  Therefore,
either $|L(v_1)|=|L(v_3)|=2$ and $|L(v_2)|=3$, or $|L(v_1)|=|L(v_3)|=3$ and $|L(v_2)|=2$.  In the former case, $L$-color the path $v_1v_2v_3$ so that $v_1$
gets a color different from the color of $p_5$ and $v_3$
a color different from the color of $p_0$.  Let $G'=G-\{v_1,v_2,v_3\}$, with the list assignment $L'$ obtained
from $L$ by removing the colors of the vertices $v_1$, $v_2$ and $v_3$ from the lists of their neighbors.
Note that $G'$ satisfies~(I), since otherwise $v_1v_2v_3$ would be a part of a $5$-cycle, and by \refclaim{cl-inter},
$v_2$ would have degree two.  Furthermore, (T) is satisfied since $d(P)\ge r(P)$ and~(Q) is satisfied by
\refclaim{cl-2pchord}.  Note also that no vertex adjacent to $p_0$ or $p_5$ has list of size $2$, thus $G'$ satisfies~(OBSTb). 
This is a contradiction, since an $L'$-coloring of $G'$ corresponds to an $L$-coloring of $G$.

In the latter case, let $G'$ be the graph with list assignment $L'$ obtained from $G$ by coloring $v_2$ from its
list arbitrarily, removing $v_2$ and removing its color from the lists of its neighbors.  Again,
(I), (T) and~(Q) are obviously satisfied by $G'$.  Furthermore, since $d(P)\ge r(P)$, the distance between any
pair of vertices of $G'$ with list of size two is at least three.  This implies that $G'$ satisfies~(OBSTb),
unless it contains OBSTb1b.  However, that is excluded by \refclaim{cl-2pchord}.
\end{proof}

Let $X$ be the set of vertices defined as follows.
If $|L(v_1)|=3$ (and thus $|L(v_2)|=2$ by \refclaim{cl-lv12l} and $|L(v_3)|=3$)
and $|L(v_4)|=3$, then $X=\{v_2\}$.  If $|L(v_1)|=3$ and $|L(v_4)|=2$, then $X=\{v_2,v_3\}$.
If $|L(v_1)|=2$ (and thus $|L(v_2)|=3$) and $|L(v_3)|=3$, then $X=\{v_1\}$.  If $|L(v_1)|=|L(v_3)|=2$ (and thus $|L(v_4)|=3$)
and $v_5=p_0$ or $|L(v_5)|=3$, then $X=\{v_2,v_3\}$.  Otherwise, $X=\{v_2,v_3,v_4\}$.

\claim{cl-nonearchords}{Let $Q=q_0q_1\ldots q_k$ be a $k$-chord of $C$ such that no endvertex of $Q$ is an internal vertex of $P$
and $Q$ does not split off a face.
If $k\le 2$, or if $k=3$ and $q_3$ has list of size two, then $q_0\not\in X$.}
\begin{proof}
We exclude a slightly more general case.  Suppose that $Q$ is a $k$-chord of
$C$ such that no endvertex of $Q$ is an internal vertex of $P$ and $Q$ does not split off a face,
such that either $k\le 2$, or $k=3$ and $q_3$ has list of size two.  Suppose furthermore that
either $q_0\in X$, or $q_0=v_1$ and $q_k\not\in \{v_2,v_3,v_4\}$.

Let $G_2$ be the subgraph of $G$ that is split off by $Q$ and $G_1=G-(V(G_2)\setminus V(Q))$.  Let $Q$ be chosen so
that $G_2$ is as large as possible.  Let $i$ be the index such that $v_i=q_0$.
By \refclaim{cl-schords} we can assume that $\ell(P)=2$, since otherwise $G_2$ contains a triangle whose distance
from $q_0$ is at most four, hence its distance from $P$ is at most $8$, contradicting $d(P)\ge r(P)$.

By \refclaim{cl-nochords} and \refclaim{cl-nosimt}, the path consisting of $P$ and $v_1v_2v_3v_4$ is induced.
We now distinguish two cases regarding whether $q_k$ belongs to $\{v_1,v_2,v_3,v_4\}$ or not.
\begin{itemize}
\item Let us first consider the case that
$q_k\in \{v_1,v_2,v_3,v_4\}$, and let $K$ be the cycle bounded by $Q$ and a subpath of $v_1v_2v_3v_4$.
Since $Q$ does not split off a face, \refclaim{cl-inter} implies that $\ell(K)\ge 6$, thus $k=3$ and $\{q_0,q_k\}=\{v_1,v_4\}$.
If $q_0=v_1\in X$, then $|L(v_1)|=2$ and $|L(v_2)|=|L(v_3)|=3$ by the choice of $X$.
However, \refclaim{cl-inter} implies that $v_2$ or $v_3$ has degree two,
which is a contradiction.

If $q_0=v_4\in X$, then $|L(v_1)|=|L(v_3)|=|L(v_5)|=2$ by the choice of $X$.
Furthermore, \refclaim{cl-inter} and \refclaim{cl-nosimt} imply
that either $v_2q_2\in E(G)$, or $v_2$, $q_2$ and $q_0$ are adjacent to vertices of a triangle $T$.
\begin{itemize}
\item In the former
case, let $\psi_1$ and $\psi_2$ be $L$-colorings of the subgraph of $G$ induced by $V(P)\cup \{v_1,v_2,q_2\}$
such that $\psi_1(v_1)=\psi_2(v_1)$, $\psi_1(v_2)\neq \psi_2(v_2)$ and $\psi_1(q_2)\neq \psi_2(q_2)$,
let $G'=G-v_1v_2$ and let $L_1$ and $L_2$ be the list assignments such that $L_j(x)=\{\psi_j(x)\}$ for $x\in \{v_1,v_2,q_2\}$
and $L_j(x)=L(x)$ otherwise.  Note that $G'$ with the precolored path $p_0p_1p_2v_1q_2v_2$ satisfies~(Q) by \refclaim{cl-schords} and that $G'$ is not $L_j$-colorable for $j\in\{1,2\}$,
thus $G'$ with both of these assignments violates~(OBSTb).  This is only possible if $G'$ contains OBSTb3, but then $G$ contains
OBSTx4.

\item In the latter case, let $t_1$ and $t_2$ be the vertices of $T$ adjacent to $v_2$ and $v_4$, respectively, 
let $\psi$ be an $L$-coloring of $p_mv_1v_2v_3v_4$ such that either $\psi(v_2)\not\in L(t_1)$ or
$L(t_1)\setminus \{\psi(v_2)\}\neq L(t_2)\setminus \{\psi(v_4)\}$,
and let $G'$ be the graph obtained from $G-V(T)$ by identifying $v_2$ with $v_3$ to a new vertex $z$.
Note that $z$ is not contained in a $(\le\!4)$-cycle by \refclaim{cl-inter}, and observe that $t(G')\ge B$.
let $L'$ be the list assignment defined in the following way: $L'(v_i)=\{\psi(v_i)\}$ for $i\in\{1,4\}$,
$L'(z)=\{c\}$ for a new color $c$ that does not appear in any of the lists,
and $L'(x)=L(x)$ for any other vertex $x\in V(G')$.  Observe that $G'$ with the precolored path
$p_0p_1p_2v_1zv_2$ is not $L'$-colorable and satisfies~(Q)
by \refclaim{cl-nochords} and \refclaim{cl-length}, hence $G'$ violates~(OBSTb).  Let $H$ be
the image of the corresponding relevant configuration.  Since $q_1$ has degree at least three,
\refclaim{cl-inter} implies that $v_1zv_4q_1q_2$ is the only cycle of length at most $5$ in $G'$ containing $z$,
and that every cycle of length $6$ containing $z$ also contains $q_1$. It follows that $q_1\in V(H)$.
Note that $H$ is neither OBSTb1b nor OBSTb2b, since in these cases, a $(\le\!3)$-chord
contained in the outer face of $H$ incident with $v_4$ would contradict \refclaim{cl-schords}.
Then, $|L'(q_1)|=3$ implies that $v_5\in V(H)$, thus $v_4$ has degree at least three in $H$.
The only relevant configuration among those drawn in Figure~\ref{fig-obstb} in that an endvertex of the
precolored path has degree greater than two is OBSTb4, however $H$ is not OBSTb4 since $q_1$ is not adjacent to $p_m$.
\end{itemize}

\item Next, consider the case that
$q_k\not\in \{v_1,v_2,v_3,v_4\}$.  By \refclaim{cl-schords}, $G_2$ is one of the graphs depicted in Figure~\ref{fig-obsta}. 
Observe that there exists a color $c\in L(q_0)$ such that every $L$-coloring of $Q$ that assigns $c$ to $q_0$
extends to an $L$-coloring of $G_2$.  Suppose first that there exists an $L$-coloring $\psi$ of the path
$P'=p_0p_1p_2v_1\ldots v_i$ such that $\psi(q_0)=c$.  Let $L_1$ be the list assignment such that
$L_1(x)=\{\psi(x)\}$ for $x\in \{v_1,\ldots, v_{i-1}\}$, $L_1(v_i)=\{\psi(v_i),\psi(v_{i-1})\}$ and $L_1(x)=L(x)$ otherwise.
Note that the path $P_1=P'-v_i$ that is precolored in $G_1$ has length at most $5$.
Furthermore, $G_2$ contains a triangle whose distance from $v_i$ is at most $4$, thus $d_{G_1}(P_1)\ge B-10\ge r(P_1)$.
Since $G$ is not $L$-colorable, $G_1$ is not $L_1$-colorable.  By \refclaim{cl-nochords}, $G_1$ satisfies~(I) and~(Q).
Note that the distance in $G_1$ from $v_i$ to any triangle is at least $B-4>1$, thus $G_1$ satisfies~(T).  We conclude that $G_1$ violates~(OBSTb), and thus
$i\in\{3,4\}$.  The choice of $Q$ (so that $G_2$ is maximal) implies that if $Q'$ is a $(\le 3)$-chord in $G_1$ from a vertex $v_j$ with
$1\le j\le i$ to a vertex $x$ with list of size two, then $Q'$ splits off a face and $x=v_i$.  The inspection
of the graphs in Figure~\ref{fig-obstb} shows that $G_1$ can only satisfy this condition if it contains
OBSTb1, OBSTb1a or OBSTb1b.  However, in these cases \refclaim{cl-inter} and \refclaim{cl-nochords} imply that
either $v_1$ or $v_2$ has list of size three and degree two, which is a contradiction.

Let us now consider the case that there is no $L$-coloring of the path $P'$ assigning the color $c$ to $v_i$.
Since the path $P'$ is induced, this is only possible if $i=1$, or if $i=2$ and $|L(v_1)|=2$. 
If $|L(v_i)|=2$, then $i=1$ and \refclaim{cl-schords} implies that $k=2$ and $G_2$ is OBSTx1b.
However, that is excluded by \refclaim{cl-nosimt}.  Therefore, $|L(v_i)|=3$,
and there exist two $L$-colorings $\psi_1$ and $\psi_2$ of $P'$ such that $\psi_1(v_i)\neq \psi_2(v_i)$.
By the minimality of $G$, we can apply Theorem~\ref{thm-maingen} to $G_1$ with the precolored
path $P'$, and we conclude that $\psi_1$ and $\psi_2$ extend to $L$-colorings $\varphi_1$ and $\varphi_2$ of $G_1$, respectively.
Furthermore,
neither $\varphi_1$ nor $\varphi_2$ extends to an $L$-coloring of $G_2$.  The inspection of the graphs in Figure~\ref{fig-obsta}
shows that this is only possible if $G_2$ is OBSTa1 or OBSTx1c, or if $k=3$ and $G_2$ is OBSTa2 or OBSTx2a.
The case that $G_2$ is OBSTx2a is excluded by \refclaim{cl-nosimt}.
Let us discuss the rest of the cases separately:
\begin{itemize}
\item If $G_2$ is OBSTa1, then there exists a color
$c_1\in L(q_1)\setminus\{\psi_1(q_0)\}$
such that every coloring of $Q$ that assigns $\psi_1(q_0)$ to $q_0$ and $c_1$ to $q_1$ extends to an $L$-coloring of $G_2$.
By \refclaim{cl-schords}, no neighbor of $q_1$ has list of size two.
Let $L'$ be the list assignment such that $L'(v_j)=\{\psi_1(v_j)\}$ for $1\le j\le i$, $L'(q_1)=\{\psi_1(q_0),c_1\}$ and $L'(x)=L(x)$
otherwise.  Note that $G_1$ with the precolored path $p_0p_1p_2v_1\ldots v_i$ is not $L'$-colorable, thus it violates~(Q) or~(OBSTb).  If~(OBSTb) is violated, i.e., $G_1$ contains OBSTb1 or OBSTb2,
then $G$ contains a $(\le\!3)$-chord contradicting the choice of $Q$, thus suppose that~(Q) is false.
Then, \refclaim{cl-schords} implies that $i=2$ and $q_1$ is adjacent to $p_1$.  However,
consider the path $Q'=p_0p_1q_1q_2$ (or $Q'=p_0p_1q_1q_2q_3$ if $k=3$).  Similarly to \refclaim{cl-4chord}, we conclude that $p_0$ and $q_2$ have a common neighbor with list of size two,
and since $q_2$ has degree at least three, this common neighbor is not equal to $q_3$.  However, then $G$ contains OBSTa5.
\item If $G_2$ is OBSTx1c, then by \refclaim{cl-nosimt}, $q_0$ has degree two in $G_2$.  Since neither $\varphi_1$ nor $\varphi_2$ extends to an $L$-coloring of $G_2$,
this implies that $Q$ is a $3$-chord.  Note that there exists an $L$-coloring
$\varphi$ of the path $p_mv_1\ldots v_{i+2}$ such that $\varphi(v_{i+2})\not\in L(q_3)$.  Let $L'$ be the list assignment such that
$L'(v_j)=\{\varphi(v_j)\}$ for $1\le j\le i+1$, $L'(v_{i+2})=\{\varphi(v_{i+1}),\varphi(v_{i+2})\}$ and $L'(x)=L(x)$ otherwise.
The graph $G'=G-v_{i+2}q_3$ with the precolored path $p_0p_1p_2v_1\ldots v_{i+1}$ is not $L'$-colorable,
thus it contains a subgraph $H$ which is an image of one of the relevant configurations from Figure~\ref{fig-obstb}.
By \refclaim{cl-schords}, if $i=2$ then $G'$ does not contain OBSTb1 or OBSTb2,
hence $v_{i+1},v_{i+2}\in V(H)$.  By \refclaim{cl-inter}, we conclude that $v_i$ has degree at least three in $H$, and by the choice of $Q$
so that $G_2$ is maximal, we have $Q\subset H$.
By \refclaim{cl-inter} and \refclaim{cl-schords}, we have $G'=H$.  If $H$ is OBSTb3, then $G$ is OBSTx4.  Otherwise, $G$ contains a subgraph $H'$ depicted in Figure~\ref{fig-redu}.
Observe that every $L$-coloring of $G-V(H')$ extends to an $L$-coloring of $G$, contradicting the minimality of $G$.

\begin{figure}
\center{\includegraphics{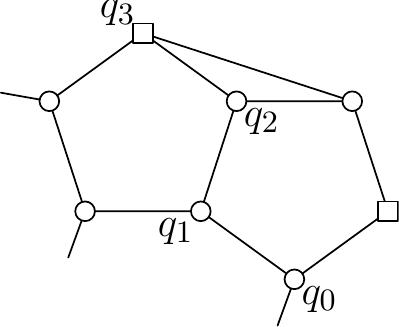}}
\caption{A configuration from claim \refclaim{cl-nonearchords}.}
\label{fig-redu}
\end{figure}

\item If $G_2$ is OBSTa2, then let $w_1$ and $w_2$ be the neighbors of $v_i$ and $v_{i+2}$, respectively, that are incident with the triangle $T$ of the configuration.
Since neither $\varphi_1$ nor $\varphi_2$ extends to an $L$-coloring of $G_2$, we have $L(w_1)=L(w_2)$.
Let $\varphi$ be a coloring of the path $p_mv_1\ldots v_{i+2}$ such that $\varphi(v_i)\neq\varphi(v_{i+2})$.
Let $G'$ be the graph obtained from $G-(V(T)\cup \{v_{i+1}\})$ by adding the edge $v_iv_{i+2}$, and $L'$ the list assignment
such that $L'(v_j)=\{\varphi(v_j)\}$ for $1\le j\le i+2$ and $L'(x)=L(x)$ otherwise.  Note that $G'$ is not $L'$-colorable.
By \refclaim{cl-inter}, no $(\le\!4)$-cycle in $G'$ contains the edge $v_iv_{i+2}$, thus the minimality of $G$ implies that $G'$
with the precolored path $p_0p_1p_2v_1\ldots v_iv_{i+2}$
violates~(Q) or~(OBSTb).  If $G'$ violates~(Q), then $q_3$ is adjacent to $p_0$, and since $q_1$ has degree at least three,
\refclaim{cl-inter} applied to the cycle $p_0p_1\ldots q_0q_1q_2q_3$ shows that $i=2$ and $q_1$ is adjacent to $p_1$.  It follows
that $G$ contains OBSTa4.  Suppose now that $G'$ contains an image $H$ of a relevant configuration from Figure~\ref{fig-obstb}.  Observe that $v_{i+3}$ belongs to $H$;
and, an inspection of the graphs in Figure~\ref{fig-obstb} shows that $v_{i+3}$ has degree two in $H$.  However, since $Q$ is
a $3$-chord, $v_{i+3}=q_3$ has degree at least three in $G$.  We conclude that either a face of $H$ or a path in the outer face
of $H$ gives a cycle or a $(\le 3)$-chord in $G$ contradicting \refclaim{cl-inter} or \refclaim{cl-schords}.
\end{itemize}
\end{itemize}
Therefore, we obtained a contradiction in all the cases, excluding the existence of a $(\le 3)$-chord $Q$ with the specified properties.
\end{proof}

Let $k$ be the index such that $v_k\in X$ and $v_{k+1}\not\in X$.  We now show that $G$ contains one of several subgraphs
near to $P$; see Figure~\ref{fig-AB} for cases~(A4) and~(A5).

\begin{figure}
\center{\includegraphics{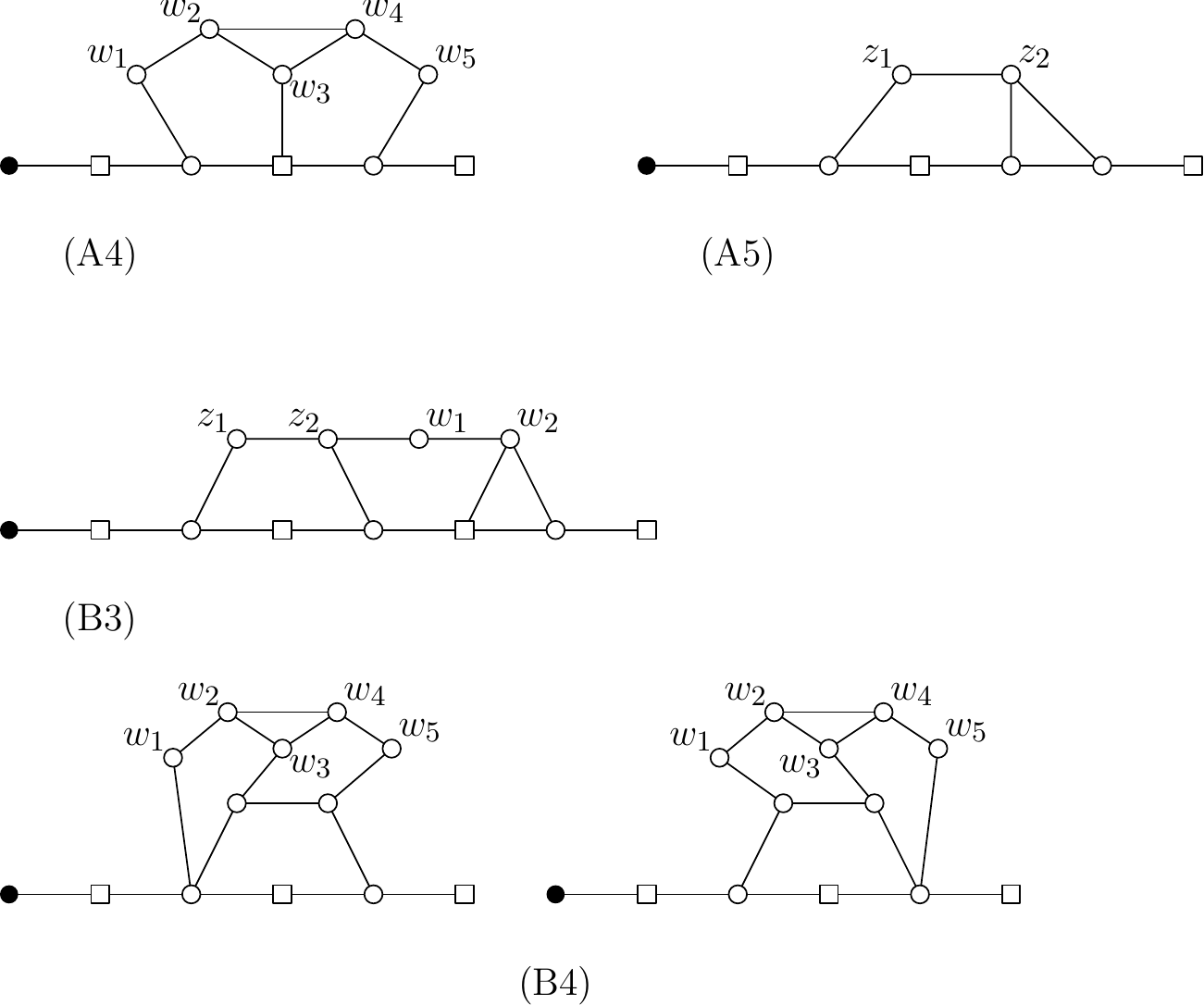}}
\caption{Configurations from claims \refclaim{cl-neart} and \refclaim{cl-neartb}.}
\label{fig-AB}
\end{figure}

\claim{cl-neart}{One of the following holds:
\begin{itemize}
\item[{\rm (A1)}] $|X|=3$ and $v_2v_3v_4$ is a part of the boundary walk of a $5$-face, or
\item[{\rm (A2)}] a vertex of $X$ is incident with a triangle, or
\item[{\rm (A3)}] an edge of the path $p_mv_1v_2\ldots v_k$ is incident with a $4$-face, or
\item[{\rm (A4)}] $|X|=3$ and there exists a path $w_1w_2w_3w_4w_5$ in $G-(X\cup V(P))$ such that
$w_2w_4, v_2w_1,v_3w_3, v_4w_5\in E(G)$, or
\item[{\rm (A5)}] $|L(v_1)|=|L(v_3)|=|L(v_6)|=2$ and there exist adjacent vertices $z_1,z_2\in V(G)\setminus (X\cup V(P))$
such that $z_1v_2, z_2v_4, z_2v_5\in E(G)$.
\end{itemize}}
\begin{proof}
Suppose that $X$ satisfies none of these conditions.
Since no vertex of $X$ is incident with a triangle, \refclaim{cl-nochords} implies that the subgraph $R$ induced by $V(P)\cup \{v_1,\ldots, v_k\}$
is either a path or equal to the cycle $C$.
Observe that there exists a proper $L$-coloring $\psi$ of $R$ such that if $|L(v_{k+1})|=2$, then $\psi(v_k)\in L(v_k)\setminus L(v_{k+1})$.
Let us remark that if $v_1\in X$, then $\psi(v_1)\not\in L(p_m)$, and
if $v_1\not\in X$ and $|L(v_1)|=2$, then $\psi(v_2)$ is different from the unique color in $L(v_1)\setminus L(p_m)$.

Let $G'=G-X$ and let $L'$ be the list assignment obtained from $L$ by removing the colors of vertices of $X$ according to $\psi$ from the lists of
their neighbors, with the following exception: if $v_1\not\in X$ and $|L(v_1)|=2$, then $L'(v_1)=L(v_1)$ (note that still,
an $L'$-coloring of $G'$ corresponds to an $L$-coloring of $G$, since $L(v_1)\setminus L(p_m)$ contains only one color, and this color
is different from $\psi(v_2)$).
By \refclaim{cl-nochords}, no neighbor of a vertex of $X$ other than $v_1$ and $v_{k+1}$ has list of size less than three in $L$; furthermore,
since~(A2) and~(A3) are false, no vertex of $G'$ has two neighbors in $X$.  It follows that
$G'$ satisfies~(S2).  By \refclaim{cl-nochords} and \refclaim{cl-2pchord}, no vertex of $V(G)\setminus V(P)$ has two neighbors in $P$,
thus~(Q) holds.

Let us now show that~(I) holds: otherwise, there would exist adjacent vertices $w_1, w_2\in V(G')$ such that
$|L'(w_1)|=|L'(w_2)|=2$.  We can assume that $|L(w_1)|=3$, and thus $w_1$ has a neighbor $x\in X$.  If $|L(w_2)|=3$, then
$w_2$ has a neighbor in $X$ as well, and by \refclaim{cl-inter}, it follows that~(A1), (A2) or~(A3) holds.  Suppose that $|L(w_2)|=2$.
If $w_1\not\in V(C)$,
then by \refclaim{cl-nonearchords}, the $2$-chord $xw_2w_1$ splits off a face.  This face is a triangle, as otherwise $w_2$ would be adjacent
to a vertex with degree two and list of size three, and thus~(A2) holds.  If $w_1\in V(C)$, then since~(A2) is false, \refclaim{cl-nonearchords} implies that
$w_1\in \{v_1,v_{k+1}\}$.  If $w_1=v_1$, then the chord $w_1w_2$ contradicts \refclaim{cl-nochords}, hence this is only possible if $w_1=v_{k+1}$ and $w_2=v_{k+2}$.
However, the set $X$ was chosen so that if $|L(v_{k+1})|=3$, then $|L(v_{k+2})|=3$.  In all cases, we conclude that (I) is satisfied.

Suppose now that~(T) is violated, that is, there exists a path $w_1w_2w_3w_4w_5$ in $G'$ such that $|L'(w_1)|=|L'(w_3)|=|L'(w_5)|=2$
and $w_2w_4\in E(G)$.  Let us discuss several cases:
\begin{itemize}
\item If $|L(w_3)|=2$, then by the condition~(T) for $G$ and list assignment $L$ and by the symmetry between $w_1$ and $w_5$,
we can assume that $|L(w_1)|=3$, and hence $w_1$ has a neighbor $x\in X$.
\begin{itemize}
\item If $w_1\not\in \{v_1,v_{k+1}\}$, then \refclaim{cl-nonearchords} implies that a subpath of $xw_1w_2w_3$ splits off a face $F$.  Since $|L(w_3)|=2$
and no vertex with list of size three has degree two, we have $\ell(F)\le 4$.  However, $d(F, w_2w_3w_4)<B$, which is a contradiction.
\item If $w_1=v_1$, then by \refclaim{cl-schords}, a subpath of
$w_1w_2w_3$ splits off a triangle or OBSTx1.   However, the triangle $T$ in the split-off part intersects $X$, and thus it is not equal to $w_2w_3w_4$.
Again, we have $d(T,w_2w_3w_4)<B$, which is a contradiction.
\item It remains to consider the case that $w_1=v_{k+1}$.  If $|L(w_5)|=3$, by symmetry we have $w_5=v_{k+1}=w_1$, which is a contradiction.
Therefore, $|L(w_5)|=2$ and by \refclaim{cl-schords}, $w_3w_4w_5$ is a subpath of $C$.  Similarly,
we conclude that that $w_1w_2w_3\subset C$, and thus $w_j=v_{k+j}$ for $1\le j\le 5$.
However, then $k\le 3$, since both $v_{k+1}$ and $v_{k+2}$ have a list of size three, and $|L(v_{k+5})|=2$ and $v_{k+3}$ is a vertex of degree two incident
with a triangle, contradicting \refclaim{cl-nosimt}.
\end{itemize}
\item If $|L(w_3)|=3$, then $w_3$ has a neighbor $y\in X$.  If $|L(w_1)|=|L(w_5)|=3$, then both $w_1$ and $w_5$ have a neighbor in $X$, and thus~(A4) holds.
Therefore, assume that say $|L(w_1)|=2$.  If $w_3\not\in \{v_1,v_{k+1}\}$, then by \refclaim{cl-nonearchords} a subpath of $yw_3w_2w_1$ splits off a
face of length at most four whose distance from $w_2w_3w_4$ is less than $B$, which is a contradiction.
It follows that $w_3\in \{v_1,v_{k+1}\}$.  By \refclaim{cl-schords}, we have $w_1w_2w_3\subset C$, hence $w_j=v_{k+4-j}$ for $1\le j\le 3$.
If $|L(w_5)|=2$, a symmetrical argument would show that $w_5=v_{k+3}=w_1$, thus we have $|L(w_5)|=3$ and $w_5$ has a neighbor in $X$.
By the choice of $X$, it follows that~(A5) holds.
\end{itemize}

Therefore, $G'$ satisfies~(S1), (S2), (S3), (I), (Q) and~(T),
and by the minimality of $G$, we conclude that $G'$ violates~(OBSTa) and~(OBSTb).
Thus $G$ contains a near-obstruction $H$. By \refclaim{cl-notame}, $H$ is not tame, hence there exists a vertex $v\in V(H)\setminus V(C)$ such that $|L'(v)|=2$
and $v$ has a neighbor in $P$.  By \refclaim{cl-nopchord}, $v$ is not adjacent to an endvertex of $P$, hence either $m=2$ and $H$ is OBSTx1, or $m=5$ and $H$ is
OBSTb1 or OBSTb2, with a vertex $p\in \{p_0,p_m\}$ not contained in $H$.  
Let us discuss the cases separately.  Let $v_t$ be the neighbor of $v$ in $X$.  

\begin{itemize}
\item Suppose first that $m=2$.  The outer face of $H$ consists of an edge of $P$ and of a path $q_0q_1q_2q_3$, where $q_0q_2\in E(G)$ and $q_3=v$.

If $p=p_0$, then $H$ is drawn inside the closed disk bounded by $K=p_2p_1vv_tv_{t-1}\ldots v_1$.  Then, \refclaim{cl-inter}
implies that $t\ge 3$.
Note that $q_1\not\in V(C)$, as otherwise we would have $q_1\in X$.  Therefore, $q_1$ has a neighbor $z\in X$.
By~\refclaim{cl-4chord} applied to $p_0q_1q_1z$, we have $z=v_2$ and $|L(v_1)|=2$.
Considering the path $Q=p_0p_1vv_t$, as in \refclaim{cl-4chord} we conclude that $p_0$ and $v_t$ have a common
neighbor with list of size two.  It follows that $|L(v_t)|=3$, and by the choice of $X$, we have $t=4$.  By~\refclaim{cl-inter},
this uniquely determines the graph $G$, which however is $L$-colorable.

Hence, suppose that $p=p_2$.  Since no vertex of degree two has list of size three, \refclaim{cl-inter} implies that $t=2$ and $v_2$ has
list of size three.  Therefore, $v_2$ has degree at least three, and $q_2,q_3\not\in V(C)$ by \refclaim{cl-4chord}.
If $q_1\in V(C)$, then \refclaim{cl-4chord} implies that $q_1=v_4$, $|L(v_4)|=3$ and $q_0=v_5$; the resulting graph is $L$-colorable.

Hence, we can assume that $q_1\not\in V(C)$.  Then, we have $|L(q_1)|=3$ and $q_1$ is adjacent to a vertex $x\in X$.
Note that $x$ and $p_0$ have a common neighbor with list of size two by \refclaim{cl-4chord} applied to $xq_1q_0p_0$.
Hence, $|L(x)|=3$, and by the choice of $X$ we have $x=v_4$.
Again, the resulting graph is $L$-colorable.

\item Let us now consider the case that $m=5$.  By \refclaim{cl-nopchordm} and symmetry (we will no longer use any properties of the set $X$),
we can assume that $p=p_5$ and $v$ is adjacent to $v_2$ and $p_4$.  Let $K$ be the cycle bounding
the outer face of $H$ and $Q=K-(V(P)\cup \{v_1\})=q_0q_1\ldots$, where $q_0$ is adjacent to $p_0$.  By \refclaim{cl-nopchord}, we have $q_0\in V(C)$.
Let $G_1=G-(V(H)\setminus V(Q))$.

If $H$ is OBSTb1, then note that $v_2$ has degree at least three, thus by \refclaim{cl-4chord} $q_0$ and $v_2$ have a common neighbor with list of size two.
However, then $G$ contains OBSTb2b.  Therefore, $H$ is isomorphic to OBSTb2.  There exists an $L$-coloring $\varphi$ of $H$ such that
$\varphi(q_1)\not\in L(q_0)\setminus L(p_0)$.  Let $L_1$ be the list assignment defined by $L_1(x)=\varphi(x)$ for
$x\in V(Q)\setminus \{q_0\}$, $L_1(q_0)=(L(q_0)\setminus L(p_0))\cup\{\varphi(q_1)\}$ and $L_1(x)=L(x)$ otherwise;
$G_1$ cannot be $L_1$-colorable.  Since a path $Q-q_0$ of length $4$ is precolored in $G_1$ and $d_{G_1}(Q-q_0)\ge d_G(P)-3\ge r(P)-3=r(Q-q_0)$,
the minimality of $G$ implies that $G_1$ violates~(S3), (Q) or~(OBSTb).

As $q_2$ cannot be a vertex of degree two with a list of size three,
$G_1$ satisfies (S3), and if $G_1$ violates (Q), then
\refclaim{cl-schords} implies that $G$ consists of $H$ and a vertex with list of size two adjacent to $q_2$ and $v_2$,
and $G$ is $L$-colorable.  Similarly, if $G_1$ violates (OBSTb), then $G_1$ is OBSTb2 and $G$ is $L$-colorable.
\end{itemize}
In all cases, we concluded that $G$ is $L$-colorable, which is a contradiction.
\end{proof}

Let $O=(H,Q,f)$ be one of the relevant configurations from Figure~\ref{fig-obsta} or \ref{fig-obstb}.
A set $U\subseteq V(H)\setminus V(Q)$ {\em has lists determined by the rest of the configuration $O$}
if whenever $L_1$ and $L_2$ are two list assignments to $H$ such that
\begin{itemize}
\item $|L_1(v)|=|L_2(v)|=1$ for $v\in V(Q)$ and $|L_1(v)|=|L_2(v)|=f(v)$ for $v\in V(H)\setminus V(Q)$,
\item $L_1(x)=L_2(x)$ for each $x\not\in U$,
\item vertices with list of size one give a proper coloring of the path induced by them, and
\item $H$ is neither $L_1$-colorable nor $L_2$-colorable,
\end{itemize}
then $L_1=L_2$.  That is, the list assignment that does not extend to $H$ is uniquely determined
once it is known on all the vertices except for those in $U$.
Let $D=\{v\in V(H)\setminus V(Q):f(v)=2\}$.
We call $O$ {\em $k$-determined} if every set $U\subseteq D$ of size at most $k$
has lists determined by the rest of $O$.  A straightforward case
analysis shows the following.

\claim{cl-obstun}{All relevant configurations in Figures~\ref{fig-obsta} and \ref{fig-obstb} are $1$-determined.
All except OBSTa2, OBSTx1c, OBSTx2b, OBSTb1, OBSTb1a, OBSTb3, OBSTb5 and OBSTb6 are $2$-determined.}

Let us now further discuss the subcase~(A1) of \refclaim{cl-neart}; see Figure~\ref{fig-AB} for cases~(B3) and~(B4).

\claim{cl-neartb}{If $|X|=3$ and $v_2v_3v_4z_2z_1$ is a $5$-face, then there exists 
\begin{itemize}
\item[{\rm (B1)}] a triangle incident with $v_2$, $v_4$, $z_1$ or $z_2$, or
\item[{\rm (B2)}] a $4$-face incident with $z_1$ or $z_2$, or
\item[{\rm (B3)}] adjacent vertices $w_1$, $w_2\in G-(X\cup \{z_1,z_2\})$ such that
$w_1z_2,w_2v_5,w_2v_6\in E(G)$, and furthermore, $|L(v_7)|=2$, or
\item[{\rm (B4)}] a path $w_1w_2w_3w_4w_5$ in $G-(X\cup \{z_1,z_2\})$ such that
$w_2w_4\in E(G)$, and either $v_2w_1,z_1w_3, z_2w_5\in E(G)$ or $z_1w_1,z_2w_3, v_4w_5\in E(G)$
(possibly with $w_1=v_1$ in the former case or $w_5=z_5$ in the latter case).
\end{itemize}}
\begin{proof}
Suppose that none of these conditions is satisfied.
Since $v_2$ and $v_4$ have lists of size three, they must have degree at least three in $G$,
and thus \refclaim{cl-nonearchords} implies that $z_1,z_2\not\in V(C)$, unless~(B1) holds.  Let $\varphi$ be
the coloring of $X=\{v_2,v_3,v_4\}$, $G'=G-X$ and $L'$ the list assignment to $G'$ as chosen in the proof of \refclaim{cl-neart}.
Note that $|L'(z_1)|,|L'(z_2)|\ge 2$.

There exist at least two $L'$-colorings $\varphi_1$ and $\varphi_2$ of the path $z_1z_2$ such that
$\varphi_1(z_1)\neq \varphi_2(z_1)$ and $\varphi_1(z_2)\neq \varphi_2(z_2)$.  For $i\in \{1,2\}$, let
$L_i$ be the list assignment obtained from $L'$ by removing the colors of $z_1$ and $z_2$ according to $\varphi_i$
from the lists of their neighbors.

Let $G''$ be the graph obtained from $G'-\{z_1,z_2\}$ by repeatedly removing the vertices whose degree is less than
the size of their list both in $L_1$ and in $L_2$.  Since $G$ is not $L$-colorable,  $G''$ is not $L_i$-colorable, for $i\in \{1,2\}$.
By the minimality of $G$, we can apply Theorem~\ref{thm-maingen} to $G''$ with the precolored path $P$ and list assignment $L_1$ or $L_2$.

Note that \refclaim{cl-nonearchords} implies that $G''$ with either of the assignments satisfies~(S2), and by \refclaim{cl-2pchord},
(Q) holds as well.
Let us argue that~(I) is satisfied.  Unless~(B1) or~(B2) holds, \refclaim{cl-nonearchords} implies that no neighbor of $z_1$ and $z_2$
other than $v_2$ and $v_4$ lies in $C$, and furthermore, there exists no path $wxy$, where
$w\in \{z_1,z_2\}$, $x\not\in \{v_2,v_4,z_1,z_2\}$ and $|L(y)|=2$.  Thus, (I) holds unless there exists a path $wxyv$ with
$w\in \{z_1,z_2\}$, $v\in \{v_2,v_4,z_1,z_2\}$ and $x,y\in V(G)\setminus (V(C)\cup \{z_1,z_2\})$.
Since~(B1) and~(B2) are false, we have $w=z_1$ and $v=v_4$ or $w=z_2$ and
$v=v_2$.  However, then \refclaim{cl-inter} implies that $z_1$ or $z_2$ has degree two, which is a contradiction.

Let us now consider the condition~(T) for $G''$.  Suppose that there exists a path $w_1w_2w_3w_4w_5$ with
$w_2w_4\in E(G)$ and $|L_i(w_1)|=|L_i(w_3)|=|L_i(w_5)|=2$ for some $i\in\{1,2\}$.  We distinguish two cases depending
on the size of the list of $w_3$.
\begin{itemize}
\item If $|L(w_3)|=2$, then by~(T) and symmetry, we
can assume that $|L(w_1)|=3$, and thus $w_1\not\in\{v_1,v_5\}$ and by \refclaim{cl-nonearchords}, $w_1\not\in V(C)$.
Consider the shortest $(\le\!5)$-chord $Q$ contained in $X\cup \{z_1,z_2,w_1,w_2,w_3,w_4\}$ such that the subgraph $F$ of $G$
that is split off by $Q$ does not contain the triangle $w_2w_3w_4$.  We have $d_F(Q)\ge B-3\ge r(Q)$, since the triangle $w_2w_3w_4$ intersects $Q$.
Note that by the minimality of $G$, there exists an $L$-coloring $\psi$ of $Q$ that does not extend to an $L$-coloring of $F$.
Let $L'$ be the list assignment such that $L'(v)=\{\psi(v)\}$ for $v\in V(Q)$ and $L'(v)=L(v)$ otherwise.
We conclude that $F$ with the list assignment $L'$ and the precolored path $Q$ violates~(S3), (Q) or~(OBSTb).

If $F$ violates~(OBSTb), then by \refclaim{cl-inter} and \refclaim{cl-schords}, $F$ is an image of one of the relevant configurations in Figure~\ref{fig-obstb}.
Since $|L(w_3)|=2$, this is only possible if $\ell(Q)=5$.  In this case, we have $w_5\in V(F)\setminus V(Q)$.  However, note that $v_5$ has degree two in $F$ and thus
it has degree one in $G-X$.  It follows that $v_5\not\in V(G'')$.  However, when we remove $v_5$ during the construction of $G''$, we decrease the degree
of $v_6$ to two; hence also $v_6\not\in V(G'')$.  Repeating this observation, we conclude that $(V(F)\setminus V(Q))\cap V(G'')=\emptyset$.
This implies that $w_5\not\in V(G'')$, which is a contradiction.

If $F$ violates~(S3) or~(Q), then \refclaim{cl-inter} and \refclaim{cl-schords} imply that $Q$ splits off a face.
In particular, we have $v_4\in V(Q)$.  If~(S3) fails, then since $|L(v_5)|=2$, we have that $v_5=w_3$.  Since $z_2$ has degree at least three, we conclude that
$w_1$ is adjacent to $z_2$ by~\refclaim{cl-inter}.  Since $w_1$ has degree at least three, \refclaim{cl-inter} implies
that $w_5$ is not adjacent to $v_2$, $z_1$ or $z_2$; therefore, $|L(w_5)|=2$, and by \refclaim{cl-schords} we have $w_5=v_7$ and $G$ satisfies~(B3).

If~(Q) is violated, then note that $v_5$ has degree one in $G-X$, hence $v_5\not\in V(G'')$ and consequently, $v_5\neq w_5$.  By~\refclaim{cl-inter} and \refclaim{cl-schords},
this excludes the case that $w_4$ is the endvertex of $Q$ adjacent to $v_5$.  Since $|L(v_5)|=|L(w_3)|=2$, $v_5$ is not adjacent to $w_3$.  It follows that $v_5$ is adjacent to $w_2$.
By~(T) for $G$, we have $|L(w_5)|=3$.  By symmetry of the path $w_1w_2w_3w_4w_5$, we conclude that $v_5$ is also adjacent to $w_4$, which is a contradiction since $v_5\neq w_3$.

\item Suppose now that $|L(w_3)|=3$ and $w_3$ has a neighbor in $X\cup \{z_1,z_2\}$.  If $|L(w_i)|=3$ or $w_i\in\{v_1,v_5\}$ holds for each $i\in\{1,5\}$, then since both
$z_1$ and $z_2$ have degree at least three, \refclaim{cl-inter} implies that~(B4) holds.  Therefore, by symmetry we can assume that $|L(w_1)|=2$ and $w_1\not\in \{v_1,v_5\}$.
Again, we consider the shortest $(\le\!5)$-chord $Q$ contained in $X\cup \{z_1,z_2,w_1,w_2,w_3,w_4\}$ such that the subgraph $F$ of $G$
does not contain the triangle $w_2w_3w_4$.  As in the previous paragraph, we conclude that $F$ is a face and violates~(S3) or~(Q).
If $|L(w_5)|=2$, then by symmetry we can assume that $w_5\in V(F)$, and thus $w_5=v_5$.  However, in that case $w_5\not\in V(G'')$, which is a contradiction.
Therefore, $|L(w_5)|=3$ and $w_5\not\in V(F)$.  Since $z_1$ has degree at least three, $w_5$ is adjacent to $z_1$ by \refclaim{cl-inter}.
However, $v_5$ is adjacent to $w_2$, and the path $v_5w_2w_3w_4w_5$ satisfies~(B4).
\end{itemize}
Therefore, we can assume that $G''$ satisfies~(T).
Let us now show that $G''$ is $L_1$-colorable or $L_2$-colorable, thus obtaining an $L$-coloring of $G$ and a contradiction.

We first consider the case that neither $z_1$ nor $z_2$ has a neighbor in $P$.  Then both $L_1$ and $L_2$ satisfy~(S3).
We conclude that $G''$ with either of these assignments violates~(OBSTa) and~(OBSTb).  Thus, $G$ contains a (unique) near-obstruction $H$.
Note that the sizes of the lists of the vertices of $H$ are determined by the corresponding relevant configuration, i.e., $|L_1(v)|=|L_2(v)|$ for every $v\in V(H)$.
The case that $|L_i(v)|=|L'(v)|$ holds for every $v\in V(G)$ is excluded similarly to \refclaim{cl-neart}.
Therefore, assume that there exists $u_1\in V(H)$ with $|L'(u_1)|=3$ and $|L_i(u_1)|=2$.
Let $K$ be the outer face of $H$, and let $q_0q_1\ldots q_t=K-V(P)$, where $q_0$ is the neighbor of $p_0$ (or of
$p_1$, if $H$ is OBSTb1, OBSTb2 or OBSTx1 and $p_0\not\in V(H)$).

The vertex $u_1$ cannot be adjacent to both $z_1$ and $z_2$, thus $L_1(u_1)\neq L_2(u_1)$.
Since $H$ is neither $L_1$-colorable nor $L_2$-colorable and $H$ is $1$-determined by \refclaim{cl-obstun},
it follows that $H$ contains another vertex $u_2$ such that $|L'(u_2)|=3$ and $|L_i(u_2)|=2$.
Suppose that $u_1$ and $u_2$ are both adjacent to $z_1$ or both adjacent to $z_2$.  Since~(B1) and~(B2) are false,
the distance between $u_1$ and $u_2$ must be at least three.  Furthermore, we can assume that no other
vertex between $u_1$ and $u_2$ in $K-V(P)$ has list of size two.  This is only possible if $H$ is OBSTa1, OBSTa5, OBSTx2a,
or OBSTx3. Note that $H$ is not OBSTa1, OBSTa5 or OBSTx3, since OBSTa1 is $2$-determined and OBSTa5 and OBSTx3 are $4$-determined.
Therefore, either $H$ is OBSTx2a or we can assume that $u_1$ is adjacent to $z_1$, $u_2$ is adjacent to $z_2$, and that
$L_i(x)=L'(x)$ for $i\in\{1,2\}$ and $x\in V(H)\setminus \{u_1,u_2\}$.  In the latter case, we conclude that $H$ is not $2$-determined.
By \refclaim{cl-obstun}, $H$ is one of OBSTa2, OBSTx1c, OBSTx2b, OBSTb1, OBSTb1a, OBSTb3, OBSTb5 or OBSTb6.

Let us make one more useful observation: suppose that $\ell(P)=2$, $q_0$ is adjacent to $p_0$ and $|L_1(q_0)|=2$.
If $|L'(q_0)|=3$, then consider the subgraph $G_2$ of $G$ that is split off by the path $Q=p_0q_0zv$,
where $z\in\{z_1,z_2\}$ and $v\in \{v_2,v_4\}$.  Since $H$ contains a triangle whose distance to $Q$ is at most $3$,
\refclaim{cl-4chord} implies that $v$ and $p_0$ have a common neighbor with list of size two.
We conclude that $v=v_4$ and that $v_5$ is adjacent to $p_0$.  However, then $\ell(C)\le 8$,
contradicting \refclaim{cl-length}.
Therefore, $|L'(q_0)|=2$, and since (B1) is false, by \refclaim{cl-nonearchords} we have $|L(q_0)|=2$.
Since $u_1$ and $u_2$ exist, in this situation $H$ has at least three vertices with list of size two.
This implies that $H$ is neither OBSTa2 not OBSTx1c.  It also implies that $H$ is not OBSTx2a, since OBSTx2a is $2$-determined.

Let us consider the remaining obstructions (OBSTx2b, OBSTb1, OBSTb1a, OBSTb3, OBSTb5 and OBSTb6) separately:
\begin{itemize}
\item {\em $H$ is OBSTx2b:} If $p_0$ has degree two in $H$, then by the observation, we have $|L(q_0)|=2$, and thus $H$ is a tame near-obstruction.
This contradicts \refclaim{cl-notame}, and thus $p_0$ has degree three in $H$.  Furthermore, \refclaim{cl-notame} implies that
$q_5\not\in V(C)$, and thus $q_5$ is adjacent to $z_1$ and $q_3$ is adjacent to $z_2$.  If $|L(q_1)|=2$,
then by \refclaim{cl-4chord} applied to (a subpath of) $v_4z_2q_3q_2q_1$, $v_5$ is adjacent to $q_2$ (possibly $v_5=z_1$).  However, by \refclaim{cl-inter}
and \refclaim{cl-schords}, $G$ does not contain any other vertices, and such a graph is $L$-colorable.
Thus, $|L(q_1)|=3$ and $q_1$ is adjacent to $v_4$.  By \refclaim{cl-4chord} for $p_0q_0q_1v_4$, we
conclude that $v_5$ is adjacent to $p_0$, contradicting \refclaim{cl-length}.
\item {\em $H$ is OBSTb1 or OBSTb1a:} If $p_0\in V(H)$, then by \refclaim{cl-4chord} for the path $v_4z_2u_2p_0$, we have that $v_5$ is adjacent to $p_0$.
However, then $G$ contains no other vertices and is $L$-colorable.  Thus, $p_0\not\in V(H)$ and $H$ is OBSTb1.
In this case, we similarly conclude that the path $p_0p_1u_2z_2v_4$ splits off a face, OBSTb1 or OBSTb2.
In all these cases, $G$ is $L$-colorable.
\item {\em $H$ is OBSTb3:} This is excluded by \refclaim{cl-2pchord}.
\item {\em $H$ is OBSTb5:} Suppose that $u_2=q_0$.  Then $u_1=q_2$ and $q_4=v_1$, and by \refclaim{cl-4chord} applied to
$v_4z_2q_0p_0$, we conclude that $v_5$ is adjacent to $p_0$.  However, such a graph is $L$-colorable.
So, $u_2=q_2$ and $u_1=q_4$.  If $|L(q_0)|=3$, then $q_0$ would be adjacent to $v_4$, contradicting \refclaim{cl-nonearchords}.
Thus, $|L(q_0)|=2$. Consider the path $q_0q_1q_2z_2v_4$.  By \refclaim{cl-4chord}, $v_5$ is adjacent to $q_1$
(possibly $v_5=q_0$).  However, then $G$ is $L$-colorable.
\item {\em $H$ is OBSTb6:} Let us note that only one two-element subset of vertices of $H$ with list of size two
does not have lists determined by the rest of the configuration---the one consisting of the two rightmost square vertices in the depiction of OBSTb6 in Figure~\ref{fig-obstb}.
So, we can assume that $p_3$ has degree $4$ in $H$, $u_2=q_4$ and $u_1=q_6$, and $|L(q_0)|=2$.
If $v_4$ is adjacent to $q_2$, then by \refclaim{cl-4chord} for (a subpath of) $q_0q_1q_2v_4$,
we conclude that $v_5=q_2$ and $|L(q_2)|=2$.  If $v_4$ is not adjacent to $q_2$, then $|L(q_2)|=2$ as well.
By \refclaim{cl-4chord} applied to $q_2q_3q_4z_2v_4$, we have that $v_5$ is adjacent to $q_3$.
And again, we conclude that $G$ is $L$-colorable.
\end{itemize}

Let us now consider the case that $z_1$ or $z_2$ is adjacent to a vertex of $P$. By \refclaim{cl-nonearchords}, this vertex
must be an internal vertex of $P$.
\begin{itemize}
\item If exactly one of $z_1$ and $z_2$ has a neighbor in $P$, then by \refclaim{cl-2pchord} at least one of
$L_1$ and $L_2$, say $L_1$, satisfies~(S3).  It follows that $G''$ with the list assignment $L_1$ must violate~(OBSTa) and~(OBSTb),
and $G$ contains a near-obstruction $H$.  However, since one of $z_1$ and $z_2$ has an internal vertex $p\in P$ as a neighbor,
$p$ is a cut-vertex in $G''$, thus this is only possible if $p\in\{p_1,p_{m-1}\}$ and either
$\ell(P)=2$ and $H$ is OBSTx1, or $\ell(P)=5$ and $H$ is OBSTb1 or OBSTb2.
\begin{itemize}
\item Suppose that there exists a vertex $v\in V(H)$ adjacent to $p$ such that $|L_1(v)|=2$.  By \refclaim{cl-nochords}, $v$ is adjacent to
$v_2$, $v_4$, $z_1$ or $z_2$.  Since $z_1$ or $z_2$ is adjacent to $p$ and neither $z_1$ nor $z_2$ is incident with a $(\le\!4)$-cycle,
\refclaim{cl-inter} implies that $z_1$ or $z_2$ has degree two.  This is a contradiction.
\item It remains to consider the case that no vertex with list of size two is adjacent
to $p$, and thus $\ell(P)=2$.  By \refclaim{cl-notame}, the two vertices of $H$ with list of size two are adjacent to $z_2$ and $v_4$, respectively.
However, then $p_0$ and $v_4$ are joined by a $2$-chord contradicting \refclaim{cl-nonearchords}.
\end{itemize}

\item If both $z_1$ and $z_2$ have a neighbor in $P$, then since neither~(B1) nor~(B2) holds, 
the neighbors of $z_1$ and $z_2$ are internal vertices of $P$ by \refclaim{cl-nonearchords}, and $\ell(P)\ge 4$.
Let $p_i$ be the neighbor of $z_1$ and $p_j$ the neighbor of $z_2$.
\begin{itemize}
\item Suppose first that $i<m-1$ or $j<m-3$. By \refclaim{cl-inter}, $P$ contains two adjacent vertices of degree two that are not contained in any $(\le\!5)$-cycle.
In that case, contract these two vertices into one (and change its color so that
it is consistent with the colors of its neighbors).  The resulting graph is a smaller counterexample to Theorem~\ref{thm-maingen},
which is a contradiction.
\item Therefore, we can assume that $i=m-1$ and $j=m-3$.  Let $Q=p_0p_1\ldots p_{m-3}z_2v_4$, and let $\varphi$
be an $L$-coloring of the subgraph of $G$ induced by $V(P)\cup X\cup \{v_1,z_1,z_2\}$ that exists by the minimality of $G$.
Let $G_3=G-(V(P)\setminus V(Q))-\{v_1,v_2,v_3, z_1\}$.  Let $L_3$ be the list assignment such that $L_3(x)=\varphi(x)$
for $x\in V(Q)$ and $L_3(x)=L(x)$ otherwise.  The graph $G_3$ with the precolored path $Q$ is not $L_3$-colorable, thus it violates~(Q)
or contains OBSTb1 or OBSTb2.  If $G_3$ violates~(Q), then \refclaim{cl-nonearchords} implies that
$v_5$ is adjacent to $p_0$ and $G$ contains OBSTb2 or OBSTb2a.  If $G_3$ contains
OBSTb1, then $G$ contains OBSTb6.  Otherwise, $G$ is $L$-colorable.
\end{itemize}
\end{itemize}
In all cases, we obtained a contradiction.
\end{proof}

Let $T$ be the $4$-cycle at distance at most one from $X$ or the triangle at distance at most two from $X$,
which exists by \refclaim{cl-neart} and \refclaim{cl-neartb}.  Since $d(P,T)\le4$, we have $\ell(P)=2$.

\claim{cl-A3false}{(A3) is false.}
\begin{proof}
Suppose that~(A3) holds, i.e., $T$ is a $4$-cycle sharing an edge with the path $p_2v_1\ldots v_k$.
Let $v_iv_{i+1}$ be such an edge with $i$ minimal and let $\varphi$ be an $L$-coloring of the path $p_2v_1\ldots v_i$.
Let $G'$ be the graph obtained from $G-v_iv_{i+1}$ by adding a vertex $v$ adjacent to $v_i$ and $v_{i+1}$.
Let $c$ be a color that does not appear in the lists of $v_i$ and $v_{i+1}$.
Let $L'$ be a list assignment such that $L'(x)=\{\varphi(x)\}$ for $x\in \{v_1,\ldots, v_i\}$,
$L'(v)=\{c\}$ if $|L(v_{i+1})|=2$ and $L'(v)=\{\varphi(v_i),c\}$ if $|L(v_{i+1})|=3$,
$L'(v_{i+1})=(L(v_{i+1})\setminus \{\varphi(v_i)\})\cup \{c\}$ and $L'(x)=L(x)$ for other
vertices $x\in V(G')$.  Note that $G'$ is not $L'$-colorable.  Furthermore, by the choice of $X$, if
$k=4$ then $|L(v_k)|=3$, hence a path $R$ of length at most $5$ is precolored in $G'$ with the list assignment $L'$.  Furthermore,
since $T$ contains the edge $v_iv_{i+1}$, we have $d_{G'}(R)\ge B-5\ge r(R)$.

By \refclaim{cl-nochords} and
\refclaim{cl-nonearchords}, $R$ is an induced path and no vertex with list of size two other than
$v_s$, $v_{i+1}$ and $v$ is adjacent to it, and since $\ell(C)\ge 9$, it follows that~(S3) and~(Q) are satisfied.  Since $T$ is a $4$-cycle,
$v$ cannot be at distance at most one from a triangle in $G'$, thus~(T) holds as well.  By the minimality of $G$,
we conclude that $G'$ violates~(OBSTb); let $H$ be a minimal non-$L'$-colorable subgraph of $G'$.  We have
$\ell(R)\ge 4$, and consequently, $i\ge 1$.  If $i=1$, then we also have $|L'(v)|=1$, and thus $|L(v_2)|=2$ and
$|L(v_1)|=3$; let $w=v_1$.  If $i\ge 2$, then choose $w\in \{v_1,v_2\}$ such that $|L(w)|=3$. 
Such a vertex $w$ has degree at least three in $G$, and thus it has degree at least three in $H$ (even if $w$
is an endvertex of the precolored path of $H$, since then $w$ has a neighbor $x$ with list of size two in $H$, and the
edge $wx$ belongs to $C$ by \refclaim{cl-nonearchords}).

Let us note that there exist at least two ways
how to choose $\varphi$ that differ in the color of $w$ (and possibly of other vertices).
Since neither of them extends to $G'$, inspection of the relevant configurations
in Figure~\ref{fig-obstb} shows that $H$ is OBSTb1, OBSTb1a, OBSTb1b, OBSTb3 or OBSTb5. Since the edge $v_{i-1}v_i$
is not incident with $T$, the vertex $v_i$ has degree at least three in $G$, and thus also in $H$; therefore,
$H$ is OBSTb3 and $|L'(v)|=1$.  However, \refclaim{cl-inter} and \refclaim{cl-schords} imply that $V(G)=V(H)\setminus\{v\}$,
contradicting \refclaim{cl-length}.
\end{proof}

\claim{cl-B2false}{$T$ is a triangle.}
\begin{proof}
Suppose that $T$ is a $4$-cycle.  By~\refclaim{cl-neart}, \refclaim{cl-neartb} and \refclaim{cl-A3false},
it follows that (B2) holds, i.e., $T$ contains $z_1$ or $z_2$.  
If $v_4\in V(T)$, then let $Y=\{v_3,v_4\}$.  If $v_4\not\in V(T)$ and $z_2\in V(T)$,
then let $Y=\{v_3,v_4,z_2\}$; otherwise let $Y=\{v_3,v_4,z_2,z_1\}$.

If both $z_1$ and $z_2$ had a neighbor in $P$, then observe that $z_2$ is adjacent to $p_0$
and the $2$-chord $v_4z_2p_0$ contradicts \refclaim{cl-nonearchords}.  Therefore, at most one of $z_1$ and $z_2$ has a neighbor in $P$.
It follows that there exists an $L$-coloring $\psi$ of the subgraph $G_0$ of $G$ induced by $Y\cup V(P) \cup \{z_1, v_1,v_2\}$ such
that $\psi(v_4)\not \in L(v_5)$.  Let $G'=G-Y$ and let $L'$ be the list assignment such that
$L'(x)=\{\psi(x)\}$ for $x\in \{v_1,v_2\}$, $L'(x)=L(x)\setminus \{\psi(y)\}$
if $x\in V(G')\setminus \{v_1,v_2\}$ has a neighbor $y\in Y$, and $L'(x)=L(x)$ otherwise.  The graph $G'$ with the precolored path
$p_0p_1p_2v_1v_2$ is not
$L'$-colorable. 

Note that $Y$ was chosen so that it intersects $Y$ in exactly one vertex.
Since $z_2$ has degree at least three, \refclaim{cl-inter} and \refclaim{cl-nonearchords}
imply that $G'$ satisfies~(I) and~(S2).  Obviously, (T) is satisfied as well.  Suppose that
a vertex $v$ with $|L'(v)|=2$ has two neighbors in $p_0p_1p_2v_1v_2$.  By \refclaim{cl-nochords}, we have $|L(v)|=3$,
hence $v$ is adjacent to a vertex in $Y$.  Suppose that $v\neq z_1$.  Since~(A3) is false, $v$ is not adjacent to $z_1$;
but then \refclaim{cl-inter} implies that $z_1$ has degree two, which is a contradiction.  Therefore, $v=z_1$, and
since $\psi$ assigns a color to $z_1$, $G'$ satisfies~(Q).  

We conclude that $G'$ violates~(OBSTb); let $H$ be the image of OBSTb1 or OBSTb2 in $G'$.
Note that $v_2$ is adjacent to a vertex $x$ such that $|L'(x)|=2$.  Since $z_1$ has
degree at least three, \refclaim{cl-inter} implies that $x=z_1$, and thus $Y=\{v_3,v_4,z_2\}$.  Furthermore,
note that neither $z_1$ nor $z_2$ has a neighbor in $P$, thus there exist at least two choices for the
coloring $\psi$ that differ exactly in the colors of $z_1$ and $z_2$.  Since neither of these colorings
extends to $G'$ and both OBSTb1 and OBSTb2 are $1$-determined, $z_2$ has a neighbor in $H$ different from $z_1$.
Furthermore, $H$ is not OBSTb2, since OBSTb2 is $2$-determined and $z_2$ cannot have more than two neighbors in $H$
whose lists according to $L'$ have size two.  However, if $H$ is OBSTb1, then $p_0$ and $v_4$ are joined by a $3$-chord,
and by \refclaim{cl-4chord}, $v_5$ is a common neighbor of $p_0$ and $v_4$.  This contradicts \refclaim{cl-length}.
\end{proof}

We now similarly exclude some of the cases in that $T$ is a triangle.
\claim{cl-B4false}{(B4) is false.}
\begin{proof}
Suppose that (B4) holds.  We consider the two subcases of (B4) (whether $w_3$ is adjacent to $z_1$ or $z_2$)
separately.
\begin{itemize}
\item {\em Suppose first that $v_2w_1,z_1w_3, z_2w_5\in E(G)$.}  Note that $v_1$ may be equal to $w_1$.
Let $S=L(v_2)\setminus (L(v_1)\setminus L(p_2))$.
If $S\not\subseteq L(z_1)$, then let $L'$ be a list assignment such that $L'(v_1)=L(v_1)\setminus L(p_2)$,
$L'(v_2)\subseteq S\setminus L(z_1)$ has size one and $L'(x)=L(x)$ otherwise.  Observe that the graph $G-\{z_1,w_3\}$
with the precolored path $p_0p_1p_2v_1v_2$ is not $L'$-colorable
and that it satisfies the assumptions of Theorem~\ref{thm-maingen} (it satisfies~(OBSTb), since $v_3$ is the only neighbor of
$v_2$ with list of size two and $v_1v_2v_3$ cannot be a subpath of a $5$-cycle). This contradicts the minimality of $G$;
hence, assume $S\subseteq L(z_1)$.

If $S\neq L(v_3)$, then choose a color $c\in S\setminus L(v_3)$; let
$L'$ be the list assignment obtained from $L$ by removing $c$ from the lists of neighbors of $v_2$ other than $v_1$.
Note that $G-v_2$ is not $L'$-colorable, and as in \refclaim{cl-neart}, we conclude that $G-v_2$ is a smaller
counterexample to Theorem~\ref{thm-maingen}, which is a contradiction.  Similarly, we exclude the case
that a color $c'\in L(v_4)\setminus L(v_5)$ does not belong either to $S$ or to $L(z_2)$.  Therefore,
there exists a color $c'\in S\cap L(z_2)$.  By \refclaim{cl-inter} and \refclaim{cl-nonearchords},
$z_2$ is not adjacent to a vertex of $P$.

Suppose that $w_1$ and $w_5$ do not have a common neighbor.  Let $G'$ be the graph obtained from $G-\{w_3,z_1,v_3\}$
by identifying $v_2$ with $z_2$ to a new vertex $v$.  Let $L'$ be the list assignment such
that $L'(v_1)=L(v_1)\setminus L(p_2)$, $L'(v)=\{c'\}$, $L'(v_4)=\{c''\}$ for a color $c''\in L(v_4)\setminus \{c'\}$
such that $L(v_3)\neq \{c',c''\}$ and $L'(x)=L(x)$ otherwise.  Note that $t(G')\ge B$, since
both $v_2$ and $z_2$ are at distance at least $B-2$ in $G$ from any $(\le\!4)$-cycle different from $T$.
Since $w_1$ and $w_5$ do not have a common neighbor, \refclaim{cl-inter} implies that $v$ is not contained
in any $(\le\!4)$-cycle in $G'$.  Since $G'$ with the precolored path $p_0p_1p_2v_1vv_4$
is not $L'$-colorable, we conclude that it violates~(OBSTb).
Let $H$ be the image of a relevant configuration from Figure~\ref{fig-obstb} contained in $G'$.
By \refclaim{cl-nonearchords}, $v$ is not adjacent to a vertex with list of size two, hence
$v_4$ belongs to $H$.  Note that $v$ has degree at least three in $H$, as otherwise
$G$ contains a cycle $K$ of length at most $7$ such that $v_1v_2v_3v_4\subset K$ and the open disk bounded by $K$
contains $z_1$ and $w_3$, contradicting \refclaim{cl-inter}.  The inspection of the graphs in Figure~\ref{fig-obstb} shows that
$v$ has degree exactly three and that both internal faces incident with $v$ in $H$ have length five.
Similarly, \refclaim{cl-inter} implies that $vw_5\in E(H)$ and $w_1=v_1$.  But then $v_1vw_5w_4w_2$ is the only $5$-cycle in $G'$
containing the edge $v_1v$, thus $v_1w_2\in E(H)$ and $v_1$ has degree at least three in $H$.  This is only possible if $H$ is OBSTb4.
However, then $H$ is the graph in Figure~\ref{fig-B4}(a), which is $L$-colorable.

\begin{figure}
\center{\includegraphics{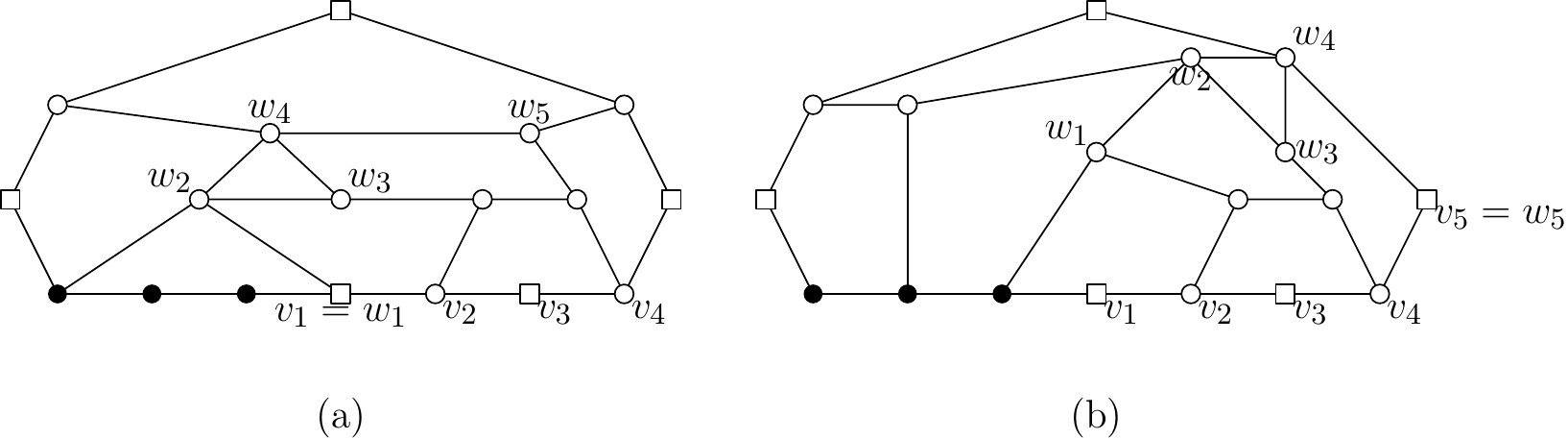}}
\caption{Configurations in the case that~(B4) holds.}
\label{fig-B4}
\end{figure}

So, suppose that $w_1$ and $w_5$ have a common neighbor $w$, and thus by \refclaim{cl-inter}, $w_2$ and $w_4$
have degree three.  By \refclaim{cl-nonearchords}, $|L(w)|=3$.  Let $\psi$ be an $L$-coloring of $p_2v_1v_2v_3v_4z_2$ such that
$\psi(v_4)=c'$.  Let $d$ be a color in $L(z_1)\setminus\{\psi(v_2),\psi(z_2)\}$.
If $w_1\neq v_1$, then let $d'$ be a color in
$L(w_1)\setminus\{\psi(v_2)\}$ such that $L(w_2)\setminus \{d'\}=L(w_3)\setminus \{d\}$,
if such a color exists, and an arbitrary color in $L(w_1)$ otherwise.
Note that $z_2$ has no neighbor in $P$ by \refclaim{cl-inter}; this enables us to choose $\psi$, $d$ and $d'$
so that the following additional conditions (which apply to mutually exclusive situations) hold:
\begin{itemize}
\item If $w_1$ is adjacent to $p_1$, then $L(w_1)\neq L(p_1)\cup \{\psi(v_2),d'\}$.
\item If $w_1=v_1$, then either $\psi(v_1)\not\in L(w_2)$ or $L(w_2)\setminus \{\psi(v_1)\}\neq L(w_3)\setminus\{d\}$.
\item If $w_1\neq v_1$, $w_1$ is not adjacent to $p_1$ and $p_1$ has exactly one neighbor $z\not\in V(C)$, then
$L(z)\setminus L(p_1)\neq L(w_5)\setminus \{\psi(z_2)\}$.
\end{itemize}
Let $G'=G-\{w_2,w_3,w_4,z_1, z_2,v_3,v_4\}$, with the list assignment $L'$ such that $L'(x)=\{\psi(x)\}$ for $x\in\{v_1,v_2\}$,
$L'(w_1)=L(w_1)\setminus \{d'\}$ if $w_1\neq v_1$, $L'(x)=L(x)\setminus \{\psi(y)\}$ for every vertex $x$ with a neighbor $y\in \{v_4,z_2\}$
and $L'(x)=L(x)$ otherwise.  The choice of $\psi$ and $d'$ ensures that every $L'$-coloring of $G'$ would extend to an $L$-coloring of $G$,
thus the graph $G'$ with the precolored path $p_0p_1p_2v_1v_2$ is not $L'$-colorable.

If $w_1$ had a common neighbor with $v_4$ or $z_2$,
then \refclaim{cl-inter} would imply that $w$ has degree two; hence \refclaim{cl-nonearchords} implies that $G'$ satisfies~(I).
If $G'$ violated~(Q), then \refclaim{cl-inter} and \refclaim{cl-nonearchords} would imply that $w_1$ is adjacent to $p_1$.  But, in that case
the choice of $\psi$, $d$ and $d'$ ensures that~(Q) holds.  Hence, $G'$ violates~(OBSTb); let $H$ be the image of OBSTb1 or OBSTb2 in $G'$.
Then $v_2$ is adjacent to a vertex with list of size two, and by \refclaim{cl-inter}, this vertex is $w_1$; hence, we have $w_1\neq v_1$.  Note that there exists a path $w_1xy$ in $H$
such that $y$ has list of size two.  By \refclaim{cl-nonearchords}, we have $|L(y)|=3$, hence $y$ is adjacent to $z_2$ or $v_4$.  Since $w$
has degree at least three, \refclaim{cl-inter} implies $x=w$ and $y=w_5$.  If $H$ were OBSTb1, then $w_5$ would be adjacent to $p_0$, and by \refclaim{cl-4chord} applied to
$v_4z_2w_5p_0$, we would have that $v_5$ is adjacent to $p_0$, contradicting \refclaim{cl-length}.  It follows that $H$ is OBSTb2.  Note that $w_1$ is not adjacent to $p_1$,
thus by the choice of $\psi$, the unique neighbor $z$ of $p_1$ in $V(H)\setminus V(C)$ satisfies $L'(z)\setminus L(p_1)\neq L'(w_5)$.  However, then $H$ is $L'$-colorable, contradicting the hypothesis
that~(OBSTb) does not hold.

\item {\em Next, consider the case that $z_1w_1,z_2w_3, v_4w_5\in E(G)$.} Note that $w_5$ may be equal to $v_5$.
Similarly to the previous case, we conclude that $L(v_2)\setminus (L(v_1)\setminus L(p_2))=L(v_3)\subseteq L(v_4)$,
that each color $c'\in L(v_4)\setminus L(v_5)$ belongs to both $L(v_3)$ and $L(z_2)$
and that $L(z_1)=L(z_2)$---otherwise, we can color a subset $Y$ of $X\cup \{z_2\}$, remove
the colors of the vertices of $Y$ from the lists of their neighbors, and obtain a smaller counterexample to Theorem~\ref{thm-maingen}.

If $L(z_2)\neq L(v_4)$, then let $\psi$ be an $L$-coloring of $p_2v_1v_2v_3v_4$ such that $\psi(v_4)\not\in L(z_2)$.
Let $G'$ be the graph obtained from $G-\{v_3,z_2,w_3\}$ by adding the edge $v_2v_4$.
Let $c$ be a color that does not appear in any of the lists and $L'$ the list assignment such that
$L'(x)=\{\psi(x)\}$ for $x\in \{v_1,v_2\}$, $L'(v_4)=\{c\}$, $L'(x)=(L(x)\setminus \{\psi(v_4)\})\cup \{c\}$ for
all other vertices $x$ adjacent to $v_4$, and $L'(x)=L(x)$ otherwise.  Note that $G'$ with the precolored path $R=p_0p_1p_2v_1v_2v_4$
is not $L'$-colorable.  Also, by \refclaim{cl-inter}, the edge $v_2v_4$ is not incident with a $(\le\!4)$-cycle, and thus $t(G')\ge B$.
Furthermore, the distance from $v_2$ to $T$ in $G$ is three, thus $d_{G'}(R)\ge B-7\ge r(5)$.
By the minimality of $G$, we conclude that $G'$ violates (OBSTb).
Since $v_2$ is not incident with a vertex with list of size two and every cycle containing the edge $v_2v_4$ has length
at least six, $G'$ contains an image $H$ of OBSTb1a or OBSTb2a.  Note that $v_2$ has degree two in $H$, and thus
$z_1\not\in V(H)$.  Let $F'$ be the cycle bounding the internal face of $H$ incident with $v_2v_4$,
and let $F$ be the cycle in $F$ obtained from $F'$ by replacing $v_2v_4$ with $v_2v_3v_4$.
Observe that $z_1,z_2,w_3\in \inter_G(F)$, and since $\ell(F)=7$, this contradicts \refclaim{cl-inter}.
Therefore, we can assume that $L(z_2)=L(v_4)$.

Let us now consider the case that $p_1$ is adjacent to $z_1$.  Let $G'=G-\{p_2,v_1,v_2,v_3,v_4,z_2,w_3\}$ and
let $\psi$ be an $L$-coloring of the subgraph of $G$ induced by $\{p_1,p_2,v_1,v_2,v_3,v_4,z_1,z_2,w_1,w_2\}$
such that $\psi(v_4)\not\in L(v_5)$ and $\psi(w_2)\not\in L(w_3)\setminus \{\psi(z_2)\}$.
Let $L'$ be the list assignment such that $L'(x)=\{\psi(x)\}$ for $x\in \{z_1,w_1,w_2\}$,
$L'(x)=L(x)\setminus\{\psi(v_4)\}$ if $x$ is a neighbor of $v_4$ and $L'(x)=L(x)$ otherwise.
Note that $G'$ with the precolored path $p_0p_1z_1w_1w_2$ is not $L'$-colorable.
By \refclaim{cl-inter}, neither $w_1$ nor $w_2$ has a common neighbor with $v_4$ (since if $w_5\neq v_5$, then
$w_5$ has degree at least three).  By \refclaim{cl-nonearchords}, $w_1$ has no neighbor with list of size two in $G'$, and
since $w_1$ has degree at least three, \refclaim{cl-inter} implies that $G'$ satisfies~(Q) and (S3).
By the minimality of $G$, we conclude that $G'$ violates~(OBSTb).
Because $w_1$ has degree at least three, \refclaim{cl-inter} implies that $G'$ contains OBSTb2.
In particular, $w_2$ has a neighbor $y$ with $|L'(y)|=2$, and as we observed before, $|L(y)|=2$.
Consider the path $Q=v_4w_5w_4w_2y$.  If $Q$ is not a subpath of $C$, then $v_4$ and $w_2$ have a common neighbor by \refclaim{cl-4chord},
implying that $w_5$ has degree two, which is a contradiction.  Therefore, $w_5=v_5$ and $Q\subset C$.  However, then there exists an $L$-coloring $\psi'$
of the subgraph of $G$ split off by the $3$-chord $p_1z_1w_1w_2$ that differs from $\psi$ exactly in the colors of $w_1$ and $w_2$,
and at least one of $\psi$ and $\psi'$ extends to an $L$-coloring of $G$.  This is a contradiction.
Therefore, we can assume that $p_1z_1\not\in E(G)$.

Suppose next that $w_1$ and $w_5$ do not have a common neighbor.
Then, let $G'$ be the graph obtained from $G-\{v_3,z_2,w_3\}$ by identifying $z_1$ with $v_4$
to a new vertex $v$, with the list assignment $L'$ such that $L'(v)=L(v_4)\setminus L(v_3)$,
$L'(v_1)=L(v_1)\setminus L(p_2)$, $L'(v_2)\subseteq L(v_2)\setminus (L'(v)\cup L'(v_1))$ has size one
and $L'(x)=L(x)$ otherwise.  Observe that $G'$ satisfies $t(G')\ge B$ and that it is not $L'$-colorable.
The precolored path of $G'$ is $p_0p_1p_2v_1v_2v$.
Since $p_1$ is not adjacent to $z_1$, \refclaim{cl-nonearchords} implies that $G'$ satisfies~(S3).
No vertex with list of size two is adjacent to $p_1$ or $v_2$ and the only vertex with list of size two
adjacent to $v$ is $v_5$, thus $G'$ satisfies~(Q).  We conclude that $G'$ violates~(OBSTb); let $H$ be the image
of the relevant configuration from Figure~\ref{fig-obstb} that is contained in $G'$.  By \refclaim{cl-nonearchords}, $v_2$ has degree
two in $H$.  If $v$ had degree two, then $v_1v_2v_3v_4$ would be a subpath of a cycle $K$ of length at most seven in $G$,
such that the open disk bounded by $K$ contains $z_1$, $z_2$ and $w_3$.  This contradicts \refclaim{cl-inter}, hence
$v$ has degree three in $H$ and $H$ is OBSTb4.  Let $x$ be the common neighbor of $p_2$ and $v$ in $H$.  By \refclaim{cl-nonearchords},
$x$ is adjacent to $z_1$ in $G$.  In $H$, there exists a path $xyzv_5$ (for some vertices $y$ and $z$), and by \refclaim{cl-inter} we have $x=w_1$,
$y=w_2$, $z=w_4$ and $v_5=w_5$.  Then $G$ is the graph depicted in Figure~\ref{fig-B4}(b), which is $L$-colorable.
Therefore, $w_1$ and $w_5$ have a common neighbor $w$. 
By \refclaim{cl-nonearchords},
$|L(w)|=3$, and by \refclaim{cl-inter}, $w_2$ and $w_4$ have degree three.

Suppose that $w_1$ has no neighbor in $P$.
Then, there exists an $L$-coloring $\psi$ of the subgraph $G_0$ of $G$ induced by $V(P)\cup \{v_1,v_2,v_3,v_4,z_1,z_2,w_1\}$ such that 
$\psi(v_4)\not\in L(v_5)$ and either $\psi(w_1)\not\in L(w_2)$ or $L(w_2)\setminus\{\psi(w_1)\}\neq L(w_3)\setminus \{\psi(z_2)\}$.
Let $G'=G-\{v_3,v_4,z_2,w_2,w_3,w_4\}$ with the list assignment $L'$ such that
$L'(x)=\{\psi(x)\}$ for $x\in \{v_1,v_2,z_1\}$, $L'(w_1)=\{\psi(z_1),\psi(w_1)\}$, $L'(x)=L(x)\setminus \{\psi(v_4)\}$
if $x$ is a neighbor of $v_4$ and $L'(x)=L(x)$ otherwise.  Note that $G'$ is not $L'$-colorable and has precolored
path $p_0p_1p_2v_1v_2z_1$.
By \refclaim{cl-inter} and \refclaim{cl-nonearchords}, $G'$ satisfies~(I), and since $p_1$ is not adjacent to $z_1$, $G'$ satisfies~(S3).
Since $w_1$ has no neighbor in $P$ and $v_2$ has no neighbor with list of size two, $G'$ also satisfies~(Q).
We conclude that~(OBSTb) is violated.  Let $H$ be the image of the relevant configuration depicted in Figure~\ref{fig-obstb} that is contained in $G'$.
The inspection of the relevant configurations shows that if $v_2$ has degree three in $H$, then it is incident with
a path $v_2xyz$ with $|L'(z)|=2$, where $z\neq w_1$.  By \refclaim{cl-inter}, $z$ is not a neighbor of $v_4$, hence $|L(z)|=2$.
However, that contradicts \refclaim{cl-nonearchords}.  Therefore, $v_2$ has degree two in $H$.  Similarly, we conclude that
$v_1$ has degree two in $H$, thus $H$ is OBSTb1a, OBSTb1b or OBSTb4.  
Note that there are at least two possible choices for $\psi$ that match on $v_1$, $v_2$, $v_3$ and $v_4$
and differ on $z_1$ (and possibly also on $z_2$ or $w_1$).  Since neither of the choices extends to $G'$,
we conclude that $H$ is OBSTb1a (and both choices for $\psi$ assign the same color to $w_1$).  But then there exists a path
$v_2z_1x'y'p_0$ with $|L'(y')|=2$, where $x'$ is adjacent to $p_2$.  
By \refclaim{cl-schords} applied to $p_2x'y'$, we have $|L(y')|=3$, thus $y'$ is adjacent to $v_4$.
However, then $v_4y'p_0$ is a $2$-chord contradicting \refclaim{cl-nonearchords}.

Therefore, it remains to consider the case that $w_1$ has a neighbor $p_i\in V(P)$.  By \refclaim{cl-inter}, $z_1$ has degree three.
Observe that there exist colors $c_1\in L(w_1)\setminus L(p_i)$ and $c_2\in L(v_2)\setminus (L(v_1)\setminus L(p_2))$
such that $c_1=c_2$ or $c_1\not\in L(z_1)$ or $c_2\not\in L(z_1)$.
Let $G'$ be the graph obtained from $G-\{p_{i+1},\ldots, p_2,v_1, z_1,z_2,w_2,w_3,w_4\}$ by identifying
$w_1$ with $v_2$ to a new vertex $v$.  By \refclaim{cl-inter}, $v$ is not incident with a $(\le\!4)$-cycle, thus
$t(G')\ge B$ and $d(p_0\ldots p_iv)\ge B-4>r(3)$.  Let $c$ be a new color that does not appear in any of the
lists and $L'$ the list assignment such that $L'(v)=\{c\}$, $L'(v_3)=(L(v_3)\setminus \{c_2\})\cup \{c\}$,
$L'(x)=(L(x)\setminus\{c_1\})\cup \{c\}$ if $x$ is a neighbor of $w_1$ and $L'(x)=L(x)$ otherwise.
Observe that $G'$ is a counterexample to Theorem~\ref{thm-maingen} smaller than $G$, which is a contradiction.
\end{itemize}
Therefore, we obtained a contradiction in both subcases, showing that (B4) is false.
\end{proof}

\claim{cl-A4false}{(A4) is false.}
\begin{proof}
Suppose that~(A4) holds.  Note that $w_1\neq v_1$ and $w_5\neq v_5$, since
$v_2$ and $v_4$ have degree at least three.

Let us first suppose that there exists an $L$-coloring $\psi$ of
the subgraph induced by $V(P)\cup \{v_1,v_2,v_3,v_4,w_1,w_2\}$ such that
$\psi(v_4)\not\in L(v_5)$ and $|L(w_3)\setminus \{\psi(v_3),\psi(w_2)\}|\ge 2$.
Then, let $G'=G-\{v_3,v_4,w_3\}$ with the list assignment $L'$ such that
$L'(x)=\{\psi(x)\}$ for $x\in\{v_1,v_2,w_1\}$, $L'(w_2)=\{\psi(w_1),\psi(w_2)\}$,
$L'(x)=L(x)\setminus\{\psi(v_4)\}$ if $x$ is a neighbor of $v_4$ and $L'(x)=L(x)$ otherwise.
Note that $G'$ is not $L'$-colorable and has precolored path $p_0p_1p_2v_1v_2w_1$.
The choice of $\psi$ ensures that~(S3) holds.
By \refclaim{cl-inter}, no neighbor of $w_2$ is adjacent to $v_4$, as otherwise
$w_5$ would have degree two; thus, \refclaim{cl-nonearchords} implies that~(I) holds.
As $w_1$ has degree at least three, \refclaim{cl-inter} implies that $w_2$ is not
adjacent to a vertex of $P$ and~(Q) holds.  Therefore, $G'$ violates~(OBSTb) and contains an image $H$ of
one of the relevant configurations drawn in Figure~\ref{fig-obstb}.  No neighbor of $v_2$ has list of size two, thus $w_1$ belongs
to $H$.  If $v_1$ or $v_2$ had degree greater than two in $H$, then $G$ would contain a $(\le\!3)$-chord contradicting
\refclaim{cl-schords} or \refclaim{cl-nonearchords}; hence, $H$ is OBSTb1a, OBSTb1b or OBSTb4.  Since $w_1$ has degree at least three,
$H$ is not OBSTb1a.  If $H$ were OBSTb1b, then $G$ would contain a $(\le\!3)$-chord starting in $v_2$ contradicting \refclaim{cl-nonearchords}.
Finally, if $H$ is OBSTb4, then let $w_2yz$ be the path in the boundary of the outer face of $H$ with $|L'(z)|=2$.  If $z$ is a neighbor of
$v_4$, then by \refclaim{cl-inter} we have $y=w_4$ and $z=w_5$; however, then there exists a path $v_4w_5y'z'$ in the boundary of
the outer face of $H$ with $|L(z')|=2$, contradicting \refclaim{cl-nonearchords}.  Otherwise, we have $|L(z)|=2$.  Consider the subgraph
split off by $v_3w_3w_4w_2yz$.  Since both $v_3$ and $z$ have lists of size two and $w_3$ and $y$ have no common neighbor,
this subgraph satisfies the assumptions of Theorem~\ref{thm-maingen}, contradicting the minimality of $G$.

Therefore, we can assume that such a coloring $\psi$ does not exist.  By \refclaim{cl-inter} and \refclaim{cl-nonearchords},
this can only happen if $w_1$ is adjacent to $p_1$.  Since $w_5$ has degree at least three, \refclaim{cl-4chord}
implies that $w_4$ has no neighbor in $P$.  It follows that there exists an $L$-coloring $\psi'$
of the subgraph induced by $V(P)\cup \{v_1,v_2,v_3,v_4,w_1,w_2,w_3,w_4\}$ such that
$\psi'(v_4)\not\in L(v_5)$.  Consider the graph $G'=G-\{p_2,v_1,v_2,v_3,v_4,w_3\}$ with the list assignment $L'$
such that $L'(x)=\{\psi'(x)\}$ for $x\in\{w_1,w_2,w_4\}$,
$L'(x)=L(x)\setminus\{\psi'(v_4)\}$ if $x$ is a neighbor of $v_4$ and $L'(x)=L(x)$ otherwise.
Note that $G'$ is not $L'$-colorable and has precolored path $p_0p_1w_1w_2w_4$.
By \refclaim{cl-inter} and \refclaim{cl-nonearchords}, $w_2$ is not adjacent to any vertex with list of size two
and $w_5$ is the only neighbor of $w_4$ with list of size two.  Note that $w_5$ is not adjacent to
$p_0$ by \refclaim{cl-nonearchords}.  Furthermore,  $w_5$ is not adjacent to $p_1$, since (similarly to \refclaim{cl-4chord}) we
would have that the path $p_0p_1w_5v_4$ splits off a $5$-face, implying that $v_5$ is adjacent to $p_0$ and contradicting
\refclaim{cl-length}.  It follows that $G'$ satisfies~(Q).  Furthermore, $G'$ satisfies~(OBSTb), since by \refclaim{cl-nonearchords}
it does not contain a path $v_4w_5xy$ with $|L(y)|=2$.  Therefore, $G'$ a counterexample to Theorem~\ref{thm-maingen} smaller than $G$,
which is a contradiction.  It follows that (A4) is false.
\end{proof}

\claim{cl-B3false}{(B3) is false.}
\begin{proof}
Suppose that~(B3) holds.  Let $\psi$ be an $L$-coloring of the subgraph $G_0$ of $G$ induced by $V(P)\cup \{v_1,v_2,\ldots, v_6,w_2\}$
such that $\psi(v_6)\not\in L(v_7)$ ($w_2$ has no neighbor in $P$ by \refclaim{cl-inter} and \refclaim{cl-4chord}, thus such a coloring exists).
Let $L'$ be the list assignment such that
$L'(x)=\{\psi(x)\}$ for $x\in \{v_1,v_2,v_3\}$, $L'(v_4)=\{\psi(v_3),\psi(v_4)\}$,
$L'(x)=L(x)\setminus\{\psi(y)\}$ if $x$ has a neighbor $y\in \{w_2,v_6\}$
and $L'(x)=L(x)$ otherwise.  The graph $G'=G-\{v_5,v_6,w_2\}$ with the precolored induced path $p_0p_1p_2v_1v_2v_3$
is not $L'$-colorable, and by \refclaim{cl-inter}
and \refclaim{cl-4chord}, it satisfies~(I) and~(Q).  Furthermore, note that there exists another $L$-coloring $\psi'$ of
$G_0$ such that $\psi'(v_6)=\psi(v_6)$, $\psi'(w_2)=\psi(w_2)$, $\psi'(v_4)\neq\psi(v_4)$ and $\psi'(v_2)\neq\psi(v_2)$,
thus we can choose $\psi$ so that~(OBSTb) holds, unless $G'$ contains OBSTb3.  By \refclaim{cl-inter} and \refclaim{cl-nonearchords},
we then have that $z_1$ is adjacent to $p_1$ and $w_1$ is adjacent to $p_0$, and by \refclaim{cl-4chord} applied to $v_6w_2w_1p_0$,
$v_7$ is adjacent to $p_0$.  Nevertheless, such a graph $G$ is $L$-colorable.  This is a contradiction.
\end{proof}

By \refclaim{cl-neart} and \refclaim{cl-neartb} combined with \refclaim{cl-B2false}, \refclaim{cl-B4false}, \refclaim{cl-A4false}
and \refclaim{cl-B3false}, we have that
\claim{cl-subclaim}{$G$ satisfies~{\rm (A2)}, {\rm (A5)} or~{\rm (B1)}.}

Suppose that there exists a vertex $t\in V(T)\cap (V(P)\cup \{v_1\})$. Let $G'$ be the graph obtained from
$G$ by splitting $t$ to two vertices
$t'$ and $t''$ and adding a new vertex $v$ adjacent to $t'$ and $t''$, so that $G'$ is a plane graph
and $T$ becomes a $5$-face.
Let $\psi$ be an $L$-coloring of the subgraph of $G$ induced by $V(P)\cup \{t\}$,
$c$ a color that does not appear in any of the lists, and let $L'$ be the list assignment such that
$L'(t')=L'(t'')=\{\psi(t)\}$, $L(v)=\{c\}$ and $L'(x)=L(x)$ otherwise.
Note that $G'$ is not $L'$-colorable and contains a precolored path $R$ of length at most five.
By the minimality of $G$, we conclude that $G'$ violates~(OBSTb); let $H$ be the image of a relevant configuration from Figure~\ref{fig-obstb}
that is contained in $G'$.  In $H$, $v$ has degree two and is incident with a $5$-face.  If $t\in V(P)$, then
$\ell(R)=4$ and $H$ is OBSTb1 or OBSTb2; but then $G$ contains OBSTx1c or OBSTa6.  Therefore, we can assume that $t=v_1$.  If $H$ is OBSTb1, then $G$ contains OBSTx1;
if $H$ is OBSTb1a, then $G$ contains OBSTx1a;
if $H$ is OBSTb1b, then $G$ contains OBSTx1b;
if $H$ is OBSTb2b, then $G$ contains OBSTx4; and 
if $H$ is OBSTb5, then $G$ contains OBSTx2b.
It follows that $H$ is OBSTb4 or OBSTb6.  By \refclaim{cl-inter} and \refclaim{cl-schords}, we conclude that $G$ is equal
to the graph obtained from $H$ by removing $v$ and identifying $t'$ with $t''$.  However, then $G$ is $L$-colorable.
This is a contradiction; therefore,
\claim{cl-trfar}{we have $V(T)\cap (V(P)\cup \{v_1\})=\emptyset$.}

Let $X'$ be the subset of $\{v_s,v_{s-1},v_{s-2},v_{s-3}\}$ defined symmetrically to $X$ on the other side of $P$
(i.e., if $|L(v_s)|=3$, $|L(v_{s-1})|=2$ and $|L(v_{s-2})|=|L(v_{s-3})|=3$, then $X'=\{v_{s-1}\}$, and so on).
Note that a claim symmetric to \refclaim{cl-subclaim} holds, i.e., there exists a triangle $T'$
that either intersects $X'$ (the case~(A2)) or contains one of the vertices $z'_1$ or $z'_2$ incident with a $5$-face
$v_{s-1}v_{s-2}v_{s-3}z'_2z'_1$ (the cases~(B1) and~(A5)).
Observe that $d(T,T')\le 8<B$, and thus $T=T'$.

If (A5) or (B1) holds, then let $b$ be the first vertex in the sequence $v_2$, $z_1$, $z_2$ and $v_4$ that is incident with $T$;
otherwise, let $b$ be the first vertex in the sequence $v_2$, $v_3$ and $v_4$ that is incident with $T$.
Symmetrically, let $b'$ be the first such vertex among $v_{s-1}$, $z'_1$, $z'_2$ and $v_{s-3}$
or among $v_{s-1}$, $v_{s-2}$ and $v_{s-3}$.
Note that either $b=b'$ or $b$ and $b'$ are adjacent.

Suppose now that $V(T)\subseteq V(C)$.  In this case~(A5) does not hold.  By \refclaim{cl-nosimt}, we have $b\in \{v_3,v_4\}$ and
$b'\in \{v_{s-2},v_{s-3}\}$.  If $b'=v_{s-3}$, then $v_{s-3}\in X'$ and by the choice of $X'$, we have $|L(v_{s-2})|=2$.
This contradicts \refclaim{cl-nosimt}.  Thus $b'=v_{s-2}$ and symmetrically, $b=v_3$.  By \refclaim{cl-nosimt},
we have $|L(v_2)|=|L(v_{s-1})|=3$, and by \refclaim{cl-lv12l}, $|L(v_1)|=2$.  However, then $X=\{v_1\}$
and $b\not\in X$, which is a contradiction. It follows that
\claim{cl-tns}{$T$ shares at most two vertices with $C$.}

Let us now show that $b\neq v_4$ and $b'\neq v_{s-3}$.  It suffices to prove that
\claim{cl-no3no4}{if $v_4\in X\cap V(T)$, then $v_3\in V(T)$; and if $v_{s-3}\in X'\cap V(T)$, then $v_{s-2}\in V(T)$.}
\begin{proof}
Suppose that say $v_{s-3}\in X'\cap V(T)$ and $v_{s-2}\not\in V(T)$.  The choice of $X'$ implies that
$|L(v_{s-3})|=3$ and $|L(v_{s-2})|=|L(v_{s-4})|=2$.

If $\{v_2,v_3,v_4\}\cap V(T)=\emptyset$, then $b\in \{z_1,z_2\}$;
let $v\in \{v_2,v_4\}$ be the neighbor of $b$.  By \refclaim{cl-nonearchords} applied to $vbv_{s-3}$, we
conclude that $T=vbv_{s-3}$, contrary to the assumption that $v\not\in V(T)$.  It follows that a vertex of
$\{v_2,v_3,v_4\}\cap V(T)$ is equal to either $v_{s-3}$ or $v_{s-4}$.  By \refclaim{cl-length}, we have
$6\le s\le 8$.

If $s=8$, then $v_4=v_{s-4}$.  However, let us recall that $|L(v_{s-4})|=2$,
hence neither (A5) nor (B1) holds.  Furthermore, we have $v_4\not\in X$, and thus (A2) does not
hold either.  This is a contradiction.


Therefore, $s\le 7$.  By \refclaim{cl-nochords} and \refclaim{cl-tns}, $C$ has no chords.  If $t\in V(T)\setminus V(C)$
has a neighbor $w\in V(C)$, then $wt$ is an edge of $T$, as otherwise \refclaim{cl-inter} would imply that $v_{s-1}$ or $v_{s-5}$ (which have
lists of size three) has degree two.

Note that there exists at most one vertex with two neighbors in the path $p_0p_1p_2v_1v_2$
and another neighbor in $T$.
If such a vertex $v$ exists, then $v_{s-4}$ has degree two by \refclaim{cl-inter}, hence
$V(T)\cap V(C)=\{v_{s-3}\}$; let $Y=V(P)\cup V(T)\cup \{v_1,v_2,\ldots, v_{s-4},v\}$.
Otherwise, let $Y=V(P)\cup V(T)\cup \{v_1,v_2,\ldots, v_{s-4}\}$.
Observe that in both cases, there exists an $L$-coloring $\psi$ of the subgraph of $G$ induced by
$Y$ such that $\psi(v_{s-3})\not\in L(v_{s-2})$.
Let $G'=G-V(T)$ and let $L'$ be the list assignment given by
$L'(x)=\{\psi(x)\}$ for $x\in\{v_1,\ldots, v_{s-5}\}$, $L'(v_{s-4})=\{\psi(v_{s-5}),\psi(v_{s-4})\}$,
$L'(x)=L(x)\setminus \{\psi(y)\}$ if $x$ has a neighbor $y\in V(T)$, and $L'(x)=L(x)$ otherwise.  Note that $G'$ is not $L'$-colorable
and contains an induced precolored path $p_0p_1p_2v_1\ldots v_{s-5}$ of length at most four.
By \refclaim{cl-inter} and \refclaim{cl-nonearchords}, $G'$ satisfies~(I).  The choice of $Y$ and $\psi$ ensures that~(Q) holds as well.
Thus, $G'$ must violate~(OBSTb), and in particular $s=7$ and $v_3\not\in V(T)$.  Let $H$ be the image of a relevant configuration OBSTb1
or OBSTb2 contained in $G'$.  By \refclaim{cl-inter}, $v_s$ is the only vertex with list of size two adjacent to $p_0$, thus $v_s\in V(H)$.
Let $v_sxy$ be the path in the outer face of $H$ such that $|L'(y)|=2$.  By \refclaim{cl-inter}, we have $x=v_{s-1}$.
By~\refclaim{cl-nonearchords} applied to the appropriate path from $v_2$ to $y$ in $H$, we conclude that $y\neq v_{s-2}$,
and thus $y$ is a common neighbor of $v_{s-1}$ and a vertex of $V(T)\setminus V(C)$.  Similarly, we exclude the case that
$H$ is OBSTb1, hence $H$ is OBSTb2.  Thus, there exists a path of length three between $y$ and $v_2$ in the outer
face of $H$, say $yu_1u_2v_2$.  By \refclaim{cl-inter}, we have $u_2\neq v_3$,
hence $u_2$ has a neighbor in $T$.  This uniquely determines the graph $G$.  However, the described graph
is $L$-colorable, which is a contradiction.
\end{proof}

\claim{cl-bz1z2}{Either $\in \{z_1,z_2\}$ or $b'\in\{z'_1,z'_2\}$.}
\begin{proof}
If $b\not\in \{z_1,z_2\}$ and $b'\not\in\{z'_1,z'_2\}$, then since $\ell(C)>8$, $b\neq v_4$ and $b'\neq v_{s-3}$,
we have $s=6$, $b=v_3$ and $b'=v_{s-2}$.  The vertices $v_3$ and $v_4$ cannot both have list of size two.
By symmetry, we can assume that $|L(v_4)|=3$, and since $v_4\in X'$, the choice of $X'$ implies
$|L(v_5)|=2$, $|L(v_6)|=3$ and $|L(v_3)|=2$.  Consequently, $|L(v_2)|=3$ and $|L(v_1)|=2$.  Let $\psi$ be a coloring of the subgraph of $G$ induced
by $V(P)\cup V(T)\cup \{v_1,v_2\}$ such that $\psi(v_4)\not\in L(v_5)$; note that \refclaim{cl-inter}
implies that the vertex of $V(T)\setminus V(C)$ is not adjacent to a vertex of $P$, ensuring that such a coloring exists.
Let $G'=G-V(T)$ and let $L'$ be the list assignment such that $L'(v_1)=\{\psi(v_1)\}$, $L'(v_2)=\{\psi(v_1),\psi(v_2)\}$,
$L'(x)=L(x)\setminus \{\psi(y)\}$ if $x$ has a neighbor $y\in V(T)$, and $L'(x)=L(x)$ otherwise.
The graph $G'$ is not $L'$-colorable, and by \refclaim{cl-inter} and \refclaim{cl-nonearchords}, it satisfies~(I) and~(Q).
Since the precolored path $p_0p_1p_2v_1$ in $G'$ has length three, this contradicts the minimality of $G$.
\end{proof}
By symmetry, we can assume that $b\in \{z_1,z_2\}$, say.

\claim{cl-bz2}{We have $b=z_2$.}
\begin{proof}
Suppose that $b=z_1$.
By~\refclaim{cl-nonearchords}, we have $b\neq b'$ and $b'\in \{z'_1,z'_2\}$.  Let $V(T)=\{b,b',t\}$,
let $v'\in\{v_{s-1},v_{s-3}\}$ be the neighbor of $b'$ and let $G_2$ be the subgraph split off by
$v_2z_1b'v'$.

If $T\not\subset G_2$, then \refclaim{cl-4chord} implies that $v_2$ and $v'$ have a common neighbor
with list of size two, hence $v'=v_4=v_{s-3}$ and $b'=z_2$.  Since $\ell(C)>8$, we have $b'\neq z'_1$,
and thus $b'=z'_2$.
Note that $t\neq z'_1$, as otherwise $z'_1$ would be chosen to be $b'$.  If $t$ has a neighbor in $P$, then since $z'_1$ has degree at least three,
\refclaim{cl-inter} implies $tp_0,z'_1p_1\in E(G)$.  This would uniquely determine $G$; however, the resulting graph is $L$-colorable.
It follows that $t$ has
no neighbor in $P$.  Similarly, $z_1$ and $z_2$ have no neighbors in $C$ other than $v_2$ and $v_4$, and no neighbor of $v_7$ is
adjacent to a vertex of $T$.  There exists an $L$-coloring of the subgraph of $G$ induced by $V(P)\cup V(T)\cup \{v_1,v_2,v_3\}$
such that $|L(v_4)\setminus \{\psi(v_3),\psi(z_2)\}|\ge 2$.  Let $G'=G-(V(T)\cup \{v_3,v_4,v_5\})$ with the list assignment $L'$
such that $L'(v_1)=\{\psi(v_1)\}$, $L'(v_2)=\{\psi(v_1),\psi(v_2)\}$, $L'(x)=L(x)\setminus \{\psi(y)\}$ if $x$ has a neighbor $y\in V(T)$,
and $L'(x)=L(x)$ otherwise.  Observe that $G'$ with the precolored path
$p_0p_1p_2v_1$ satisfies the assumptions of Theorem~\ref{thm-maingen} and is not $L'$-colorable,
contradicting the minimality of $G$.

Let us now consider the case of $T\subset G_2$.  Since $t$ has degree at least three, we conclude that
the subgraph of $G$ split off by the path
$v_2z_1tb'v'$ is OBSTb1, $t=z_2$ and either $z'_2=z_2$, $b'=z'_1$ and $s=7$,
or $b'=z'_2$ and $s=9$.  Suppose that $b$ or $b'$ has a neighbor in $P$.  If $s=7$,
then the resulting graph would be $L$-colorable.  If $s=9$, then \refclaim{cl-inter} implies that $z'_1$ has degree two.
This is a contradiction, hence neither $b$ nor $b'$ has a neighbor in $P$.
Let $\psi$ be an $L$-coloring of the subgraph of $G$ induced by
$V(P)\cup V(T)\cup \{v_1,v_2,v_3\}$ such that $|L(v_4)\setminus \{\psi(v_3),\psi(t)\}|\ge 2$.
Let $G'=G-\{v_3,v_4,v_5,t\}$ if $s=7$ and $G'=G-\{v_3,v_4,v_5,v_6,v_7,t\}$ if $s=9$, with the list assignment $L'$ such that $L'(x)=\{\psi(x)\}$ if $x\in \{v_1,v_2,z_1\}$,
$L'(b')=\{\psi(b'),\psi(z_1)\}$ and $L'(x)=L(x)$ otherwise.  Note that $G'$ with the precolored induced path
$p_0p_1p_2v_1v_2z_1$ is not $L'$-colorable.  By the minimality of $G$, we conclude that $G'$
violates~(OBSTb).  Since $b'$ and $v_s$ are the only vertices with list of size two,
$G'$ contains OBSTb1a, OBSTb1b or OBSTb3 as a subgraph; and if $s=9$, \refclaim{cl-inter} implies
that $z'_1$ belongs to this subgraph.  However, in all the cases the resulting graph is $L$-colorable,
which is a contradiction.
\end{proof}

\claim{cl-bpnc}{We have $b'\not\in V(C)$.}
\begin{proof}
Suppose that $b'\in V(C)$.  If $b'=v_4$, then \refclaim{cl-no3no4} implies that $v_4\not\in X$,
thus~(A5) holds and $v_5\in V(T)$.  This is a contradiction, as we would choose $b=v_5$.  Therefore, $b'\neq v_4$,
and \refclaim{cl-nonearchords} implies that the $2$-chord
$v_4bb'$ splits off $T$, thus $b'=v_5$.  Since $v_3\not\in V(T)$, we have $v_4\not\in X$ and~(A5) holds by \refclaim{cl-no3no4}.
However, since $|L(v_4)|=|L(v_5)|=3$, we have $v_5\not\in X'$.  This implies that
$X'$ satisfies (A5) as well, and $z_2$ should have been chosen as $b'$.  This is a contradiction.
\end{proof}

We can assume that $b'\neq z'_1$, since we already excluded the
symmetric case that $b=z_1$ in \refclaim{cl-bz2}.  Therefore, we have $b'=z'_2$.
\claim{cl-bneq}{The vertices $b$ and $b'$ are different.}
\begin{proof}
Suppose that $b=b'$.   By \refclaim{cl-nonearchords}, we have $v_{s-3}\in\{v_4,v_5\}$.

If $v_{s-3}=v_4$, then let $V(T)=\{b,t,t'\}$, and note that $\{t,t'\}\cap \{z_1,z'_1\}=\emptyset$, by the choice of $b$ and $b'$.
Since $z_1$ and $z'_1$ have degree at least three, \refclaim{cl-inter} implies that the vertices of $T$ have no neighbors in $P$,
and that the distance between $T$ and $\{v_1,v_7\}$ is at least three.  There exists an $L$-coloring $\psi$ of the subgraph of $G$
induced by $V(P)\cup V(T)\cup\{v_1,v_2,v_3\}$ such that $|L(v_4)\setminus \{\psi(v_3),\psi(b)\}|\ge 2$.  Let $G'=G-\{v_3,v_4,v_5,b,t,t'\}$
and $L'$ the list assignment such that $L'(v_1)=\{\psi(v_1)\}$, $L'(v_2)=\{\psi(v_2)\}$, $L'(x)=L(x)\setminus \{\psi(y)\}$ if $x$ has
a neighbor $y\in V(T)$ and $L'(x)=L(x)$ otherwise.  Observe that $G'$ is not $L'$-colorable,
contains an induced precolored path $p_0p_1p_2v_1v_2$ of length four and satisfies~(I).  Since $z_1$ has degree at least three
and it is not adjacent to $p_0$, \refclaim{cl-inter} implies that $G'$ satisfies~(Q).   It follows that $G'$ contains
an image $H$ of OBSTb1 or OBSTb2.
By \refclaim{cl-inter}, we have $z_1, v_7\in V(H)$.  If $H$ is OBSTb1, then $C$ has a $3$-chord $v_2z_1xv_7$ contradicting \refclaim{cl-nonearchords}.
If $H$ is OBSTb2, then $G$ contains a path $v_2z_1xyzv_7$, where $y$ has a neighbor in $T$.  However, then $t$ or $t'$ has degree two by \refclaim{cl-inter},
which is a contradiction.

If $v_{s-3}=v_5$, then both $X$ and $X'$ satisfy~(A5).  By \refclaim{cl-nonearchords}, we have $z_1\neq z'_1$.  Since both $z_1$ and $z'_1$ have degree
at least three, \refclaim{cl-inter} implies that $b$ has no neighbor in $P$ and has distance at least three from $\{v_1,v_7\}$.  Let $\psi$
be an $L$-coloring of the subgraph of $G$ induced by $V(P)\cup V(T)\cup \{v_1,v_2,v_3\}$ such that $\psi(v_5)\not\in L(v_6)$.
Let $G'=G-\{v_3,v_4,v_5,v_6,b\}$ and let $L'$ be the list assignment such that $L'(v_1)=\{\psi(v_1)\}$, $L'(v_2)=\{\psi(v_2)\}$,
$L'(x)=L(x)\setminus \{\psi(y)\}$ if $x$ has a neighbor $y\in V(T)$ and $L'(x)=L(x)$ otherwise.  Observe that $G'$ is
not $L'$-colorable, contains a precolored induced path $p_0p_1p_2v_1v_2$ of length four and satisfies~(I) and~(Q).  By the minimality of $G$,
it follows that $G'$ contains an image $H$ of OBSTb1 or OBSTb2.
The distance between the neighbors of $b$ is at least three, thus at most one of them belongs to $H$ and has list of size two.
It follows that $H$ is OBSTb1 and $v_7\in V(H)$.  However, then $z_1$ or $z'_1$ has degree two by \refclaim{cl-inter}, which is a contradiction.
\end{proof}

Finally, let us consider the case that $b\neq b'$.  Since $T$ has two vertices that do not belong to $C$, neither $X$ nor $X'$ satisfies~(A5).
Since $v_3\not\in V(T)$, by \refclaim{cl-no3no4} we have $v_4\not\in V(T)$, and symmetrically, $v_{s-3}\not\in V(T)$.
Note that $v_4$ is adjacent to $b$, $v_{s-3}$ is adjacent to $b'$, and neither $v_4$ nor $v_{s-3}$ is incident with a triangle;
hence, we have $v_{s-3}\neq v_4$.  Let $\{t\}=V(T)\setminus\{b,b'\}$.
Consider the $3$-chord $Q=v_4bb'v_{s-3}$ and the subgraph $G_2$ split off by it.  If $T\not\subset G_2$, then \refclaim{cl-4chord}
implies that $v_4$ and $v_{s-3}$ have a common neighbor, and thus $s=9$.  If $T\subset G_2$, then we similarly conclude that
$v_4btb'v_{s-3}$ splits off OBSTb1, i.e., $s=11$ and $t$ is adjacent to $v_6$.

Let $S_1=L(v_2)\setminus (L(v_1)\setminus L(p_2))$ and $S_2=L(v_{s-1})\setminus (L(v_s)\setminus L(p_0))$.
By the minimality of $G$, we have $|S_1|=|S_2|=2$, as otherwise we can remove the edge $v_1v_2$ or $v_{s-1}v_s$.
Suppose now that there exists an $L$-coloring $\psi$ of $T$ such that for every $c_1\in S_1$ and $c_2\in S_2$,
there exists an $L$-coloring $\varphi$ of the subgraph of $G$ induced by $V(T)\cup \{v_2,v_3,\ldots, v_{s-1}\}$
such that $\varphi(v_2)=c_1$, $\varphi(v_{s-1})=c_2$ and $\varphi(x)=\psi(x)$ for $x\in V(T)$.
Let $G'=G-(V(T)\cup \{v_3,v_4,\ldots,v_{s-2}\})$ and let $L'$ be the list assignment such that $L'(x)=L(x)\setminus\{\psi(y)\}$
if $x$ has a neighbor $y$ in $V(T)$ and $L'(x)=L(x)$ otherwise.  The choice of $\psi$ implies that every $L'$-coloring
of $G'$ corresponds to an $L$-coloring of $G$, thus $G'$ is not $L'$-colorable.  The precolored path $P$
in $G'$ has length two.  Note that no vertex of $T$ is adjacent
to a vertex of $P$ and that the distance between $T$ and $\{v_1,v_s\}$ is at least three,
since otherwise \refclaim{cl-inter} would imply that $z_1$ or $z'_1$ has degree two.
Thus, $G'$ satisfies~(S3) and~(I).  Furthermore, it satisfies~(OBSTa), since $t(G)\ge B$.
Therefore, $G'$ would be a counterexample
to Theorem~\ref{thm-maingen} contradicting the minimality of $G$.

We conclude that no such $L$-coloring $\psi$ exists.  In particular, for any color $c\in L(b)$, the list $L(v_4)\setminus\{c\}$
has size two and intersects $L(v_3)$.  It follows that $L(v_3)\subseteq L(v_4)=L(b)$, and symmetrically, $L(v_{s-2})\subseteq L(v_{s-3})=L(b')$.
Similarly, we conclude that $L(v_3)=S_1$, $L(v_{s-2})=S_2$, $L(v_5)\subseteq L(v_4)$, $L(v_{s-4})\subseteq L(v_{s-3})$,
and if $s=11$, then $L(v_5),L(v_7)\subseteq L(v_6)=L(t)$.  If $L(v_3)=L(v_5)=S_1$, then choose $\psi(b)\in S_1$ arbitrarily.
Now, regardless of the values of $c_1$, $c_2$ and the rest of $\psi$, we can choose the color of $v_4$ to be the unique color in
$L(v_4)\setminus S_1$, and the $L$-coloring $\varphi$ will exist.  Therefore, $L(v_5)\neq S_1$ and $L(v_{s-4})\neq S_2$.
Similarly, if $s=11$, then $L(v_5)\neq L(v_7)$.

Let us now define an $L$-coloring $\psi$ of $T$.
Let $\{c_3\}=L(v_5)\cap S_1$.  Set
$\psi(b)$ to be the unique color in $S_1\setminus L(v_5)$.  Furthermore, if $s=11$ then let $\psi(t)=c_3$, and if $s=9$ then let $\psi(b')=c_3$.
Observe that $\psi$ (extended to the third vertex of $T$ arbitrarily) has the required property---if $c_1\neq\psi(b)$, then we can color $v_3$ by $\psi(b)$, so that two neighbors of $v_4$
have the same color.  And if $c_1=\psi(b)$, then we can color $v_3$ by $c_3$, $v_4$ by the color in $L(v_4)\setminus S_1$
and $v_5$ with $c_3$, so that $v_6$ has two neighbors with the same color.  In both cases, we can greedily extend the
coloring of $v_{s-1}$ by $c_2$ to the rest of the vertices.  This contradiction finishes the proof of
Theorem~\ref{thm-maingen}.
\end{proof}

\section{Concluding remarks}

Theorem~\ref{thm-main} follows by choosing an arbitrary vertex $p$ in the outer face of $G$ and
a color $c\in L(p)$, changing the list of $p$ to $\{c\}$ and applying Theorem~\ref{thm-maingen} with the path consisting only of $p$.

The proof of Theorem~\ref{thm-maingen} follows the lines of Thomassen's original proof~\cite{thomassen1995-34}.
However, a basically unavoidable part of the proof---the need to handle $2$-chords, so that we can color
and remove a $5$-face in \refclaim{cl-neartb}---forces us to deal with a large number of counterexamples to the claim
``every precoloring of a path of length two can be extended.'' Especially painful is the obstruction OBSTx1,
which even applies to a path of length one.  One could ask whether we could not avoid this by forbidding vertices
with list of size two in triangles completely.  This cuts down the number of obstructions significantly,
and indeed, this was our original aim.  However, at the final stage of the proof, we would only end up knowing
that there is a triangle whose distance is at most two from a vertex on each side of the precolored path $P$.
This is a quite small amount of structure to work with, making the arising case analysis extremely difficult.
Additionally, one runs into trouble if these two vertices are in fact identical, which would essentially force
extending Corollary~\ref{cor-sepcycles} to precolored cycles of length at most $10$.  The number of obstacles 
for such cycles then becomes rather large, and it is not quite clear how such an extension
of Corollary~\ref{cor-sepcycles} could be proved.

Another point where one could hope to save on obstructions is by only considering the precoloring of a path of
length at most $4$ in case that $(\le\!4)$-cycles are far enough from it.  However, there are many places throughout
the proof where it is useful to extend the coloring of a path of length two to a coloring of a path of length five,
and it is unclear how to handle these situations using only paths of length four.

Consequently, we end up with a nontrivial number of obstructions, and the proof becomes rather technical.
Despite the length of this paper, still a large amount of work is hidden in the need to carefully verify all the claims;
in particular, we in general do not give detailed proofs of colorability of the described graphs.
We feel that doubling the length of the paper by spelling out all these
technical details would not make the exposition any clearer or more believable.
Similar remarks apply to other results proved using this technique (even the original paper of Thomassen~\cite{dkt},
although written quite shortly, becomes rather long when all details are worked out).
Given the rather repetitive nature of the arguments, one wonders whether
it would not be possible to apply computer to obtain such proofs.   Let us however note that many of the reductions
appearing in our proof are quite tricky and it is not immediately obvious how they could be obtained mechanically.

On the positive side, Theorem~\ref{thm-maingen} is somewhat interesting even for graphs of girth five, since
it describes which precolorings of a path of length at most five can be extended.  This might be useful as a technical
tool in further study of $4$-critical graphs of girth five.  Similarly, Theorem~\ref{thm-maingen} and Corollary~\ref{cor-sepcycles}
give interesting information regarding graphs with exactly one cycle of length at most four.

Compared with the solution to Havel's problem~\cite{dkt}, our proof is rather elementary, not using any deeper results.
Would it be possible to apply the techniques of \cite{dkt} instead?  Possibly, but it would require developing a new proof
of $3$-choosability of planar graphs of girth $5$ based on reducible configurations and discharging.  While our initial
inquiry in that direction was somewhat encouraging, it seems inevitable that the set of reducible configurations needed
would be rather large (possibly hundreds as opposed to about $10$ needed in \cite{dkt} for the case of $3$-coloring), so
the proof would become of somewhat dubious value.

Finally, let us remark that we could require a much weaker assumption on the distance between $4$-cycles, since in
most of the arguments only triangles cause problems.  However, for obvious reasons we did not want to complicate the
proof any more.

\section*{Acknowledgements}
I would like to thank an anonymous referee who actually managed to read through the paper
up to this point, both for his or her patience and for useful remarks on the presentation.

\bibliographystyle{siam}
\bibliography{far34b}

\end{document}